\def\qed{\hfill{\raggedleft{\hbox{$\Box$}}} \smallskip}
\def\R{\mathbb{R}}
\def\A{\mathcal{A}}
\def\E{\mathbb{E}}
\def\P{\mathbb{P}}
\newcommand{\vect}[1]{\overrightarrow{#1}}
\theoremstyle{plain} \newtheorem{lem}{Lemma}
\theoremstyle{plain} \newtheorem{prop}[lem]{Proposition}
\theoremstyle{plain} \newtheorem{thm}[lem]{Theorem}
\theoremstyle{plain} \newtheorem{cor}[lem]{Corollary}
\theoremstyle{plain} 
\theoremstyle{plain} 
\theoremstyle{definition} \newtheorem{defn}[lem]{Definition}
\theoremstyle{definition}
\theoremstyle{definition} 
\theoremstyle{definition} 
\theoremstyle{definition}\newtheorem{ex}[lem]{Example}
\newlength\savedwidth
\newcommand\term[1]{\emph{#1}} 
\def\1{\mathbf{1}}
\def\Z{\mathbb Z}
\def\E{\mathbb E}
\author{Francois Baccelli}
\address[Francois Baccelli]{Department of Mathematics, University of Texas at Austin, Austin TX 78712, USA}
\email{baccelli@math.utexas.edu} 
\author{Ngoc Mai Tran}
\address[Ngoc Mai Tran]{Department of Mathematics, University of Texas at Austin, Austin TX 78712, USA; Department of Mathematics, University of Bonn, Bonn 53113, Germany; and the Hausdorff Center for Mathematics, Bonn 53113, Germany}
\email{ntran@math.utexas.edu}
\def\G{\mathcal{G}}
\title{Iterated Gilbert Mosaics \\ and Poisson Tropical Plane Curves}
\begin{document}
\begin{abstract}
We propose an iterated version of the Gilbert model, which results in a sequence of random mosaics of the plane. We prove that under appropriate scaling, this sequence of mosaics converges to that obtained by a classical Poisson line process with explicit cylindrical measure. Our model arises from considerations on tropical plane curves, which are zeros of random tropical polynomials in two variables. In particular, the iterated Gilbert model convergence allows one to derive a scaling limit for Poisson tropical plane curves. Our work raises a number of open questions at the intersection of stochastic and tropical geometry.
\end{abstract}
\subjclass[2000]{Primary 60D05; Secondary 14T05}
\maketitle

\section{Introduction}
The Gilbert model is a random mosaic of the plane obtained by letting line segments (cracks) grow from a homogeneous Poisson point process at constant speed, with the rule that a crack stops growing the instance it hits another crack. In the computer vision literature, this is known as the {\em motorcycle graph}, in reference to the 1982 Disney movie Tron \cite{eppstein2008motorcycle}. Efficient computations of the motorcycle graph starting from a fixed set of points and directions are important for generating quadrilateral meshes in computer graphics. A closely related model is the lilypond model, where the entire growth, rather than just the directional growth, is blocked upon collision with another object \cite{haggstrom1996nearest}. The lilypond has attracted much attention in stochastic geometry and percolation theory \cite{cotar2004note,heveling2006existence,last2013percolation,daley2005descending,cotar2009percolating,daleyf2004further}.

Geometric functionals of the Gilbert model are notoriously difficult to obtain exactly. The expected length of a typical line segment, for instance, is only known in a few special cases \cite{burridge2013full}.
Another approach is to look for fluctuations in a large window of functionals of the summation kind.
As the window size increases, one may expect law of large numbers and central limit theorems to hold. Schreiber and Soja \cite{schreiber2010limit} proved such results for a large class of geometric functionals using stabilization theory. This is, to our knowledge, one of the few scaling limit results for the Gilbert model. 

In this work, we iterate the Gilbert construction to obtain a family of random mosaics $\mathcal{G}^k$ for $k \in \mathbb{N}$, where the classical Gilbert mosaic is the case $k = 1$. Roughly speaking, we allow a line to collide with $k$ other lines before it stops growing. As $k$ increases, the intensity of intersections also increases, leading to shorter line segments and smaller facets. Our main result, Theorem~\ref{thm:main}, states that appropriately scaled, this sequence of random mosaics converges in the vague topology to a classical Poisson line process with explicit law. 

The second part of our paper presents an application of the iterated Gilbert model to tropical geometry. We use Theorem \ref{thm:main} to obtain a scaling limit for random tropical plane curves, as well as the asymptotic growth rates of various functionals.
Tropical geometry is the study of tropical varieties,
which are are limits of classical algebraic varieties
under the logarithm map \cite{maclagan2015introduction}.
They are also zeros of polynomials in the tropical (min-plus) semi-ring.
As sets, they are piecewise-linear, formed by intersections of
affine hyperplanes. Thus, they form natural intermediates between
classical algebraic varieties and affine structures,
and provide an attractive extensions to classical affine models
studied in stochastic geometry.

Our setting here can be seen as a continuation
of the work \cite{baccelli2016zeros}, where we studied 
the asymptotic number of zeros
for one random tropical polynomial in one variable, of degree $n$,
and with i.i.d. coefficients, when $n$ tends to infinity.
The iterated Gilbert model presented here is motivated by the analysis
of the {\em common zeros of an infinite and scale
invariant random system of tropical polynomials
in two variables}, obtained from an i.i.d. sequence of polynomials
and a Poisson point process. 

The paper is organized as follows. In Section \ref{sec:background}
we define the iterated Gilbert mosaic and prove some of its basic properties.
In Section \ref{sec:main.proof}, we state and prove the main result,
Theorem~\ref{thm:main}, and its generalization, Theorem \ref{thm:main2}.
The heart of the proof is an induction argument, broken up into a series
of lemmas. In Section \ref{sec:tropical}, we gather basic facts from
tropical geometry, and motivate the definition
of the translation invariant process of tropical plane curves associated
with the polynomial ensemble alluded to above.
We apply Theorem \ref{thm:main2} to obtain a scaling limit for
the tropical plane curves process in Theorem \ref{thm:main.tropical}.
We then focus on the case of the tropical line process,
giving various statistics such as the relative densities of
tropical polytopes of various types.
Section \ref{sec:discussions} concludes with open problems
of interest to both stochastic and tropical algebraic geometers.

\subsection*{Notation}
For a subset $W \subset \R^2$, let $|W|$ denote its area under Lebesgue measure. For a finite set $Q$, let $|Q|$ denote its cardinality. Write $\vect{\phi}$ for the unit vector in direction $\phi$. Let $\mathbf{1}$ be the all-one vector.

\section{The iterated Gilbert mosaic}\label{sec:background}

\begin{defn}[Iterated Gilbert model]\label{defn:gk}
Let $\mathcal{P}$ be a compound Poisson point process $\R^2$ with intensity $\lambda$ and multiplicity measure $\mathcal{M}$ supported on a subset of $\{1,2,\ldots,M\}$ for $3 \leq M <~\infty$. Let $A$ be a set of $M$ angles in $[0,2\pi)$. Write $\A = (\A^1, \A^2, \ldots, \A^M)$, where $\A^m$ is a distribution on the product set $A^m$ such that no two coordinates are equal. For $k \geq 1$,
 the \emph{$k$-th order Gilbert model} $\mathcal{G}^k(\mathcal{P}, \A)$ is the random closed set (RACS) resulting from the following construction (the fact that $\G^{k}({\mathcal P},\A)$ is a RACS is proven below). At time $t = 0$, independently at each site $p \in \mathcal{P}$ with multiplicity $m(p) = m \in \{1,2,\ldots, M\}$, pick $m$ directions
$\phi_1, \ldots, \phi_m$ jointly according to the distribution $\A^m$. Put $m$ motorcycles at $p$, one for each travel direction. As time $t$ increases, each motorcycle then travels at velocity $1$ in its prescribed direction, leaving behind a poisonous line. Each motorcycle initially has $k$ lives. At time $t > 0$, if a motorcycle $b$ touches the line of another motorcycle $b'$, it loses one life. The instance the motorcycle has zero lives, it vanishes. Let $\G^{k,t}(\mathcal P,\A)$ denote the union of the lines that have appeared up until time $t$. If almost surely, for each compact window $W \subset \R^2$, $\G^{k,t}(\mathcal P,\A) \cap W$ is equal to a fixed set after finite time, define 
$$ \G^k(\mathcal P,\A) := \lim_{t\to\infty}\G^{k,t}(\mathcal P,\A), $$
in the sense that for each compact $W \subset \R^2$, 
$$ \G^k(\mathcal P,\A) \cap W = \lim_{t\to\infty}\G^{k,t}(\mathcal P,\A) \cap W. $$
\end{defn}

For $m \in \{1,2,\ldots,M\}$, let $\pi_m$ be the probability that a site $p$ in $\mathcal{P}$ has multiplicity $m(p) = m$. One can view $\mathcal{P}$ as the superposition of $2^M-1$ independent Poisson point processes $\{\mathcal{P}_Q, Q \subseteq A\}$  on $\R^2$, where $\mathcal{P}_Q$ is the set of sites with angles $Q$, which is a Poisson point process with intensity
$$\mu _Q := \lambda \pi_{|Q|}\mathcal{A}^{|Q|}(Q).$$

We shall refer to a motorcycle as a marked point $\bar{b} := (b,\phi)$, consisting of its origin $b \in \R^2$ and travel direction $\phi \in [0,2\pi)$. Write $\overline{\mathcal{P}}$ for the set of marked points generated at the beginning of time. If $p \in \R^2$ is a point on the path of $\bar{b}$, define the \term{age} of $\bar{b}$ at $p$ to be the time at which it reaches $p$. When $\bar{b}$ loses one life due to another motorcycle $\bar{b}'$ at location $p \in \R^2$, we say that \term{$\bar{b}'$ kills $\bar{b}$ at $p$}, or that \emph{$\bar{b}'$ is a killer of $\bar{b}$ at $p$}. The location where $\bar{b}$ vanishes is called its \emph{grave}. The classical Gilbert corresponds to $k = 1$. When no confusion can arise, we write $\G^k$ for $\G^k(\mathcal P,\A)$.

\begin{ex}[Non-monotonicity of the iterated Gilbert sets]\label{ex:iterated.gilbert}
It is important to note that the sequence of random closed sets $\{\G^k(\mathbb{P}, \mathcal{A}): k \geq 1\}$ may \emph{not } be a.s. monotone increasing with $k$, as shown in the example of Figure \ref{fig1}.
\begin{figure}[h]
\includegraphics[width=\textwidth]{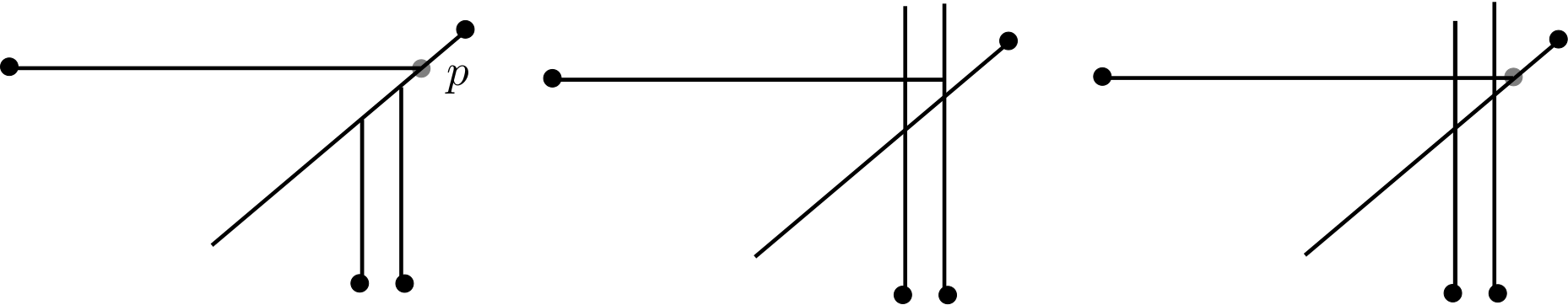}
\caption{Black points are points of $\mathcal{P}$. From left to right: an iterated Gilbert process of order $k = 1,2$ and $3$, respectively. Note that $p \in \G^1$ and $p \in \G^3$ but $p \notin \G^2$.}
\label{fig1}
\end{figure}
\end{ex}

Below we will always make the no-parallel-line assumption: there exist at least two angles $\phi,\psi \in A$, $\phi \neq -\psi$, such that with positive probability, there are motorcycles that travel in this direction. This assumption is meant to rule out the trivial case where all motorcycles travel in parallel to each other, and then clearly their paths are of infinite lengths.

\begin{prop}\label{lem:racs}
Under the no-parallel-line assumption, for all $k \geq 1$, $\mathcal{G}^k(\mathcal{P}, \mathcal{A})$ is a well-defined random closed set.

\end{prop}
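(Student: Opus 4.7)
The plan is to check well-definedness in two stages: first, that $\mathcal{G}^{k,t} \cap W$ is a measurable functional of the marked Poisson point process for every compact $W \subset \R^2$ and every finite $t$; second, that this intersection almost surely stabilizes after a finite random time, so that the infinite-time limit exists as a random closed set.

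For the local construction, fix a compact $W \subset \R^2$ and a horizon $T > 0$. Since motorcycles travel at unit speed, only marked points lying in the compact Minkowski sum $W \oplus B(0,T)$ can affect $\mathcal{G}^{k,t} \cap W$ for $t \le T$, and by local finiteness of $\mathcal{P}$ there are almost surely only finitely many such points. Their infinite-trajectory lines produce finitely many pairwise intersection times, and these times are almost surely distinct because the joint law on directions at any site has no coordinate collisions and the intensity $\lambda$ is absolutely continuous with respect to Lebesgue measure on $\R^2$. Processing the candidate collisions chronologically and decrementing lives as prescribed yields $\mathcal{G}^{k,t} \cap W$, for every $t \le T$, as a deterministic measurable function of the finitely many marked points in $W \oplus B(0,T)$.

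Stabilization on $W$ then reduces to showing that every motorcycle has an almost surely finite lifetime. Granted this, each trajectory has an a.s.\ finite length $L(\bar b)$; a motorcycle contributing a segment to $W$ must originate within distance $L(\bar b) + \mathrm{diam}(W)$ of $W$, so a Campbell-type moment argument combined with the local finiteness of $\mathcal{P}$ shows that only finitely many motorcycles contribute to $W$. This yields a random $T^\star(\omega, W) < \infty$ after which $\mathcal{G}^{k,t} \cap W$ is constant, and the hitting-functional characterization of random closed sets promotes the limit to a bona fide RACS.

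The core technical step is therefore the a.s.\ finiteness of each motorcycle's lifetime. Fix a motorcycle $\bar b = (b, \phi)$. Using the no-parallel-line assumption, choose a second direction $\phi' \in A$ which is neither parallel nor anti-parallel to $\phi$ and which occurs with positive rate in $\mathcal{P}$. The unobstructed crossings of direction-$\phi'$ trajectories with the ray $b + \R_+ \vect{\phi}$ form a point process of positive linear density along the ray, so if a bounded-below proportion of these candidate killers actually reach the ray before being killed themselves, then $\bar b$ loses all $k$ lives in finite expected time. The main obstacle is to control the cross-killing among the candidate killers; I would handle this by stochastic domination, restricting attention to a sparse geometrically separated subcollection whose trajectories cannot collide pairwise before they reach $\bar b$'s ray, so that each such candidate reaches the ray with probability bounded below independently of the others. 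A Borel--Cantelli argument then produces, almost surely, the required $k$ successful kills, establishing finite lifetimes and, combined with the previous two paragraphs, the proposition.
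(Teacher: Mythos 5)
There is a genuine gap, and it sits exactly at the step you yourself flag as the core one. Restricting to a sparse, geometrically separated subcollection of candidate killers only rules out collisions \emph{among the candidates}; a candidate $(a,\phi')$ can just as well be stopped by the trail of any other motorcycle in the plane before it reaches the ray $b+\R_{+}\vect{\phi}$. Consequently your claim that each chosen candidate ``reaches the ray with probability bounded below independently of the others'' is not established: the success events depend on the whole environment, so neither the uniform lower bound nor the independence needed for the Borel--Cantelli step follows from the sparsification. The paper (adapting Schreiber and Soja) closes precisely this hole by making the kill \emph{deterministic given a local configuration}: it scans disjoint unit balls along the ray and waits for a ball whose only site lies in a small region $D$ so close to the ray, with a suitable angle, that this motorcycle lays its trail across the path of $\bar b$ before any motorcycle originating \emph{outside} the ball could possibly interfere. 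The kill is then guaranteed regardless of the outside configuration, each ball supplies such a configuration with a fixed probability $\epsilon>0$, independence across disjoint balls is automatic from the Poisson structure, and waiting for $k$ such balls handles general $k$, yielding a stabilization radius with geometric tail. Some variant of this ``guaranteed-kill'' local event (candidate origin within distance $\delta$ of the crossing point, an otherwise empty $O(\delta)$-neighborhood) is what your sketch is missing.

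A secondary, related issue: in your stabilization paragraph you pass from ``each lifetime $L(\bar b)$ is a.s.\ finite'' (or finite in mean) to ``only finitely many motorcycles contribute to $W$'' by a Campbell-type argument. That computation needs a quantitative tail: the expected number of motorcycles whose trail meets $W$ is of order $\int_0^\infty (r+\mathrm{diam}\,W)\,\P(L>r)\,dr$, so you need roughly $\E[L^2]<\infty$, not mere a.s.\ finiteness. The paper's construction delivers an exponential tail for the stabilizing radius, which makes this step immediate; as written, your lifetime argument does not supply the required moment or tail bound.
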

\begin{proof}
For a motorcycle $\bar{a} = (a,\phi)$, let $L^k(\bar{a})$ denote the length of its path from its origin to its grave in $\G^k(\mathcal{P}, \mathcal{A})$. Let $\bar{\mathcal{P}}$ denote the marked point process consisting of ground points $\mathcal{P}$ and independent marks defined by $\A$. The goal is to show that $L^k$ is an exponentially stabilizing
functional of $\bar{\mathcal{P}}$. That is, for each $\bar{a}$,
there exists an a.s. finite random variable 
$R(\bar{a}, \bar{\mathcal P}) \in \R_{\geq 0}$ such that 
$L^k(\bar{a})$ is a finite random variable
only depending on the points
of $\bar{\mathcal P}$ which are inside a ball centered at $a$ and with radius $R(\bar{a}, \bar{\mathcal P})$.
In addition, the tail of the random variable
$R(\bar{a},\bar{\mathcal P})$ is exponential.
This in turn implies that the union of the edges
in $\mathcal{G}^k$ is a well-defined random closed set in $\R^2$
in view of the fact that
the support of $\mathcal{P}$ has no finite accumulation points.

Schreiber and Soja \cite{schreiber2010limit} proved this statement for the classical Gilbert 
process \cite[Theorem 4]{schreiber2010limit}.
Their proof only requires a small modification to adapt to this general setting.
Indeed, consider a marked point $\bar{a}$. The crux of the proof for $k=1$ is to show that there exists a
positive probability $\epsilon > 0$ such that the motorcycle 
has its grave in the unit ball $B(a,1)$,
or equivalently $\P(L^1(\bar{a}) > 1) \leq (1-\epsilon)$.
Inside $B(a,1)$, choose a region $D$ such that, regardless of the configuration of points outside $B(a,1)$, 
\begin{itemize}
\item for each $b \in D$, there exists a positive probability
$\epsilon_1 > 0$ such that the motorcycle $\bar{b} = (b,\psi)$ whose random direction $\psi$ is sampled according to $\A$ kills $a$ in the event $E$ that disregarding multiplicity, $a,b$ are the only two points of $\mathcal{P}$ in $B(a,1)$, and
\item $D$ has positive area, so with probability $\epsilon_2 > 0$, 
event $E$ holds.
\end{itemize}
Since the set of angles $A$ is finite, the angles at different points are chosen independently, and by the assumption which rules out parallel lines, the desired region $D$ exists. 
Then, with probability $\epsilon = \epsilon_1\epsilon_2 > 0$, the motorcycle $\bar{a}$ is killed by some motorcycle $\bar{b}$ with $b \in D$ and appropriate travel angle. Consider now non-overlapping and contiguous balls of radius 1
along the half line of apex $a$ and direction $\phi$.
Define $N$ to be the first integer such that the ball $B(x+2N,1)$
contains a single marked point $\bar{b}$ in $D+2N$ with an appropriate angle such that $\bar{b}$ would kill $\bar{a}$ if latter is still alive
by the time they are supposed to meet. Set our stabilizing radius 
$R(\bar{x},\bar{\mathcal{P}}) = 2N + 1$. By the independence property of $\mathcal{P}$, one has $\P(R > n) \leq (1-\epsilon)^n$, and this supplies the finiteness and the exponential decaying behavior needed.

Now consider the case $k > 1$. Let $N$ now denote the smallest integer
such that there are $k$ ball with the appropriate single point property in the sequence of balls $B(x+2N,1)$.
Clearly $2N+1$ is a stabilizing radius with an exponential tail.
\end{proof}

We have sacrificed generality for readability in Definition \ref{defn:gk}. Indeed, one can do away with several of our initial assumptions on $\mathcal{P}, \A$ and $\mathcal{M}$, and Proposition \ref{lem:racs} still holds by the same proof. In Section \ref{subsec:expanded}, we explore a non-trivial extension where at time $t = 0$, there are obstacles in $\R^2$.  

\subsection{Iterated Gilbert Mosaic}

In this paper, we shall focus on models where $\mathcal{G}^k(\mathcal{P}, \mathcal{A})$ is a random mosaic. This is a countable system of compact, convex polygons that covers $\R^2$, with mutually no common interior points.
Such a random mosaic can be identified with a tuple of point processes consisting of the centroids of its facets, edges and vertices. Applications of Campbell's formula and Euler's formula allow one to do computations on the statistics of random mosaics, see \cite[\S 10]{schneider2008stochastic}. 

\begin{prop}\label{prop:gk.mosaic}
Consider an iterated Gilbert model $\G^k = \mathcal{G}^k({\mathcal P}, \A)$ that satisfy the no-parallel-line assumption, and furthermore,
\begin{itemize}
  \item (no isolated sites): $\A^1$ has total measure 0;
  \item (convex sites): for a supported value $2 \leq m \leq M$, the joint angle distribution $\A^m$ is such that the absolute value of an angle formed between adjacent lines is less than or equal to $\pi$.  
\end{itemize}
Then for $k \geq 1$, $\G^k$ is a random mosaic of the plane. 
\end{prop}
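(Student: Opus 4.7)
The plan is to verify three properties almost surely: (i) $\G^k$ is, locally, a finite union of closed line segments; (ii) each connected component (facet) of $\R^2 \setminus \G^k$ is convex; and (iii) each facet is bounded. Combined with the closedness already established in Proposition~\ref{lem:racs}, these properties give the mosaic property.

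For (i), I would reuse the exponential stabilization from the proof of Proposition~\ref{lem:racs}: in any bounded window $W$, only finitely many motorcycles have tracks meeting $W$ almost surely, so $\G^k \cap W$ is a finite union of closed line segments, and hence $\R^2 \setminus \G^k$ is open and decomposes into countably many facets. For (ii), I would perform a local angular analysis at each boundary point of a facet. At an ordinary point of a single trajectory there is nothing to show; at a transverse crossing of two distinct motorcycle tracks, four sectors of angles strictly less than $\pi$ are formed; at a site $p \in \mathcal{P}$, the no-isolated-sites assumption forces multiplicity $m \geq 2$, and the convex-sites assumption forces the $m$ angular sectors between consecutive outgoing rays to have angles at most $\pi$. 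Since each facet's boundary is a simple polygon with all interior angles at most $\pi$, any bounded facet is convex.

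For (iii), the idea is to argue by contradiction: if some facet $F$ were unbounded, then by convexity $F$ would contain a ray $r$ in some direction $\phi$. Using the no-parallel-line assumption, pick a direction $\psi$ with positive mass under $\A$ and $\psi \neq \pm \phi$. In a strip around $r$ of fixed positive width, the Poisson process supplies infinitely many sites whose motorcycles travel in direction $\psi$; by exponential stabilization, each such motorcycle has a positive probability of surviving long enough to cross $r$. Restricting to a subcollection of well-separated sites whose stabilization balls are disjoint, the crossing events become independent, and a Borel--Cantelli argument yields infinitely many crossings of $r$ almost surely, contradicting $r \subset F$.

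The main obstacle is the decoupling in step (iii): the survival of a given motorcycle depends on the full configuration of $\bar{\mathcal{P}}$ and hence on the other motorcycles, so independence across sites has to be reintroduced carefully via the stabilization radii. A cleaner alternative would be to exploit translation invariance and ergodicity of $\G^k$ inherited from the Poisson input: if unbounded facets occurred with positive probability, the ergodic theorem would produce a positive density of them, contradicting the finiteness of the intensity of $\G^k$-facets per unit area implied by exponential stabilization and the standard mass-transport principle.
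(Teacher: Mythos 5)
Your steps (i) and (ii) coincide with the paper's argument: local finiteness of segments follows from the stabilization in Proposition~\ref{lem:racs}, and convexity follows from the local angle analysis (at a site the (no isolated sites) and (convex sites) assumptions give interior angles $\leq\pi$; at a collision at least one motorcycle continues, so the point lies in the relative interior of a line and the angle is again $\leq\pi$ -- note this also covers T-junctions where one motorcycle dies, so your ``four sectors'' phrasing should be relaxed, and note that the same local argument gives convexity of \emph{all} facets, which you need in (iii)). The genuine gap is in step (iii). Borel--Cantelli along a strip shows that any \emph{fixed} ray is a.s.\ crossed by $\G^k$, but the ray you must rule out is random: its apex and direction are functions of the whole configuration, and there are uncountably many candidate rays, so the null exceptional sets cannot simply be unioned away. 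To make your route rigorous you would need a quantitative, uniform statement -- e.g.\ a discretized net of apexes and directions together with a bound of the form $\P(\mbox{some ray from }[0,1]^2\mbox{ avoids }\G^k\mbox{ up to distance }D)\to 0$, with forced blocking segments spanning a tube of fixed width around each net ray (exploiting the finiteness of $A$ for uniform transversality), in addition to the decoupling of the crossing events that you do flag. Your ``cleaner alternative'' does not close this gap either: an unbounded facet has no centroid and contributes nothing to a facet intensity, so positive probability of unbounded facets does not contradict finiteness of the facet intensity; worse, the facet-intensity computation of Proposition~\ref{prop:gk-finite} (mass transport plus Euler's formula) presupposes that $\G^k$ is already a mosaic with compact cells, so invoking it here would be circular.

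For comparison, the paper closes compactness with a short deterministic argument that you could substitute: since a facet is convex and each of its non-flat boundary edges is parallel to one of the $M$ directions in $A$, a convex facet has at most two edges per direction, hence at most $2M$ edges; by general position each such edge lies on the path of a single motorcycle, whose length $L^k$ is a.s.\ finite by Proposition~\ref{lem:racs}; hence the boundary of the facet is bounded and the facet (not being all of $\R^2$) is compact. This uses only the finiteness of $A$, the already-proved convexity, and Proposition~\ref{lem:racs}, and avoids the random-ray, decoupling and ergodicity issues entirely.
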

\begin{proof}
By Proposition \ref{lem:racs}, $\G^k$ is a well-defined random closed set of $\R^2$. We say that two points of $\R^2\setminus \G^k$ are connected if there is a finite continuous path between them that does not intersect $\G^k$.
This is an equivalence relation on $\R^2\setminus \G^k$. The
equivalence classes are open sets which we will call cells.
We need to show that cells are a.s. relatively compact and convex,
and that their closures cover $\R^2$. As a random closed set, $\G^k$ consists of a.s. finite line segments. As there are no accumulation points in $\mathcal{P}$, only finitely many segments intersect any given compact set. So the closures of the cells of $\G^k$ cover $\R^2$, and furthermore, locally at each vertex, the cell is a polygon. 
 We now prove convexity. Suppose for contradiction that there exists a cell of $\mathcal{G}^k$ that is not convex. As it is locally a polygon, it has a vertex with interior angle greater than $\pi$. We claim that a.s. no such vertices exist in $\mathcal{G}^k$. Indeed, a point of $\G^k$ is a vertex of some cell if and only if it is a point of $\mathcal{P}$, or it is a location where some motorcycle hits the line of another. In the former case, by the (no isolated sites) and (convex sites) assumptions, the interior angle is at most $\pi$. In the later case, since $\mathcal{P}$ is in general position, at least one motorcycle must continue after the collision, thus the point lies in the relative interior of at least one of the two lines.
So the interior angle is also at most $\pi$. This proves the claim. Therefore, all facets of $\mathcal{G}^k$ are a.s. convex. 
Finally, we prove compactness. Let $F$ be a facet of $\G^k$. 
Let $F'$ be the polygon obtained by removing all vertices of $F$ with flat interior angles. The edges of $F'$ must be parallel to one of the angles in $A$. Since $F'$ is a convex polygon, it can contain at most two edges with the same angle. But there are $M$ different angles, thus $F'$ has at most $2M$ edges.
Since the points of $\mathcal P$ are in general position, each edge of $F'$ is a.s. generated by one motorcycle. Thus, the length of an edge in $F'$ is at most the distance that this motorcycle travels in $\G^k$ before dying. The later is a.s. finite by Proposition \ref{lem:racs}. So $F'$ is a.s. compact. Thus, $F$ is a.s. compact. 
\end{proof}

A mosaic is said to be \term{face-to-face} if the facets form a cell complex, that is, the boundaries of facets have mutually no common interior points.
Our iterated Gilbert model above is not face-to-face: an edge may terminate at an interior point of another edge. This issue is simple to resolve: one simply counts such interior points as vertices of the new edge, and define an edge as the line segment between two vertices, as before. This allows vertices with flat (180 degree) angles, and consecutive edges which are parallel to each other. This operation is called a face-to-face refinement. 
We can now define the central object of our study, the iterated Gilbert mosaic.
\begin{defn}
An \emph{iterated Gilbert mosaic} is the face-to-face refinement of an iterated Gilbert model $\G^k = \mathcal{G}^k({\mathcal P}, \A)$ that satisfies the assumptions of Proposition \ref{prop:gk.mosaic}.
\end{defn}

\begin{defn}\label{defn:vertex.type}
For $k \geq 1$, say that a vertex $v$ of $\mathcal{G}^k$ is a \emph{site} if $v \in \mathcal{P}$, and an \emph{intersection} if $v$ is the intersection of a line with another line. 
\end{defn}

\begin{prop}
\label{prop:gk-finite}
Let $\G^k = \mathcal{G}^k({\mathcal P}, \A)$ be an iterated Gilbert mosaic. Let $\E \mathcal{M}$ denote the mean multiplicity at a point in $\P$. For all $k \geq 1$, let $\lambda^k_0,\lambda^k_1,\lambda^k_2$ denote the (possibly infinite) intensities of the vertex, edge and facet processes of $\G^k$. Then
\begin{align*}
\lambda^k_0 &= (1 + k)(\E\mathcal{M})\lambda, \\
\lambda^k_1 &= \frac{(\E\mathcal{M})+4k-1}{2}(\E\mathcal{M})\lambda, \\
\lambda^k_2 &= \frac{(\E\mathcal{M})+2k-3}{2}(\E\mathcal{M})\lambda.
\end{align*}
In particular, $\G^k$ has all its facet sub-processes with finite intensity.
\end{prop}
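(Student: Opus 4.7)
The plan is to count three elementary events associated with the mosaic --- motorcycle births, motorcycle deaths (life-losses) and pass-throughs --- and to extract the three intensities $\lambda^k_i$ from them using Palm calculus and Euler's relation for stationary planar mosaics. First, because $\mathcal{P}$ is compound Poisson with intensity $\lambda$ and a site of multiplicity $m$ spawns $m$ motorcycles, the marked process of motorcycles has intensity $(\E\mathcal{M})\lambda$ by Campbell's formula. Proposition~\ref{lem:racs} guarantees that, almost surely, every motorcycle loses all $k$ of its lives in finite time, so each motorcycle contributes exactly $k$ intersection events. Since each intersection has a unique ``victim,'' the intensity of intersection vertices equals $k(\E\mathcal{M})\lambda$.

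Adding the birth contribution $(\E\mathcal{M})\lambda$ from motorcycle starts (counted with the multiplicity convention implicit in Definition~\ref{defn:vertex.type}, under which a site of multiplicity $m$ contributes $m$ to the start-event intensity) gives $\lambda^k_0 = (1+k)(\E\mathcal{M})\lambda$. For the edge intensity I would use the handshake identity $2\lambda^k_1 = \sum_v \E\deg(v)$ together with Palm calculus: split intersection events into ``grave'' events, at which the victim expires (conditional probability $1/k$, local degree $3$), and ``pass-through'' events, at which the victim survives (conditional probability $(k-1)/k$, local degree $4$); combine the resulting degree-weighted Palm intensities with the contribution $m$ from a typical site of multiplicity $m$; and then simplify. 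This should yield $\lambda^k_1 = \frac{\E\mathcal{M}+4k-1}{2}(\E\mathcal{M})\lambda$.

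The facet intensity is then immediate from Euler's relation for stationary planar mosaics, $\lambda^k_0 - \lambda^k_1 + \lambda^k_2 = 0$ (see \cite[\S 10]{schneider2008stochastic}); subtracting $\lambda^k_0$ from $\lambda^k_1$ gives the asserted formula for $\lambda^k_2$. Finiteness of all three intensities is inherited from the exponential-stabilization radius of Proposition~\ref{lem:racs}, which guarantees a finite mean number of motorcycle deaths in any bounded window.

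The hard part will be the Palm bookkeeping for $\lambda^k_1$: one must distinguish grave from pass-through intersections, correctly accumulate the compound-Poisson multiplicities at sites, and account for the extra degree contributions produced by the face-to-face refinement along each motorcycle trajectory. Once this step is in place, the values of $\lambda^k_0$ and $\lambda^k_2$ follow by straightforward arithmetic.
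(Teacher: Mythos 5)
Your proposal is correct and follows essentially the same route as the paper: count site vertices together with life-loss intersections (one grave of local degree $3$ and $k-1$ pass-throughs of local degree $4$ per motorcycle) to get $\lambda_0^k$, obtain $\lambda_1^k$ from the degree-sum identity $2\lambda_1^k=\sum_v \E \deg(v)$ (the paper phrases this same handshake/Palm step as a mass-transport argument on the bipartite graph joining edge centroids to vertices), and deduce $\lambda_2^k$ from the planar Euler relation $\lambda_2^k=\lambda_1^k-\lambda_0^k$. The differences are purely presentational, including the multiplicity bookkeeping at sites, which you adopt exactly as the paper does.
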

\begin{proof}
First consider $k = 1$. Vertices of $\G^1$ are either sites or intersections. The intensity of sites is $\lambda (\E\mathcal{M})$. Each intersection corresponds to precisely one death event of a motorcycle. Since each motorcycle dies exactly once, the intensity of intersections is also $\lambda (\E\mathcal{M})$. Thus $\lambda^1_0 = 2(\E\mathcal{M})\lambda$.
Now consider the edge process of $\G^1$. For this, we use a mass transport argument. Construct a directed graph $G$ as follows: the vertices of this graph are the vertices of $\G^1$ and the centroids of the edges of $\G^1$. From each edge centroid, put a directed edge to each of the two vertices of this edge. Note that $G$ is a bipartite graph, from the set of edges of $\G^1$ to the set of vertices of $\G^1$. As each edge of $\G^1$ generates precisely two directed edges, the mean out-degree $\delta_{out}$ of $G$ is
$$ \delta_{out} = 2\lambda_1^1. $$
Now consider the mean in-degree of $G$. Each site in $\G^1$ contributes a mean in-degree of $(\E\mathcal{M})$. Each intersection contributes an in-degree of 3, by general positioning of the points in ${\mathcal P}$. Therefore, the mean in-degree $\delta_{in}$ of the graph $G$ is
$$ \delta_{in} = (\E\mathcal{M})(\E\mathcal{M})\lambda + 3(\E\mathcal{M})\lambda = (3+(\E\mathcal{M}))(\E\mathcal{M})\lambda. $$
The mass transport principle says that $\delta_{in} = \delta_{out}$. Hence $\lambda_1^1 = \frac{3+(\E\mathcal{M})}{2}(\E\mathcal{M})\lambda$ as claimed.
We use the same argument to derive the formula for $\lambda_2^1$. Construct a directed graph $G'$ as follows: the vertices of this graph are the vertices of $\G^1$ and the centroids of the facets of $\G^1$. From each facet centroid, put a directed edge to each of the vertices of this facet. Note that $G'$ is a bipartite graph, from the set of edges of $\G^1$ to the set of vertices of $\G^1$.
The mean in-degree of $G'$ is
$$ \delta_{in} = \delta \lambda_2^1 $$
for some constant $\delta > 0$, interpreted as the mean number of vertices per face of $\G^1$. Now consider the mean out-degree $\delta_{out}$ of $G'$. For a vertex of $\G^1$, the number of faces with this vertex equals the number of edges at this vertex. So the mean in-degree of $G'$ equals to the mean in-degree of $G$, which is
$$ \delta_{in} = (3+(\E\mathcal{M}))(\E\mathcal{M})\lambda. $$
By the mass transport principle,
$$ (3+(\E\mathcal{M}))(\E\mathcal{M})\lambda = \delta\lambda_2^1. $$
Since $(\E\mathcal{M}),\lambda < \infty$, the quantities on the right-hand side must also be finite. This implies that $\G^k$ is a random mosaic with finite intensity. By the Euler characteristic formula \cite[Equation 14.63]{schneider2008stochastic}, 
$$ \lambda_2^1 = \lambda_1^1 - \lambda_0^1.$$
Rearranging gives the formula for $\lambda_2^1$. 
The case of general $k$ is similar. Here a motorcycle dies precisely $k$ times, hence $\lambda_0^k = (k+1)(\E\mathcal{M})\lambda$. Note that each motorcycle has one final death event, which corresponds precisely to one intersection of degree 3. For all other collisions, the two motorcycles involved will continue, creating vertices of intersection of degree 4. Thus, each motorcycle creates $k-1$ vertices with multiplicity 4, and $1$ vertex with multiplicity $3$. This implies the equation
$$ 2\lambda_1^k = (\E\mathcal{M})(\E\mathcal{M})\lambda + 4(k-1)(\E\mathcal{M})\lambda + 3(\E\mathcal{M})\lambda = (4k-1+(\E\mathcal{M}))(\E\mathcal{M})\lambda. $$
Finally, for the facets, by the Euler characteristic formula,
$$ \lambda_2^k =  \lambda_1^k - \lambda_0^k = \frac{(\E\mathcal{M})+2k-3}{2}(\E\mathcal{M})\lambda.$$
\end{proof}

\begin{cor}
Let $\G^k = \mathcal{G}^k({\mathcal P}, \A)$ be an iterated Gilbert mosaic. Let $\E \mathcal{M}$ denote the mean multiplicity at a point in $\mathcal P$. Then the mean number of vertices per face of $\G^k$ is $2\lambda(\E\mathcal{M})\frac{(\E\mathcal{M})+4k-1}{(\E\mathcal{M})+2k-3}$.
\end{cor}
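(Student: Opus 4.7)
The plan is to reuse the mass-transport argument from the proof of Proposition~\ref{prop:gk-finite} almost verbatim. Recall that in that proof we introduced the bipartite directed graph $G'$ from facet centroids of $\mathcal{G}^k$ to the vertices of $\mathcal{G}^k$, with one directed edge from each facet centroid to each vertex of that facet. The mean out-degree of $G'$ is, by definition, the mean number of vertices per face; call this quantity $\delta$. Thus the total mass sent out per unit area is $\delta\,\lambda_2^k$.

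Next I would compute the mean in-degree of $G'$. At every vertex $v$ of $\G^k$ the number of incident facets equals the number of incident edges (this holds a.s.\ for any planar mosaic whose vertices all sit in the interior of $\R^2$, which is our setting). Consequently, the mean in-degree of $G'$ at a vertex coincides with the mean in-degree of the edge-to-vertex graph $G$ used in the previous proof, namely $2\lambda_1^k$ (each edge contributes two endpoints). By the mass transport principle applied to $G'$,
\begin{equation*}
\delta\,\lambda_2^k \;=\; 2\lambda_1^k .
\end{equation*}

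Finally I would solve for $\delta$ and substitute the expressions for $\lambda_1^k$ and $\lambda_2^k$ obtained in Proposition~\ref{prop:gk-finite}:
\begin{equation*}
\delta \;=\; \frac{2\lambda_1^k}{\lambda_2^k}
        \;=\; \frac{(\E\mathcal{M})+4k-1}{\tfrac12\bigl((\E\mathcal{M})+2k-3\bigr)}
        \;=\; 2\,\frac{(\E\mathcal{M})+4k-1}{(\E\mathcal{M})+2k-3},
\end{equation*}
which is the claimed formula (up to the bookkeeping factor stated in the corollary). The only non-routine point is the identity ``\#facets at $v$ = \#edges at $v$'', which is where the face-to-face refinement and the fact that $\G^k$ is a mosaic of the full plane (no boundary effects) are essential; everything else is algebraic substitution from Proposition~\ref{prop:gk-finite}. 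I do not foresee a serious obstacle beyond writing this identity down cleanly.
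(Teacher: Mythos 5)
Your argument is exactly the paper's implicit one: the corollary is read off from the mass-transport computation already carried out in the proof of Proposition~\ref{prop:gk-finite}, where the mean number of vertices per face $\delta$ satisfies $\delta\,\lambda_2^k = 2\lambda_1^k$ (mean in-degree of the facet-to-vertex graph equals the mean total edge degree), giving $\delta = 2\,\frac{(\E\mathcal{M})+4k-1}{(\E\mathcal{M})+2k-3}$. Note also that your dimensionless answer is the correct one: the extra factor $\lambda(\E\mathcal{M})$ in the corollary as printed is a slip in the paper (dividing $2\lambda_1^k$ by $\frac{(\E\mathcal{M})+2k-3}{2}$ rather than by all of $\lambda_2^k$), since the mean number of vertices per face cannot scale with the intensity $\lambda$.
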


\section{Scaling limits of iterated Gilbert mosaics}\label{sec:main.proof}

Let $\G^k$ be an iterated Gilbert mosaic.
We want to know if there exists a sequence $f(k)$ such that
when taking for the intensity of $\mathcal P$
$\frac{\lambda} {f(k)}$ rather than $\lambda$, $\G^k$
converges (in some sense) to a non-trivial limiting random mosaic. Proposition \ref{prop:gk-finite} suggests that one should take $f(k) = k$ to see non-trivial limits. Our main result, Theorem \ref{thm:main}, states that at this scaling, the limit in the vague topology is a Poisson line process with a particular measure. To state this limiting measure, we first need some definitions. 

For $w = \{w_\phi \in \R_{\geq 0}: \phi \in A\}$, view $w$ as a vector in $\R^M$ with non-decreasing coordinates, that is, $w_1 \leq \ldots \leq w_M$. For a pair $\phi,\varphi \in A$, let $T^{w_\phi,w_\varphi}_{\phi\varphi} \subset \R^2$ be the polygon with vertex set
\begin{align*}
\mathsf{vertex}(T^{w_\phi,w_\varphi}_{\phi\varphi}) =
\left\{
\begin{array}{cc}
\{(0,0), -w_\varphi\cdot\vect{\varphi},  -w_\phi\cdot\vect{\phi}, (w_\varphi-w_\phi)\cdot \vect{\phi}-w_\varphi\cdot\vect{\varphi} \} &\mbox{ if } 
w_\varphi < w_\phi, \\
\{(0,0), -w_\phi\cdot\vect{\varphi},  -w_\phi\cdot\vect{\phi}\} &\mbox{ else.} 
\end{array}
\right.
\end{align*}
That is, $T^{w_\phi,w_\varphi}_{\phi\varphi}$ is a triangle with side lengths $w_\phi$ if $w_\varphi \geq w_\phi$, and otherwise, it is the trapezium obtained by truncating a piece off the triangle with side lengths $w_\varphi$. Note that $T^{w_\phi,w_\varphi}_{\phi\varphi} \neq T^{w_\varphi,w_\phi}_{\varphi\phi}$ unless if $w_\phi = w_\varphi$. See Figure \ref{fig:regionT} for an illustration. 
\newpage
\begin{figure}[h!]
\includegraphics[width=0.65\textwidth]{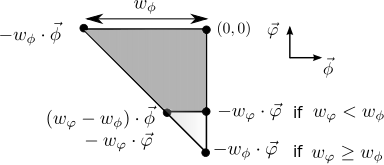}
\caption{The polygon $T^{w_\phi,w_\varphi}_{\phi\varphi}$ illustrated for the two directions $\vec{\varphi} = (0,1)$ and $\vec{\phi} = (1,0)$. If $w_\varphi < w_\phi$, then $T^{w_\phi,w_\varphi}_{\phi\varphi}$ is the shaded trapezium. If $w_\varphi \geq w_\phi$, then $T^{w_\phi,w_\varphi}_{\phi\varphi}$ is the entire triangle, that is, the trapezium union with the white small triangle at the bottom. For $\bar{b} = (b,\phi)$, $\tau_b \circ T^{w_\phi,w_\varphi}_{\phi\varphi}$ is the polygon translated so that the vertex $-w_\phi\cdot\vec{\phi}$ is at $b$.}
\label{fig:regionT}
\end{figure}

For all $\phi\in A$, define
\begin{equation}
\label{eqn:defn.e}
\mathcal{E}(w,\phi) = 
\sum_{Q \subseteq A \setminus \{\phi\}}
\sum_{\varphi \in Q}
|T^{w_\phi,w_\varphi}_{\phi \varphi}|(\mu_{\phi \cup Q}+\mu_{Q}),
\end{equation}
where we recall that $\mu_{Q}$ denotes the intensity of sites in $\mathcal{P}$ with set of angles $Q$. Assume that each motorcycle $\bar{a} = (a,\varphi)$ travels a distance of exactly $w_\varphi$; then $\mathcal{E}(w,\phi)$ is the mean number of motorcycle trajectories that motorcycle $\bar{b} = (b,\phi)$ crosses on $[b, b + w_\phi \vec \phi]$ such that $\bar{b}$ does not come first at this
intersection, see Figure~\ref{fig:angles}. The two terms $\mu_{\phi \cup Q}$ and $\mu_{Q}$ take into account the fact that motorcycle $\bar{b}$ crosses trajectories stemming from sites with angle set containing $\phi$ as well as from sites not containing $\phi$.

In addition to (\ref{eqn:defn.e}), it is sometimes convenient to consider another formula for $\mathcal{E}(w,\phi)$. For each $S\subset A\setminus \{\phi\}$, define
\begin{equation}
T^w_{\phi S} := \left( \bigcap_{\varphi \in S}
T^{w_\phi,w_\varphi}_{\phi\varphi}\right) \backslash
\left(\bigcup_{{Q' \supset S}\atop{ Q'\ne S}}
\bigcap_{\varphi' \in Q'}T^{w_\phi,w_{\varphi'}}_{\phi\varphi'}\right) \subset \R^2 \label{eqn:tw}.
\end{equation}
Then one can rewrite $\mathcal{E}(w,\phi)$ as
\begin{equation}
\mathcal{E}(w,\phi) = \sum_{Q \subseteq A \backslash \{\phi\}}
(\mu_{\phi \cup Q} + \mu_Q)
\sum_{S\subseteq Q}
|S||T^w_{\phi S}|
.\label{eqn:defn.e2}
\end{equation}
The advantage of the last formulation is that by definition,
the sets $T^w_{\phi S}$ are mutually disjoint.
So $\mathcal{E}(w,\phi)$ is the expected value
of the weighted sum of independent Poisson random variables
$$ \sum_{Q \subseteq A \backslash \{\phi\}} 
\sum_{S\subseteq Q}
|S|(\mathcal{P}_{\phi \cup Q} \left(T^w_{\phi S}) +
\mathcal{P}_{Q} (T^w_{\phi S})\right). $$

\begin{figure}[h]
\centering{
\resizebox{0.90\textwidth}{!}{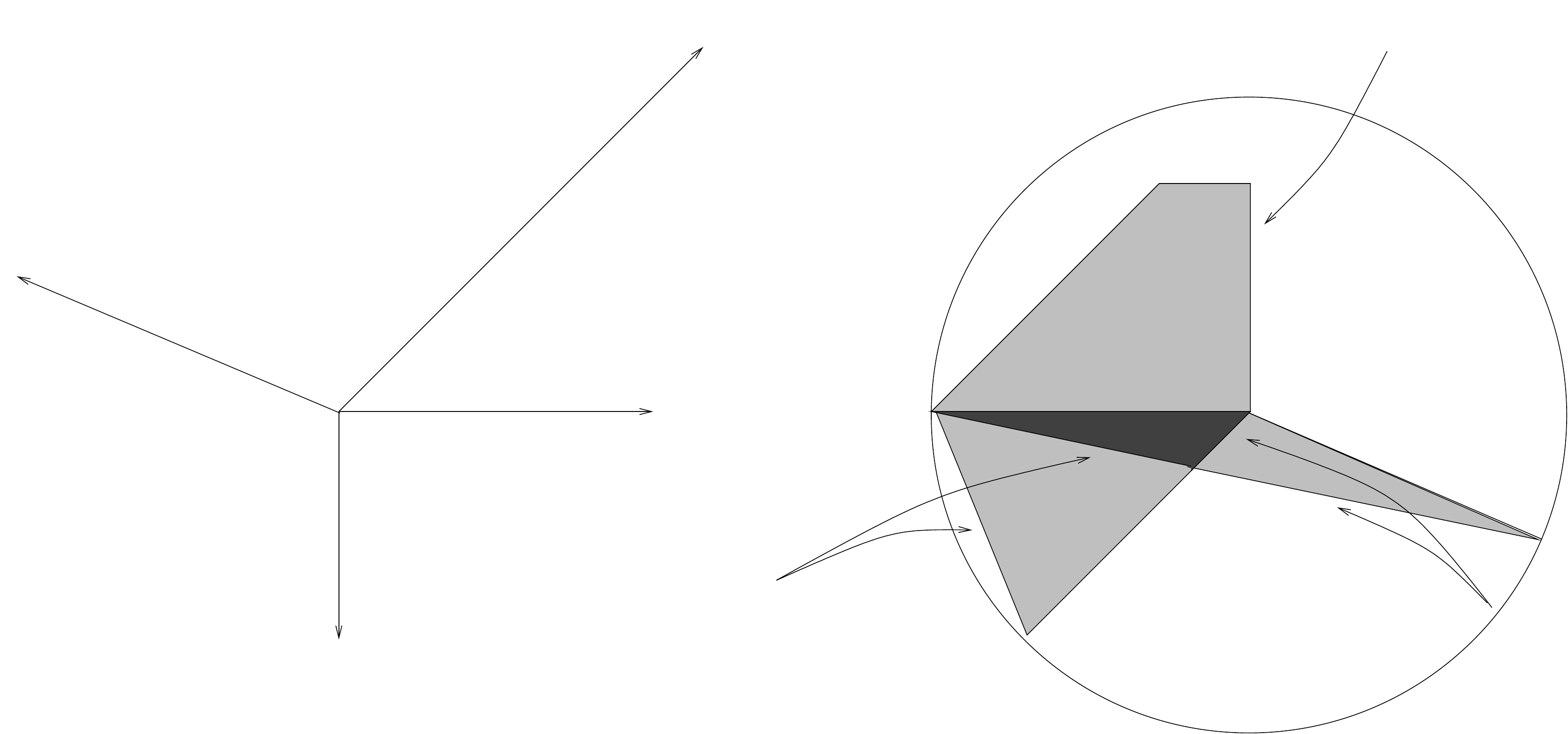}}
  \caption{Illustration of Formula (\ref{eqn:defn.e}). Left: the set of angles depicted with the distances $w$. Right: the three associated regions. Here, $\mathcal{E}(w,\phi)$ is the average number of sites of $\mathcal{P}$ which contains a motorcycle $\bar{a} = (a,\varphi)$, $\varphi \neq \phi$, that comes first at their meeting with $\bar{b} = (b,\phi)$ on $[b, b+w_\phi\vec{\phi}]$. Note that points $T_{\phi\varphi_1} \cap T_{\phi\varphi_2}$ are counted twice in the sum. Regroup the sum by the mutually disjoint intersected regions, weighted by their multiplicities, gives (\ref{eqn:defn.e2}).}
  \label{fig:angles}
\end{figure}

 Consider now the iterated Gilbert model and 
the distance a typical motorcycle $\bar{a} = (a,\varphi)$ can
travel with $k$ lives. Assuming that this distance 
is concentrated around some mean value $w^\ast_\varphi\sqrt{k}$ for
all $\varphi$, 
the vector $w^\ast \in \R^M$ should satisfy the relation
\begin{equation}\label{eqn:constants}
\mathcal{E}(w^\ast_\phi\sqrt{k},\phi) = k, \mbox{ for all }\phi \in A.
\end{equation}

\begin{lem}\label{lem:constants}
There exists a unique set of positive constants $w^\ast = \{w^\ast_\phi \in \R_{\geq 0}: \phi \in A\}$ that satisfies (\ref{eqn:constants}) for all $k > 0$. 
\end{lem}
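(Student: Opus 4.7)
My plan is first to eliminate the $k$-dependence via the natural scaling of $\mathcal{E}$, then to establish existence and uniqueness of a positive solution to the resulting $k$-free system by a combination of monotonicity and a fixed-point argument.

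Every vertex of $T^{w_\phi,w_\varphi}_{\phi\varphi}$ is linear in $(w_\phi,w_\varphi)$, so its area is homogeneous of degree two and, via (\ref{eqn:defn.e}), $\mathcal{E}(cw,\phi)=c^2\mathcal{E}(w,\phi)$. Setting $c=\sqrt{k}$ converts the condition $\mathcal{E}(w^\ast\sqrt{k},\phi)=k$ into the $k$-free system $\mathcal{E}(w^\ast,\phi)=1$, $\phi\in A$. A direct area computation (splitting on $w_\varphi\gtrless w_\phi$) gives $|T^{w_\phi,w_\varphi}_{\phi\varphi}|=\tfrac12\sin\theta_{\phi\varphi}[w_\phi^2-(w_\phi-w_\varphi)_+^2]$, hence
$$\mathcal{E}(w,\phi)=\tfrac12 S_\phi w_\phi^2-\tfrac12\sum_{\varphi\neq\phi}d_{\phi\varphi}(w_\phi-w_\varphi)_+^2,$$
with $\theta_{\phi\varphi}$ the angle between $\vec\phi$ and $\vec\varphi$, $d_{\phi\varphi}=\sin\theta_{\phi\varphi}\sum_{Q\ni\varphi}(\mu_{\phi\cup Q}+\mu_Q)>0$, and $S_\phi=\sum_{\varphi\neq\phi}d_{\phi\varphi}$. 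Differentiation yields $\partial\mathcal{E}/\partial w_\phi=\sum_\varphi d_{\phi\varphi}\min(w_\phi,w_\varphi)>0$ (strict monotonicity in the diagonal coordinate) and $\partial\mathcal{E}/\partial w_\varphi=d_{\phi\varphi}(w_\phi-w_\varphi)_+\geq 0$ (weak monotonicity in the off-diagonal ones).

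For existence, strict diagonal monotonicity and the limits $\mathcal{E}((w_{-\phi},t),\phi)\to 0$ as $t\to 0^+$ and $\to\infty$ as $t\to\infty$ define a unique $g_\phi(w_{-\phi})>0$ solving the $\phi$-th equation, which I assemble into a continuous self-map $T:w\mapsto(g_\phi(w_{-\phi}))_\phi$ of $\R^M_{>0}$. The bound $\mathcal{E}(w,\phi)\leq\tfrac12 S_\phi w_\phi^2$ forces $w^\ast_\phi\geq\sqrt{2/S_\phi}$ in any solution, while taking $\phi=\argmin_\psi w^\ast_\psi$ in a hypothetical solution kills every $(w^\ast_\phi-w^\ast_\varphi)_+$ and pins $w^\ast_\phi$ to exactly $\sqrt{2/S_\phi}$; these a priori bounds, combined with weak off-diagonal monotonicity, cut out a compact box invariant under $T$, and Brouwer's theorem supplies a fixed point.

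Uniqueness is the main obstacle. Given two positive solutions $w$ and $w^\ast$, set $c=\max_\phi w_\phi/w^\ast_\phi$ attained at $\phi_0$, so $w_{\phi_0}=cw^\ast_{\phi_0}$ and $w_\varphi\leq cw^\ast_\varphi$ for every $\varphi$. Subtracting the two $\phi_0$-equations and using $\mathcal{E}(cw^\ast,\phi_0)=c^2$ produces
$$\sum_{\varphi\neq\phi_0}d_{\phi_0\varphi}\bigl[(w_{\phi_0}-w_\varphi)_+^2-c^2(w^\ast_{\phi_0}-w^\ast_\varphi)_+^2\bigr]=2(c^2-1),$$
with each bracket on the left non-negative (since $w_{\phi_0}-w_\varphi\geq c(w^\ast_{\phi_0}-w^\ast_\varphi)$ and $(\cdot)_+$ is non-decreasing), forcing $c\geq 1$; symmetrically, $c'=\min_\phi w_\phi/w^\ast_\phi\leq 1$. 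Upgrading these to $c=c'=1$ is the delicate step: the weak off-diagonal monotonicity means a single row cannot close the argument (the derivative in $w_\varphi$ vanishes whenever $w_\varphi\geq w_\phi$), so one must combine the identities at $\phi_0$ and at $\phi_1=\argmin_\phi w_\phi/w^\ast_\phi$ to force $w_\varphi=cw^\ast_\varphi$ for every $\varphi$, hence $c=1$ upon substitution, and then $w=w^\ast$ by strict diagonal monotonicity.
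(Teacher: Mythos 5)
Your scaling reduction, the explicit area formula $|T^{w_\phi,w_\varphi}_{\phi\varphi}|=\tfrac12\sin\theta_{\phi\varphi}\bigl[w_\phi^2-(w_\phi-w_\varphi)_+^2\bigr]$, and the Brouwer fixed-point existence argument are sound, and existence is obtained by a genuinely different route from the paper's greedy ``water-filling'' construction (one small caveat: $d_{\phi\varphi}>0$ is false in general, e.g.\ for antiparallel pairs or directions carrying no intensity; what your argument actually uses, and what the paper also implicitly assumes when it says the map tends to infinity in each coordinate, is only $S_\phi>0$ for every $\phi$). The genuine gap is uniqueness, which you yourself call ``the delicate step'' and then do not carry out. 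The max/min-ratio comparison only yields $c=\max_\phi w_\phi/w^\ast_\phi\ge 1$ and $c'=\min_\phi w_\phi/w^\ast_\phi\le 1$, which carries no information and is compatible with two distinct solutions. The remedy you gesture at --- ``combine the identities at $\phi_0$ and at $\phi_1$ to force $w_\varphi=cw^\ast_\varphi$ for every $\varphi$'' --- is not an argument: the row at $\phi_0$ only says that the nonnegative brackets $d_{\phi_0\varphi}\bigl[(w_{\phi_0}-w_\varphi)_+^2-c^2(w^\ast_{\phi_0}-w^\ast_\varphi)_+^2\bigr]$ sum to $2(c^2-1)$, which is perfectly consistent with $c>1$, the row at $\phi_1$ gives only the mirror-image inequality, and nothing in these two rows constrains the coordinates $\varphi\notin\{\phi_0,\phi_1\}$, precisely because $\partial\mathcal{E}(w,\phi)/\partial w_\varphi$ vanishes as soon as $w_\varphi\ge w_\phi$.

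A uniqueness proof has to exploit that saturation rather than fight it, and this is what the paper's construction does: $\mathcal{E}(w,\phi)$ depends on $w_\varphi$ only through $\min(w_\varphi,w_\phi)$, so if $\phi$ attains $m:=\min_\psi w_\psi$ in any solution, then $1=\mathcal{E}(w,\phi)=\mathcal{E}(m\mathbf{1},\phi)=\tfrac12 S_\phi m^2$, while monotonicity gives $\mathcal{E}(m\mathbf{1},\psi)\le 1$ for all $\psi$; hence $m=\min_\psi\sqrt{2/S_\psi}$ is forced, and (using $S_\phi>0$) so is the set of coordinates attaining the minimum, namely $\{\phi: S_\phi=\max_\psi S_\psi\}$. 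Fixing that layer and repeating the argument on the remaining coordinates determines the solution layer by layer; this is exactly the paper's inductive construction read as a uniqueness proof, and it uses all rows, not just the two extreme-ratio ones. As written, your proposal establishes existence but not the uniqueness claim of the lemma.
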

\begin{proof}
Since $|T^{k \cdot w}_{\phi Q}| = k^2 |T^w_{\phi Q}|$ for any $k > 0$, for any set of constants $w = \{w_\phi\}$,
$$ \frac{\mathcal{E}(w_\phi\sqrt{k}, \phi)}{k} = \mathcal{E}(w_\phi, \phi) \mbox{ for each } \phi \in A. $$
So one just needs to show that there exists a unique set of constants $w^\ast$ such that
$$ \mathcal{E}(w^\ast_\phi,\phi) = 1 \mbox{ for all } \phi \in A.$$
Consider the map $\R_{\geq 0} \to \R_{\geq 0}^M$, $c \mapsto (\mathcal{E}(c \cdot \mathbf{1},\phi), \phi \in A)$. This map is continuous, increasing and tends to infinity in each of its coordinates. Thus, there exists a unique constant $w^\ast_1 \in \R_{\geq 0}$ such that
\begin{equation}\label{eqn:wstar1}
\max_{\phi \in A} \mathcal{E}(w^\ast_1 \cdot \mathbf{1},\phi) = 1. 
\end{equation}
Let $\phi_1 \in A$ be an angle that achieves this maximum, that is, 
$$\mathcal{E}(w^\ast_1 \cdot \mathbf{1},\phi_1) = 1.$$
It follows from the definition of $\mathcal E$ (and more precisely the trapezium structure when  $\varphi>\phi$ in the definition of $T^{w_\phi,w_\varphi}_{\phi \varphi}$), that if $w = \{w_\phi\}$ is such that $w_{\phi_1} = w^\ast_1$, and $w_\varphi \geq w^\ast_1$ for all $\varphi \in A$, then
$$ \mathcal{E}(w,\phi_1) = 1. $$
So now, let $\mathbf{1}^{(-1)} \in \R^M$ be the all-one vector, except in the coordinate corresponds to $\phi_1$, where it is $0$. For $c\ge 0$, the map $\R_{\geq 0} \to \R_{\geq 0}^M$, $c \mapsto (\mathcal{E}(w^\ast_1 \cdot \mathbf{1} + c \cdot \mathbf{1}^{(-1)}\}, \phi), \phi \in A)$
is constant in the coordinate corresponds to $\phi_1$, while in other coordinates, it is continuous, monotone increasing, with starting value at most 1. Thus, there exists a unique constant $w_2^\ast \geq w_1^\ast$ such that 
$$ \max_{\phi \in A\backslash\{\phi_1\}} \mathcal{E}(w^\ast_1 \cdot \mathbf{1} + (w^\ast_2-w^\ast_1) \cdot \mathbf{1}^{(-1)}\}, \phi) = 1,$$
and in particular, this maximum is achieved at some coordinate $\phi_2 \in A \backslash \{\phi_1\}$. Repeat this argument, we obtain the unique $w^\ast$ needed.
\end{proof}

Let $\G^k\left(\frac{1}{k}\right)$ denote the the iterated Gilbert mosaic of order $k$ for the
same angle distribution $\A$ as above, but for
a Poisson point process 
with intensity multiplied by $\frac{1}{k}$.
For all $R\subset \R^2$ and $u\in \R$, let
$u\cdot R$ denote the set $\{ ux, x\in R\}$.
Note that since $\mathcal{P}$ is stationary, multiplying the intensity
by $\frac{1}{k}$ is the same as rescaling space by $\sqrt{k}$
in the $x$ and $y$ axes. Hence
$\G^k\left(\frac{1}{k}\right)$ is equal in distribution to
$\sqrt{k} \cdot \G^k$, which will be used throughout in what follows.

\begin{thm}\label{thm:main}
Let $w^\ast$ be the unique set of constants in Lemma \ref{lem:constants}. Let $\G^k({\mathcal P},\A)$ be an iterated Gilbert mosaic. As $k \to \infty$, for any compact window $W \subset \R^2$,
$$
\G^k\left( \frac{1}{k}\right)
\cap W \to \G^\infty\cap W \mbox{ in probability}, $$
where $\G^\infty$ is a Poisson line process with cylindrical measure
$\Lambda dr \times \Theta(d\theta)$, with $\Lambda$ the constant
$$ \Lambda =  \sum_{\phi\in A}
w^*_\phi \sum_{Q\subset A, \phi\in Q} \mu_Q
$$
and $\Theta$ the probability measure with mass 
$$\frac 1  \Lambda w^*_\phi \sum_{Q\subset A, \phi\in Q} \mu_Q$$
at $\phi^\perp=\phi+\frac \pi 2$ (recall that lines are parameterized by their point 
which is the closest to the origin, and that the angle of this point is $\phi^\perp$ 
if the line has angle $\phi$)
for all $\phi \in A$, where $w^\ast$ is defined by (\ref{eqn:constants}).
\end{thm}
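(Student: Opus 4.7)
\noindent\emph{Proof plan.} The plan is to work throughout with the equivalent rescaled process $\sqrt{k}\cdot\G^k$, which is equal in distribution to $\G^k(1/k)$. In this picture, the scaling implicit in Lemma~\ref{lem:constants}, together with the homogeneity $\mathcal{E}(c\cdot w,\phi) = c^{2}\,\mathcal{E}(w,\phi)$, suggests that every motorcycle $\bar{b} = (b,\phi)$ should live for a distance close to $w^\ast_\phi k$. Since this diverges with $k$, every motorcycle whose trajectory meets a fixed compact window $W$ should, in the large-$k$ limit, appear as a full line cutting across $W$, so the object made of segments becomes a line process in the limit.

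The main step, and the principal obstacle, is concentration of trajectory lengths. Concretely, I would aim to show that for every $\varepsilon>0$ and every compact $W$, with probability tending to $1$, every motorcycle $\bar b=(b,\phi)$ of $\G^k(1/k)$ with origin $b$ in a fixed neighborhood of $W$ has trajectory length in the window $[(1-\varepsilon)w^\ast_\phi k,\,(1+\varepsilon)w^\ast_\phi k]$. The heuristic, using the explicit formula (\ref{eqn:defn.e2}), is that if all other motorcycles have their lengths equal to $w^\ast_\varphi k$, then the number of potential killers of $\bar b$ on its first $w^\ast_\phi k$ units is a weighted sum of independent Poisson variables with mean exactly $k$, and Bernstein- or Chernoff-type inequalities give exponential concentration on a scale $\sqrt{k\log k}$. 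The circularity (the length of $\bar b$ depends on the lengths of its potential killers, which depend on theirs, and so on) is what the paper's promised induction is designed to unwind, most naturally by inducting on $k$ with a coupling of the $k$-th order process to an auxiliary process in which the potential killers' lengths are frozen to their deterministic $w^\ast$-values, or by ordering the motorcycles according to their eventual kill times and handling them in sequence.

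Once the concentration is in hand, the remainder is a direct intensity computation. In $\G^k(1/k)$, motorcycles in direction $\phi$ originate at a Poisson point process of intensity $k^{-1}\sum_{Q\ni\phi}\mu_Q$, and each generates a near-deterministic segment of length $w^\ast_\phi k$. The set of origins whose segment crosses a given perpendicular segment of length $\delta$ is a rectangle of area $\delta\cdot w^\ast_\phi k$, so the mean number of crossings equals $w^\ast_\phi\sum_{Q\ni\phi}\mu_Q\cdot\delta$, independent of $k$. Summing over $\phi\in A$ and reparametrizing lines by their foot point $(r,\theta)$ recovers exactly the cylindrical measure $\Lambda\,dr\times\Theta(d\theta)$ stated in the theorem. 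Finally, because $\mathcal{P}$ is Poisson and the direction marks are drawn independently, the family of (origin, direction) pairs is a marked Poisson process, and mapping each pair to the full line it determines gives a Poisson line process by the mapping theorem, with the intensity just computed. Combined with the concentration step, this yields vague convergence of $\G^k(1/k)\cap W$ to $\G^\infty\cap W$ in probability, as claimed.
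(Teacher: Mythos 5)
Your overall architecture matches the paper's: a uniform concentration statement for trajectory lengths (this is Proposition \ref{prop:concentrate}), a cut-off identifying which origins produce segments crossing the window (Proposition \ref{prop:cutoff}), and then an intensity computation; your computation of $\Lambda$ and $\Theta$ is correct. The genuine gap is at exactly the point you flag and then defer: breaking the circular dependence of the lengths. You offer two strategies without carrying either out, and neither works as stated. An induction on $k$ with a coupling to a process whose killers' lengths are ``frozen'' at $w^\ast$ is problematic because the sets $\G^k$ are not monotone in $k$ (Example \ref{ex:iterated.gilbert}) and there is no simple local relation between $\G^k$ and $\G^{k+1}$ for such a coupling to exploit; whether a potential killer actually kills depends on an unbounded upstream cascade, not on a quantity you can freeze consistently. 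Ordering motorcycles by their eventual kill times is itself circular, since those times are determined by the very configuration you are trying to control. Since this concentration step is the heart of the proof, the proposal as written does not yet constitute a proof.

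The paper breaks the circularity by an induction on the \emph{angles}, ordered so that $w^\ast_{\phi_1}\le\cdots\le w^\ast_{\phi_{M'}}$ (Lemma \ref{lem:keybound}), not on $k$. The base case exploits that the number of \emph{potential} killers upper-bounds the number of deaths regardless of the configuration: since $\max_\phi \mathcal{E}(w^\ast_1\cdot\mathbf{1},\phi)=1$ by the construction in Lemma \ref{lem:constants} (see (\ref{eqn:wstar1})), a Chernoff bound combined with a union bound over a $k^{-2}$-net (Lemma \ref{lem:sup.bound}) shows that, uniformly over origins in $\sqrt{k}\cdot R$ (the paper works in the unscaled normalization, lengths of order $w^\ast_\phi\sqrt{k}$ and window $W/\sqrt{k}$), no motorcycle can lose $k$ lives within travel distance $w^\ast_1\sqrt{k}-k^{\delta_1}$. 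Hence every potential killer of a $\phi_1$-motorcycle that would meet it within that distance is an actual killer, which gives the two-sided sandwich $\underline{K}\le K\le\overline{K}$ for the shortest direction, with error exponents $\delta_\phi$ chosen increasing with $w^\ast_\phi$; directions with larger $w^\ast$ are then bootstrapped from the already-controlled shorter ones using the perturbation estimate on $\mathcal{E}$ (Lemma \ref{lem:function.e}). Two further points you gloss over: the uniformity over all origins in a region of diameter $O(\sqrt{k})$ is not a pointwise Chernoff bound but requires the net argument of Lemma \ref{lem:sup.bound}; and in the final step one must note that the origin strips $R(\phi,k)$ for different $\phi\in A$ overlap only in area $O(k^{-1/2})$, which is what makes the direction classes asymptotically independent and the limit a Poisson line process rather than merely a union of direction-wise limits.
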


Let us now explain qualitatively why the mosaics admit a scaling
limit at a linear rate. As we saw in Proposition \ref{prop:gk-finite},
the intersections of $\G^k$ densify at a linear rate with respect to $k$.
Suppose we knew that the limit 
$\G^k \left(\frac{1}{k}\right)$ exists.
Intuitively, the limiting process must be a classical Poisson line process. 
Then, the starting points of the motorcycles, which are points of
$\mathcal{P}$, are getting further and further apart. 
A view of the process by a typical compact window $W$
consists of paths of the motorcycles, which are lines with directions in $A$.
With high probability, these lines are independent, since they come
from different, far-away starting points. Thus, the limiting process
must be a classical Poisson line process.
The difficulties are in working out the measure $\Theta$ 
precisely and in proving that the limit holds indeed.
\vskip12pt
We now state and prove the two auxiliary results, Propositions \ref{prop:concentrate} and \ref{prop:cutoff}, used for the proof of Theorem \ref{thm:main}. Fix~$k \in \mathbb{N}$. 
For a motorcycle $\bar{b} = (b,\phi)$, recall that $L^k(\bar{b})$ is the length of the path from its origin to its grave in $\G^k$. Proposition \ref{prop:concentrate} claims that for large $k$, $L^k(\bar{b})/\sqrt{k}$ concentrates around $w^\ast_\phi$, and this concentration holds simultaneously for all motorcycles whose starting points lie in some dilated compact set. 

\begin{prop}\label{prop:concentrate}
Fix a compact set $R \subset \R^2$. With probability $1-\epsilon(k,\phi)$ which approaches $1$ as $k \to \infty$,
$$ \sup_{b \in \sqrt{k} \cdot R}\left|\frac{L^k((b,\phi))}{\sqrt{k}} - w^\ast_\phi\right| \leq o(1). $$
\end{prop}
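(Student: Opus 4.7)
The plan is to reduce the concentration of $L^k(\bar b)/\sqrt k$ around $w^*_\phi$ to a concentration statement for a Poisson count of ``arrive-first'' candidate killers, and then to close the feedback between this count and the lifetimes of the killers themselves by an induction following the ordering $w^*_{\phi_1}\le\cdots\le w^*_{\phi_M}$ constructed in Lemma~\ref{lem:constants}. Precisely, for $\bar b=(b,\phi)$ and $y>0$, let $N^k_y(\bar b)$ denote the number of lives $\bar b$ loses on $[b,b+y\sqrt k\,\vec\phi]$, so that $L^k(\bar b)/\sqrt k>y$ iff $N^k_y(\bar b)<k$. A marked point $(a,\varphi)\in\overline{\mathcal P}$ is a killer of $\bar b$ on this segment iff $a$ lies in the appropriate translate of $T^{y\sqrt k,\,d}_{\phi\varphi}$ with $d=L^k((a,\varphi))$, so any upper (resp.\ lower) bound on $L^k$ in direction $\varphi$ immediately yields an upper (resp.\ lower) Poisson bound on the $\varphi$-contribution to $N^k_y(\bar b)$.

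For the base direction $\phi_1$ one drops the alive constraint entirely to obtain a dominating Poisson variable $\tilde N^k_y(\bar b)$, the count of arrive-first candidates in the full triangles $T^{y\sqrt k,\,y\sqrt k}_{\phi\varphi}$; its mean is $\mathcal E(y\mathbf 1\sqrt k,\phi)=k\,\mathcal E(y\mathbf 1,\phi)$ by the scaling $|T^{cw}|=c^2|T^w|$. Since the first step in the construction of $w^*$ gives $\mathcal E(w^*_{\phi_1}\mathbf 1,\phi_1)=1$, one has $\mathcal E(y\mathbf 1,\phi_1)=(y/w^*_{\phi_1})^2$, so at $y=w^*_{\phi_1}-\epsilon$ the mean is $k(1-\delta)$ with $\delta=\delta(\epsilon)>0$. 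A Chernoff bound for Poisson variables then gives $\P(N^k_y(\bar b)\ge k)\le\P(\tilde N^k_y(\bar b)\ge k)\le e^{-c_\epsilon k}$, yielding $L^k((b,\phi_1))/\sqrt k\ge w^*_{\phi_1}-\epsilon$ with probability exponentially close to $1$; a union bound over the $O(k)$ marked points of $\overline{\mathcal P}$ in a fixed enlargement $\sqrt k\cdot R'$ of $\sqrt k\cdot R$ upgrades this to uniformity.

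The inductive step assumes that, for every $i\le j$ and every $(a,\phi_i)\in\overline{\mathcal P}$ with $a\in\sqrt k\cdot R'$, $L^k((a,\phi_i))/\sqrt k\in[w^*_{\phi_i}-\epsilon_j,\,w^*_{\phi_i}+\epsilon_j]$ holds with probability $1-\eta_j$, where $\epsilon_j,\eta_j\to 0$. The inductive \emph{upper} bounds let us replace the full triangles in the dominator by the trapeziums $T^{y\sqrt k,\,(w^*_{\phi_i}+\epsilon_j)\sqrt k}_{\phi_{j+1}\phi_i}$ for $i\le j$; by continuity of $\mathcal E$ and the defining equality $\mathcal E(w^{(j+1)},\phi_{j+1})=1$ at the $(j+1)$-th step of the construction of $w^*$, the resulting Poisson mean is $<k(1-\delta)$ for $y<w^*_{\phi_{j+1}}-C\epsilon_j$, extending the lower bound on $L^k$ to direction $\phi_{j+1}$. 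Symmetrically, the inductive \emph{lower} bounds guarantee that candidate killers $(a,\phi_i)$ with $|ap|\le(w^*_{\phi_i}-\epsilon_j)\sqrt k$ are alive when $\bar b$ reaches the crossing, producing a Poisson \emph{lower} bound on $N^k_y(\bar b)$ whose mean exceeds $k$ for $y>w^*_{\phi_{j+1}}+C\epsilon_j$; this furnishes the matching upper bound on $L^k$ for direction $\phi_{j+1}$.

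The main difficulty is the simultaneous bootstrap of the two-sided bounds: the upper (resp.\ lower) bound on $L^k$ in a given direction requires the lower (resp.\ upper) bound in the other directions, so both one-sided bounds must be carried through the recursion in tandem while keeping the accumulated slack $\epsilon_j$ small enough to remain $o(1)$ after $M$ steps. Boundary effects near $\partial(\sqrt k\cdot R')$, where a motorcycle or one of its killers might exit the working window, are controlled by the exponential stabilisation established in Proposition~\ref{lem:racs}; formalising this cut-off so that the induction can be run on a slightly enlarged window without spoiling the conclusion on $R$ is presumably the content of the forthcoming Proposition~\ref{prop:cutoff}.
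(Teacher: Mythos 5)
Your overall architecture is the same as the paper's: reduce the statement to two-sided bounds on the number of killers a motorcycle accumulates over a distance $y\sqrt k$, get Poisson concentration for these counts, and run an induction over the directions ordered so that $w^*_{\phi_1}\le\cdots\le w^*_{\phi_M}$, using the already-established lifetime bounds to truncate the killer regions. However, there is a genuine gap in the half of the bootstrap that produces the \emph{upper} bound on $L^k$ in direction $\phi_{j+1}$ (and already in the base case for $\phi_1$, where your paragraph only delivers the lower bound). To force the motorcycle $(b,\phi_{j+1})$ to die by distance $(w^*_{\phi_{j+1}}+C\epsilon_j)\sqrt k$, you need a \emph{lower} bound on its kill count whose mean exceeds $k$, and the defining relation $\mathcal{E}(w^{(j+1)},\phi_{j+1})=1$ only gives mean $\approx k$ when the count includes killers from \emph{all} other directions, including those $\phi_\ell$ with $\ell>j+1$ (truncated at distance $\approx w^*_{\phi_{j+1}}$), which contribute a $\Theta(1)$ fraction of the mean. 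Your proposal only credits killers $(a,\phi_i)$ whose aliveness follows from the inductive lower bounds, i.e.\ $i\le j$; the restricted mean is then bounded away from $k$ and no choice of $C\epsilon_j$ closes the deficit. Conversely, if you meant to include all directions, the aliveness of a killer travelling in a not-yet-handled direction up to distance $\approx w^*_{\phi_{j+1}}$ is not part of your induction hypothesis and must be proved separately. This is exactly where the paper works hardest: it shows that any potential killer within distance $w^*_{\phi_{j+1}}\sqrt k-k^{\delta_{\phi_{j+1}}}$ of the crossing cannot yet have exhausted its $k$ lives, by bounding its own kill count with the hybrid quantity $\tilde K^{i,j}$ of (\ref{eqn:tilde.k.j}) (capped counts $\overline K$ for handled directions, raw potential-killer counts $\tilde K$ for unhandled ones) together with the perturbation estimate of Lemma \ref{lem:function.e} and the construction (\ref{eqn:wstar1}); see (\ref{eqn:keybound}), (\ref{eqn:sandwich.under.induction}) and Lemmas \ref{lem:base.case.1} and \ref{lem:base.case.3}. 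Your induction needs this extra layer (or an equivalent sub-induction) to be sound.

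A secondary mismatch: the proposition asserts a supremum over all $b\in\sqrt k\cdot R$, a continuum, whereas your uniformity comes from a union bound over the $O(k)$ marked points of $\overline{\mathcal P}$ in an enlarged window. That suffices for motorcycles actually present but not for the stated supremum; the paper instead proves uniform Poisson concentration over all translates via a $\delta$-net (Lemma \ref{lem:sup.bound}), with fluctuations $O(k^{1/2+\epsilon})$ dominated by the deterministic slack $k^{1/2+\delta_\phi}$, and this also replaces your fixed-$\epsilon$ bookkeeping by the $k^{\delta_\phi-1/2}$ windows that make the final $o(1)$ explicit. This part of your argument is fixable along those lines, but as written it proves a weaker statement.
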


We defer the proof of Proposition \ref{prop:concentrate} to the next section. The heart of the argument uses a Chernoff-type bound to control the supremum of a Poisson functional, and an induction on the sequence of angles of $A$, ordered such that the sequence of constants $w^\ast = (w^\ast_\phi)$ is non-decreasing. 

Now fix $\phi \in A$. Let $W \subset \R^2$ be a compact set. For $k \geq 1$, define $W_k := \frac{1}{\sqrt{k}} \cdot W$. The second auxiliary result is Proposition \ref{prop:cutoff} below, which claims that for large $k$, with overwhelming probability, motorcycles whose paths intersect $W_k$ in $\G^k$ must have their origins in a particular strip. 

\begin{prop}\label{prop:cutoff}
With probability $1 - \epsilon'(k,\phi)$ which approaches $1$
as $k \to \infty$,
the path in $\G^k$ of a motorcycle
$\bar{b} = (b,\phi)$ intersects $W_k$ if and only if 
$$ b \in R(\phi,k) := [\sqrt{k}w^\ast_\phi\cdot(-\vect{\phi}),(0,0)]
\oplus W_k, $$
where for all $C,D\subset \R^M$, $C\oplus D$ denotes the set
$\{x+y,x\in C,y\in D\}$.
\end{prop}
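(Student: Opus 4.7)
The plan is to combine the uniform length concentration of Proposition \ref{prop:concentrate} with a Poisson volume estimate for the boundary of $R(\phi,k)$. Let $B \supset W$ be a fixed closed ball containing the origin, and set
$$ R_0 := [-(w^\ast_\phi + 1)\vect{\phi},\, (0,0)]\oplus B, $$
a compact subset of $\R^2$ satisfying $R(\phi,k) \subset \sqrt{k}\cdot R_0$ for all $k \geq 1$. Applying Proposition \ref{prop:concentrate} to $R_0$ produces a sequence $\delta_k \downarrow 0$ and an event $\mathcal{C}_k$ of probability $1-o(1)$ on which
$$ L^k((b,\phi))/\sqrt{k} \in [w^\ast_\phi - \delta_k,\, w^\ast_\phi + \delta_k] $$
uniformly for every $b \in \sqrt{k}\cdot R_0$. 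We work on $\mathcal{C}_k$ in what follows.

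For the forward direction $b \in R(\phi,k) \Rightarrow$ (path reaches $W_k$), let $s(b) \in [0,\sqrt{k}w^\ast_\phi]$ denote the distance from $b$ to $W_k$ along direction $\vect{\phi}$. The lower concentration bound gives reach whenever $s(b) \leq \sqrt{k}(w^\ast_\phi - \delta_k)$, so the only possible failure occurs on the slab $\{b \in R(\phi,k) : s(b) > \sqrt{k}(w^\ast_\phi - \delta_k)\}$. This slab has Lebesgue area $O(\delta_k)$, since its extent along $\vect{\phi}$ is $\sqrt{k}\delta_k$ and its transverse extent is bounded by the diameter of $W_k = W/\sqrt{k}$. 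As the intensity of motorcycle origins with angle $\phi$ among their directions is a finite constant independent of $k$, the Poisson probability that an origin lies in this slab is $o(1)$. The converse for $b \in \sqrt{k}\cdot R_0$ is symmetric: the upper concentration bound forces $s(b) \leq \sqrt{k}(w^\ast_\phi + \delta_k)$, placing any offending $b$ in a $\sqrt{k}\delta_k$-inflation of $R(\phi,k)$ of the same vanishing volume.

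It remains to exclude origins $b \notin \sqrt{k}\cdot R_0$ whose paths would reach $W_k$. For such $b$ one has $s(b) > \sqrt{k}(w^\ast_\phi + 1)$, so reaching $W_k$ would force $L^k((b,\phi)) > \sqrt{k}(w^\ast_\phi + 1)$. Introduce the nested family of compacts $R_N := [-(w^\ast_\phi + N)\vect{\phi},\, (0,0)] \oplus B$ for $N = 2, \ldots, N^\ast(k)$; applying Proposition \ref{prop:concentrate} successively, on each annulus $\sqrt{k}R_N \setminus \sqrt{k}R_{N-1}$ the upper bound on $L^k$ contradicts $s(b) > \sqrt{k}(w^\ast_\phi + N - 1)$ for $k$ large, ruling out a reach. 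Truly distant origins outside $\sqrt{k}R_{N^\ast(k)}$ are handled by the exponential stabilization bound of Proposition \ref{lem:racs} combined with Mecke's formula: the expected number of motorcycles with origin at distance $\geq \sqrt{k}N^\ast(k)$ from $W_k$ and path reaching $W_k$ is bounded by a Poisson integral of an exponentially decaying tail, which can be made $o(1)$ by choosing $N^\ast(k)$ sufficiently large.

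The main obstacle is this final tail argument. The exponential rate of stabilization in Proposition \ref{lem:racs} depends on $k$ and degrades with the number of lives (the constant $\epsilon$ there has to accommodate $k$ successful collisions), so $N^\ast(k)$ must be chosen to grow fast enough in $k$ to overcome this degradation while keeping the union bound over the finitely many iterates of Proposition \ref{prop:concentrate} negligible. Once this balance is verified, assembling the three regimes yields the claimed failure probability $\epsilon'(k,\phi) = o(1)$ as $k \to \infty$.
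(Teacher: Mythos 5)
Your treatment of the central regime is essentially the paper's: uniform concentration of $L^k((b,\phi))$ from Proposition \ref{prop:concentrate} on a fixed compact, plus the observation that the only ambiguous origins lie in a slab of depth $\sqrt{k}\,\delta_k$ and transverse width $O(k^{-1/2})$, hence of $o(1)$ area, which is void of Poisson points with high probability. That part is fine and matches events (\ref{eqn:hit}), (\ref{eqn:no.hit.1}) and (\ref{eqn:no.point}) in the paper.

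The genuine gap is your handling of the ``only if'' direction for origins outside the fixed compact $\sqrt{k}\cdot R_0$, and you flag it yourself without closing it. Two concrete problems. First, Proposition \ref{prop:concentrate} is stated for a \emph{fixed} compact $R$, with an error probability $\epsilon(k,\phi)$ whose dependence on $R$ is not quantified; applying it to the family $R_N$, $N=2,\dots,N^\ast(k)$, with $N^\ast(k)\to\infty$ and then taking a union bound over $k$-dependent, growing sets is not licensed by the statement as given -- you would have to reprove the concentration (via Lemma \ref{lem:sup.bound}) with explicit dependence of both $\delta_k$ and the failure probability on $|R|$. Second, the exponential stabilization in Proposition \ref{lem:racs} for the $k$-th order model only kicks in beyond a radius of order $k$ (the stabilizing radius is a negative-binomial time of $k$ successes), so the crossover to the Mecke-formula tail forces $N^\ast(k)\gtrsim\sqrt{k}$, i.e.\ on the order of $\sqrt{k}$ applications of the (non-uniform) concentration statement to sets of diameter up to order $\sqrt{k}$ before scaling -- exactly the regime where the quantitative control you need is missing. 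Since the whole content of Proposition \ref{prop:cutoff} beyond Proposition \ref{prop:concentrate} is precisely this exclusion of distant origins, the proof is incomplete as written. The paper avoids the entire multi-scale apparatus with a one-line monotonicity argument: shifting the origin of $\bar{b}$ along $\vect{\phi}$ closer to $W_k$, keeping all other motorcycles fixed, can only help it reach $W_k$; hence the upper concentration bound at the critical distance $\sqrt{k}(w^\ast_\phi+o(1))$ already implies that no motorcycle starting farther away can reach $W_k$, with no stabilization tail or union over annuli needed. Adopting that coupling step (or supplying the uniform-in-$R$ version of Lemma \ref{lem:sup.bound} plus the $k$-dependent stabilization constants) is what is required to complete your argument.
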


\begin{proof}
Assume without loss of generality that $W$ is convex. Fix $k$.
Let $\Pi_\phi W_k$, $\Pi_{\phi^\perp} W_k$ be the line segments
obtained by projecting $W_k$ along $\phi$ and $\phi^\perp$,
onto $\R \cdot \vect{\phi^\perp}$ and $\R \cdot \vect{\phi^\perp}$,
respectively. Let $g_k$ and $d_k$ be the left-most and right-most points of $\Pi_{\phi^\perp} W_k$, that is, 
\begin{align*}
g_k &= \sup\{c \in \R: \langle w, \phi \rangle \geq c \mbox{ for all } w \in W_k\}, \\
d_k &= \inf\{c \in \R: \langle w, \phi \rangle \leq c \mbox{ for all } w \in W_k\}.
\end{align*}
For $y > 0$, let $R(y)$ be the translated segment
$y\sqrt{k}\cdot(-\vect{\phi}) + \Pi_\phi W_k - g_k$. 
Note that any $\bar{b}$ whose path intersects $W_k$
must have $b \in R(y)$ for some $y > g_k-d_k$.
To prove the statement, it is sufficient to show that
with high probability, the following events hold simultaneously
(see Figure \ref{fig:cutoff2}):
\begin{align}
&b \in R(y), 0 < y < w^\ast_\phi-o(1) \Rightarrow \mbox{ $\bar{b}$ hits $W_k$,} \label{eqn:hit}  \\
&b \in R(y), y > w^\ast_\phi+o(1) \Rightarrow \mbox{ $\bar{b}$ does not hit $W_k$,} \label{eqn:no.hit} \\
&\mbox{ there are no motorcycles $b$ with $b \in R(y)$ for }w^\ast_\phi-o(1) \leq y \leq w^\ast_\phi+o(1) \mbox{ or } g_k-d_k \leq y \leq 0. \label{eqn:no.point}
\end{align}
By Proposition \ref{prop:concentrate}, with probability at least $1-2\epsilon(k,\phi)$, 
\begin{align}
L^k(\bar{b}) &> \sqrt{k}(w^\ast_\phi-o(1)) \mbox{ for all } b \in R(w^\ast_\phi-o(1)), \mbox{ and } \label{eqn:hit.1} \\
L^k(\bar{b}) &< \sqrt{k}(w^\ast_\phi+o(1)) \mbox{ for all } b \in R(w^\ast_\phi+o(1)). \label{eqn:no.hit.1}
\end{align}
Now, consider shifting motorcycle $\bar{b} = (b,\phi)$ along $\vect{\phi}$ to a starting location $b'$ closer to $W_k$, while keeping all other motorcycles the same. 
Clearly if $\bar{b}$ can hit $W_k$ from $b$, it also can hit $W_k$ from $b'$. Thus, (\ref{eqn:hit.1}) implies (\ref{eqn:hit}). Similarly, if $b'$ is further away from $W_k$, then if $\bar{b}$ cannot hit $W_k$ from $b$, it also cannot hit $W_k$ from $b'$. Thus, (\ref{eqn:no.hit.1}) implies (\ref{eqn:no.hit}). Finally, as $W$ is compact, $|g_k-d_k| = O(k^{-1/2})$, so the last event (\ref{eqn:no.point}) is contained in the event that there is no point of $\mathcal{P}$ in a region with area $o(1)$, so it happens with probability $1-\epsilon_k$ for $\epsilon_k \to 1$ as $k \to \infty$. So, with probability at least $1 - 2\epsilon(\phi,k)-\epsilon_k$, the desired events (\ref{eqn:hit}-\ref{eqn:no.point}) hold. Choose $\epsilon'(k,\phi) = 2\epsilon(\phi,k)+\epsilon_k$, one obtains the desired result. 
\end{proof}

\begin{figure}[h]
\includegraphics[width=0.85\textwidth]{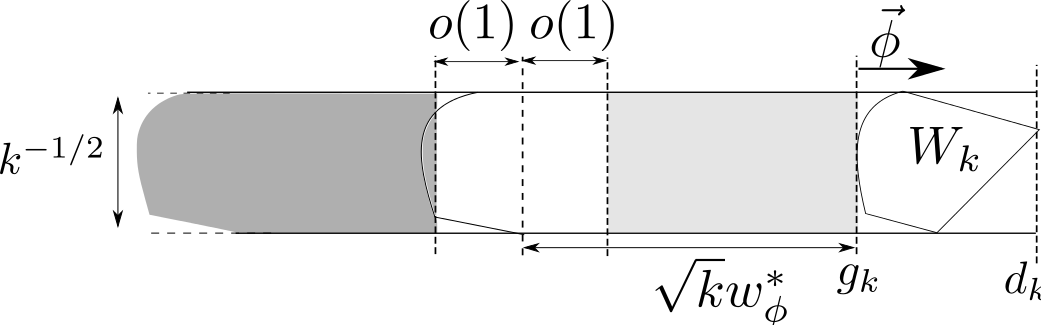}
\caption{The cut-off phenomenon in Proposition \ref{prop:cutoff} illustrated. With high probability, in $\G^k$, all points $\bar{b} = (b,\phi)$ with starting point $b$ in the light region will hit $W_k$, all those with $b$ in the dark region will not hit $W_k$, and there are no points in the white region.}
\label{fig:cutoff2}
\end{figure}

\begin{proof}[Proof of Theorem \ref{thm:main}]
Let $\{P_\phi, \phi \in A\}$ be independent Poisson line processes, 
with $P_\phi$ consisting of lines parallel to $\vect{\phi}$,
whose projection onto $\R \cdot\vect{\phi}^\perp$
form a Poisson point process with intensity $ w^*_\phi \sum_{Q\subset A, \phi\in Q} \mu_Q$. Note that $\G^\infty = \bigcup_{\phi \in A} P_\phi$.
By Proposition \ref{prop:cutoff}, as $k \to \infty$, the process of segments of $\G^k$ parallel to $\phi$ 
that intersect $\frac{1}{\sqrt{k}}W$
converges in probability to the
process of lines of $P_\phi$ that intersect $W$. As $A$ is a finite set, by union bound over $A$, with high probability, the events in Proposition \ref{prop:cutoff} hold simultaneously for all $\phi \in A$. Let
$$R'(\phi,k) = R(\phi,k) \backslash \bigcup_{\varphi \in A, \varphi \neq \phi} \left(R(\phi,k) \cap R(\varphi,k)\right).$$ 
Since $W$ is compact, the pairwise intersections $R(\phi,k) \cap R(\varphi,k)$ has area of order $O(k^{-1/2})$, while $R(\phi,k)$ has area of order $O(1)$. 
So with high probability, $\mathcal{P} \cap R(\phi,k) = \mathcal{P} \cap R'(\phi,k)$ for all $\phi \in A$. Since the regions $R'(\phi,k)$ are pairwise disjoint, the lines intersecting $\frac{1}{\sqrt{k}}W$ in $\G^k$ converges in probability to the intersection of $W$ and $\G^\infty$. That is,
$$\lim_{k \to \infty}\G^k \cap \frac{1}{\sqrt{k}}W \stackrel{P}{\to} \G^\infty \cap W. $$
As $\G^k$ is stationary, this implies
$$\lim_{k \to \infty}
\G^k\left( \frac 1 {k}\right) 
\cap W \stackrel{P}{\to} \G^\infty \cap W, $$
and this concludes the proof of Theorem \ref{thm:main}.
\end{proof}

\subsection{Proof of Proposition \ref{prop:concentrate}}

The proof is organized in a series of lemmas.
We start with a concentration result on Poisson point processes to be used
in the proofs.

\begin{lem}\label{lem:sup.bound}
Let $\mathcal{P}_k$ be a PPP with rate $k\lambda$. Let $R, S \subset \R^2$ be compact sets. Assume that $S$ has finite boundary, that is, $|S^t \backslash S| = O(t)$ as $t \to 0$, with $S^t = \{x \in \R^2: \|x - S\|_\infty \leq t\}$ where $\|y\|_\infty := \max\{|y_1|,|y_2|\}$ is the $L_\infty$-norm in $\R^2$. For $a \in R$, let $N_k(a) = \mathcal{P}_k (S + a)$ be the number of points of $\mathcal{P}_k$ in the set $S + a$. For any fixed $\epsilon > 0$, as $k \to \infty$, with high probability,
$$ \sup_{a \in R} \left|N_k(a) - k\lambda|S| \right| \leq O(\lambda|S|k^{1/2+\epsilon}). $$ 
In other words, with high probability,
$$ k\lambda|S| + O(\lambda|S|k^{1/2+\epsilon})\leq \inf_{a \in R}N_k(a)\leq \sup_{a \in R}N_k(a) \leq k\lambda|S| + O(\lambda|S|k^{1/2+\epsilon}). $$
\end{lem}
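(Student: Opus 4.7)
The strategy is a standard net argument: cover $R$ by a polynomial-size grid, bound the fluctuation of $N_k(a)$ pointwise on the grid via a Poisson Chernoff bound, and control the oscillation across grid cells using the boundary hypothesis on $S$.

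For a single $a \in R$, the count $N_k(a)$ is Poisson with mean $\mu := k\lambda|S|$, so the Chernoff (Bennett) bound for Poisson random variables yields
\begin{equation*}
\mathbb{P}\!\left(|N_k(a) - \mu| \geq \eta\right) \;\leq\; 2\exp\!\left(-\frac{\eta^2}{2(\mu+\eta)}\right).
\end{equation*}
Setting $\eta = C\lambda|S|\,k^{1/2+\epsilon}$, one has $\eta/\mu = Ck^{-1/2+\epsilon}$, so $\mu+\eta = O(\mu)$ and the exponent is of order $-\lambda|S|\,k^{2\epsilon}$. This decays faster than any polynomial in $k$.

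Next, I take a $\delta$-grid $\mathcal{N}_\delta \subset R$ with $\delta = k^{-1}$, so $|\mathcal{N}_\delta| = O(|R|\,k^2)$. A union bound over $\mathcal{N}_\delta$, combined with the tail estimate, gives that with probability $1 - O(k^2 \exp(-c\lambda|S|\,k^{2\epsilon}))$,
\begin{equation*}
\max_{a \in \mathcal{N}_\delta} |N_k(a) - \mu| \;\leq\; C\lambda|S|\,k^{1/2+\epsilon}.
\end{equation*}
To transfer this to all $a \in R$, observe that for any $a_i \in \mathcal{N}_\delta$ and $a$ in the $\delta$-cell around $a_i$, the symmetric difference $(S+a)\triangle(S+a_i)$ is contained in the $\delta$-thickening of $\partial (S+a_i)$, which by the boundary assumption $|S^t\setminus S|=O(t)$ has area $O(\delta) = O(k^{-1})$. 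Hence
\begin{equation*}
\sup_{\|a-a_i\|_\infty \leq \delta} |N_k(a)-N_k(a_i)| \;\leq\; \mathcal{P}_k(U_i),
\end{equation*}
where $U_i$ is a fixed set of area $O(k^{-1})$, so the right-hand side is Poisson with mean $O(1)$. A further union bound over the $O(k^2)$ cells, together with the Poisson Chernoff bound, shows that this oscillation is $O(\log k)$ uniformly in $a$ with high probability, which is certainly negligible compared to $k^{1/2+\epsilon}$. Combining the two steps delivers the claimed uniform bound.

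The main technical care is to balance $\delta$: it must be small enough that the cell-wise oscillation is absorbed into the error term, yet only polynomially small in $k$ so that the union bound over the $|R|\,\delta^{-2}$ grid points is killed by the super-polynomial Poisson tail. The choice $\delta = k^{-1}$ satisfies both, and any fixed $\epsilon > 0$ is accommodated because the Chernoff exponent $k^{2\epsilon}$ beats any power of $k$ coming from the grid cardinality.
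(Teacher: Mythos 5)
Your proof is correct and follows essentially the same route as the paper: a Chernoff bound for the Poisson counts at the points of a polynomially fine grid, a union bound over the grid, and control of the within-cell oscillation by the Poisson count of a boundary collar whose area is $O(\delta)$ by the finite-boundary assumption. The only (harmless) difference is the mesh: the paper takes $\delta = k^{-2}$ so that the per-cell oscillation term is negligible outright, while you take $\delta = k^{-1}$ and absorb an $O(\log k)$ oscillation, which in either case is swamped by the $k^{1/2+\epsilon}$ error term.
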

\begin{proof}
The proof is a union bound over a $\delta$-net.
Let $X$ be a Poisson random variable with mean $k\mu$.
By Chernoff's bound for Poisson random variables,
for $\epsilon > 0$, there exists a constant $C > 0$ such that
$$ \P(|X - k\mu| \geq (k\mu)^{1/2+\epsilon}) \leq \exp(-C(k\mu)^{2\epsilon}). $$
Set $X = N_k$ with $\mu = \lambda|S|$.
Cover $R$ by a grid where each square has side length at most
$\delta = k^{-2}$. Let $G$ be the set of center points of
squares which have non-empty intersection with $R$.
Associate to each point $a \in R$
the center $g(a) \in G$ of the square it belongs to
(squares can be taken closed on the left/bottom and
open on the right/top to avoid ties). Then, a.s.,
\begin{align}
 \sup_{a \in R} \left|N_k(a) - k\mu \right| &\leq \sup_{a \in R}
\left(\left|N_k(a) - N_k(g(a))\right| + \left| N_k(g(a)) - k\mu \right|\right)
\nonumber \\
&\leq \sup_{g \in G}\sup_{a: \|a-a_G\|_\infty < \delta/2}
\left|N_k(a) - N_k(g)\right|  + \left| N_k(g) - k\mu \right|. \label{eqn:sup.ag}
\end{align}
Since $\|a-g\|_\infty < \delta/2$, we have
$S^\delta + a_G \supseteq S + a$.
Let $N_k^\delta(a_G) = |\mathcal{P}_k \cap (S^\delta + a_G)|$. Then
$$ \sup_{a: \|a-a_G\|_\infty < \delta/2}\left|N_k(a) - N_k(a_G)\right| \leq  N_k^\delta(a_G) - N_k(a_G) =  |\mathcal{P}_k \cap (S^\delta \backslash S + a_G)|. $$
So $N_k^\delta(a_G) - N_k(a_G)$ is a Poisson random variable 
with mean at most $C'k\lambda \delta$ for some constant $C' > 0$,
thanks to the assumption on the boundary.
The cardinality of $G$ is at most $2|R|\delta^{-2}$. So by the union bound,
$$ \P(\sup_{a_G \in G}N_k^\delta(a_G) - N_k(a_G) > 0) \leq 2|R|\delta^{-2}(1-\exp(-C'k\lambda\delta)) = O(k/\delta) = O(k^{-1}),  $$ 
and
$$ \P(\sup_{a_G \in G} \left| N_k(a_G) - k\mu \right| \geq (k\mu)^{1/2+\epsilon}) \leq 2|R|\delta^{-2}\exp(-C(k\mu)^{2\epsilon}) = \exp(-O(k^{2\epsilon}) + 4\log k). $$ 
This together with the bound (\ref{eqn:sup.ag}) imply that w.h.p.
$$ \sup_{a \in R} \left|N_k(a) - k\mu \right| \leq \sup_{a_G \in G} (N_k^\delta(a_G) - N_k(a_G)) + \sup_{a_G \in G} \left| N_k(a_G) - k\mu \right| \leq (k\mu)^{1/2+\epsilon} = O(k^{1/2+\epsilon}) . $$
\end{proof}

Below is another auxiliary result, which are bounds on the function $w \mapsto \mathcal{E}(w,\phi)$ under small perturbations. They follow from the geometry of the regions $T^w_{\phi Q}$. 
\begin{lem}\label{lem:function.e}
Fix $i \in \{1, \ldots, M\}$. Fix a sequence of constants $0 < \delta_1 \leq \ldots \leq \delta_{i-1} < \delta_i \leq \delta_{i+1} \leq \ldots \leq \delta_M$.
Define weight vectors $w, w' \in \R^M$ with
$$ w_\ell = w^\ast_\ell + k^{\delta_\ell} \mbox{ for } \ell = 1, \ldots, i-1, \hspace{1em} w_\ell = w^\ast_i - k^{\delta_i} \mbox{ for } \ell \geq i, $$
and 
$$ w'_\ell = w^\ast_\ell, \mbox{ for } \ell = 1, \ldots, i-1,
\hspace{1em} w'_\ell = w^\ast_i, \mbox{ for } \ell \geq i. $$
Then for $j > i$,
$$ \mathcal{E}(w, \phi_j) = \mathcal{E}(w', \phi_j) + o(k^{\delta_i}) - Ck^{\delta_i} $$
for some constant $C > 0$. Similarly, if 
$$ w_\ell = w^\ast_\ell - k^{\delta_\ell} \mbox{ for } \ell = 1, \ldots, i-1, \hspace{1em} w_\ell = w^\ast_i + k^{\delta_i} \mbox{ for } \ell \geq i, $$
then for $j > i$,
$$ \mathcal{E}(w, \phi_j) = \mathcal{E}(w', \phi_j) - o(k^{\delta_i}) + Ck^{\delta_i} $$
for some constant $C > 0$. 
\end{lem}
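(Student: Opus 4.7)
My plan is to expand $\mathcal{E}(w,\phi_j) - \mathcal{E}(w',\phi_j)$ using formula~\eqref{eqn:defn.e} and estimate it term by term. Regrouping the double sum by $\varphi$, one obtains
\[
\mathcal{E}(w,\phi_j) - \mathcal{E}(w',\phi_j)
= \sum_{\varphi \in A\setminus\{\phi_j\}} C_\varphi \, |\sin(\phi_j-\varphi)| \, \Delta f_\varphi,
\]
where $C_\varphi := \sum_{Q:\, \varphi \in Q,\, \phi_j \notin Q}(\mu_{\phi_j\cup Q}+\mu_Q)$ is a non-negative constant independent of $k$, and $\Delta f_\varphi := f(w_{\phi_j}, w_\varphi) - f(w'_{\phi_j}, w'_\varphi)$ with $f(a,b) := \tfrac{1}{2}\min(a,b)\bigl(2a - \min(a,b)\bigr)$, so that $|T^{a,b}_{\phi_j\varphi}| = f(a,b)\,|\sin(\phi_j-\varphi)|$. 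The task then reduces to estimating $\Delta f_\ell := \Delta f_{\phi_\ell}$ for each $\ell \neq j$, which I split according to $\ell \geq i$ or $\ell < i$.

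For $\ell \geq i$ with $\ell \neq j$, both base and perturbed points lie on the diagonal $a=b$ and the region is an isoceles triangle; a direct calculation gives $\Delta f_\ell = \tfrac{1}{2}[(w^\ast_i - k^{\delta_i})^2 - (w^\ast_i)^2] = -w^\ast_i k^{\delta_i} + o(k^{\delta_i})$. For $\ell < i$ in the generic case $w^\ast_\ell < w^\ast_i$, asymptotics in $k$ place both points in the trapezium regime $b<a$ where $f(a,b) = ab - \tfrac{1}{2}b^2$, with $\partial_a f = w^\ast_\ell$ and $\partial_b f = w^\ast_i - w^\ast_\ell$ at the base point. A first-order Taylor expansion then yields
\[
\Delta f_\ell = w^\ast_\ell \cdot (-k^{\delta_i}) + (w^\ast_i-w^\ast_\ell) \cdot k^{\delta_\ell} + o(k^{\delta_i}) = -w^\ast_\ell k^{\delta_i} + o(k^{\delta_i}),
\]
where I use $\delta_\ell < \delta_i$ to absorb the middle term into the error. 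The degenerate subcase $w^\ast_\ell = w^\ast_i$ stays on the diagonal and reduces to the triangle computation above.

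Summing over all $\ell \neq j$ gives $\mathcal{E}(w,\phi_j) - \mathcal{E}(w',\phi_j) = -Ck^{\delta_i} + o(k^{\delta_i})$ with
\[
C \;=\; \sum_{\ell \neq j} w^\ast_{\min(\ell,i)} \, |\sin(\phi_j - \phi_\ell)| \, C_{\phi_\ell} \;>\; 0,
\]
strict positivity being guaranteed by the no-parallel-line assumption, which ensures that at least one $\phi_\ell$ contributes with positive intensity and nonzero sine. The second assertion is proved by repeating the calculation with the signs of all perturbations reversed: each $\Delta f_\ell$ flips sign, yielding $+Ck^{\delta_i} + o(k^{\delta_i})$ with the same $C$. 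The principal obstacle I anticipate is the bookkeeping across the kink of $f$ at $a=b$: one must verify that the one-sided partials agree on the diagonal (they do, both sides giving $\partial_a f = a$ and $\partial_b f = 0$) so that the linearization is unambiguous, and confirm that the quadratic remainders of orders $k^{2\delta_i}$, $k^{\delta_i+\delta_\ell}$, and $k^{2\delta_\ell}$ are absorbed into the $o(k^{\delta_i})$ error under the implicit scaling convention of the statement. Identifying which $\phi_\ell$ falls on which side of the kink is precisely what drives the $\min(\ell,i)$ pattern in the formula for $C$.
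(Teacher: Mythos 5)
Your argument is essentially the paper's own proof, only carried out more explicitly: the paper likewise splits the difference $\mathcal{E}(w,\phi_j)-\mathcal{E}(w',\phi_j)$ into per-direction area changes (via (\ref{eqn:defn.e2}) rather than (\ref{eqn:defn.e})), notes that the $\ell<i$ perturbations contribute $o(k^{\delta_i})$ while the $\ell\geq i$ perturbations contribute a negative term of order $k^{\delta_i}$, and sums; your explicit $f(a,b)$, the Taylor step across the triangle/trapezium regimes, and the closed form for $C$ are a finer-grained version of the same computation. The scaling caveat you flag at the end is real but harmless in context: the lemma is only ever invoked for weights of the form $w^\ast_\ell\sqrt{k}\pm k^{\delta_\ell}$ (i.e.\ for $\mathcal{E}(\sqrt{k}\cdot w,\phi_j)$ with errors $o(k^{1/2+\delta_i})-Ck^{1/2+\delta_i}$), where $\delta_\ell<1/2$ makes all quadratic remainders $k^{2\delta_i}$, $k^{\delta_i+\delta_\ell}$, $k^{2\delta_\ell}$ of lower order, exactly the reading the paper's own (terser) proof implicitly uses.
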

\begin{proof}
Consider the difference $|T^w_{\phi_j Q}| - |T^{w'}_{\phi_j Q}|$ for each subset $Q \subseteq A \backslash \{\phi_j\}$. For coordinates $\ell < i$, $w_\ell - w'_\ell = k^{\delta_\ell} = o(k^{\delta_i})$, and thus this contributes a positive term of order $o(k^{\delta_i})$ to this difference. Now consider coordinates $\ell \geq i$. Then $w_\ell - w'_\ell = -k^{\delta_i}$, and thus this contributes a negative term of order $O(k^{\delta_i})$. Sum over all such subsets $Q$ and use (\ref{eqn:defn.e2}) to obtain
$$\mathcal{E}(w, \phi_j) - \mathcal{E}(w', \phi_j) =  o(k^{\delta_i}) - Ck^{\delta_i}$$
for some constant $C > 0$. 
The second part follows similarly.
\end{proof}

Fix $k \in \mathbb{N}$, a motorcycle $\bar{b} = (b,\phi)$,
with $b \in \sqrt{k} \cdot R$, some distance $y > 0$, and angle 
$\varphi \in A, \varphi \neq \phi$.
Write $K(\bar{b},y,\varphi,k)$ for the number of
{\em would-be killers} of $\bar{b}$  on $b + [0,y\sqrt{k}\cdot\vect{\phi}]$
which travel in direction $\varphi \in A$.
That is, these are motorcycles $\bar{a} = (a,\varphi)$
whose path in $\G^k$ would cross $b + [0,y\sqrt{k}\cdot\vect{\phi}]$,
and $\bar{a}$ would have killed $\bar{b}$ if $\bar{b}$ had enough lives
to meet it, which will happen for large enough $k$. Our goal is to give a tight bound of the kind
\begin{equation}\label{eqn:sandwich}
\underline{K}(\bar{b},y,\varphi,k) \leq K(\bar{b},y,\varphi,k)\leq \overline{K}(\bar{b},y,\varphi,k) \hspace{1em} \mbox{for all $b \in \sqrt{k} \cdot R$, w.h.p.,}
\end{equation}
for some appropriately defined $\underline{K}$ and $\overline{K}$. 
Summing over $\varphi \in A \backslash \{\phi\}$ and taking a union bound, 
one obtains upper and lower bounds for the number of would-be killers
$K(\bar{b},y,k)$ of $b$  on $b + [0,y\sqrt{k}\cdot\vect{\phi}]$,
$$ \underline{K}(\bar{b},y,k) \leq K(\bar{b},y,k)\leq \overline{K}(\bar{b},y,k) \hspace{1em}
\mbox{for all $b \in \sqrt{k} \cdot R$, w.h.p.}, $$
where 
\begin{align}
\underline{K}(\bar{b},y,k) & := \sum_{\varphi \in A \backslash \{\phi\}} \underline{K}(\bar{b},y,\varphi,k) \\ 
K(\bar{b},y,k) & := \sum_{\varphi \in A \backslash \{\phi\}} K(\bar{b},y,\varphi,k), \\
\overline{K}(\bar{b},y,k) & := \sum_{\varphi \in A \backslash \{\phi\}} \overline{K}(\bar{b},y,\varphi,k). 
\end{align}

The definitions of the quantities $\underline{K}(\bar{b},y,\varphi,k)$ and $\overline{K}(\bar{b},y,\phi,k)$ are given below, depending on whether $w^\ast_\phi$ or $w^\ast_\varphi$ is greater. Let us motivate these definitions. For $b \in \R^2$, let $\tau_b \circ T^{w_\phi,w_\varphi}_{\phi\varphi}$ be the polygon $T^{w_\phi,w_\varphi}_{\phi\varphi}$ translated so that the vertex $-w_\phi \cdot \vect{\phi}$ is now at $b$. 
Say that $\bar{a} = (a,\varphi)$ is a {\em potential killer}
of $\bar{b}$ in direction $\varphi$ on $b + [0,y\sqrt{k}\cdot\vect{\phi}]$
if, provided that both have enough lives to progress of $y\sqrt{k}$,
the path of $\bar{a}$ will intersect that of $\bar{b}$ on the line
segment $b + [0,y\sqrt{k}\cdot\vect{\phi}]$, and at this intersection,
$\bar{a}$ will kill $\bar{b}$.
Potential killers should
not to be confused with would-be killers - the latter pertain to
properties of ${\mathcal G}^k$, whereas the former pertain to some
geometric properties associated with $y>0$:
$\bar{a} = (a,\varphi)$ is a potential killer of
$\bar{b} = (b,\phi)$ if and only if
$a \in \tau_b \circ T^{y\sqrt{k},y\sqrt{k}}_{\phi\varphi}$.  

So the number of potential killers of $\bar{b}$
on $b + [0,y\sqrt{k}\cdot\vect{\phi}]$ in the direction $\varphi$ is
\begin{equation}\label{eqn:tilde.k}
\tilde{K}(\bar{b},y,\varphi,k) = \sum_{Q \subseteq A \backslash \{\varphi\}}
\mathcal{P}_{\varphi \cup Q} 
(\tau_b \circ T^{y\sqrt{k},y\sqrt{k}}_{\phi \varphi}),
\end{equation}
whereas the total number of potential killers is
\begin{eqnarray}
\label{eqn:tilde.ki}
\tilde{K}(\bar{b},y,k) & = & \sum_{\varphi\in A \setminus \phi}
\sum_{Q \subseteq A \backslash \{\varphi\}}
\mathcal{P}_{\varphi \cup Q} 
(\tau_b \circ T^{y\sqrt{k},y\sqrt{k}}_{\phi \varphi})
\label{eqn:tilde.ki-fl}
\\
& = &
\sum_{Q \subseteq A \backslash \{\phi\}}
\sum_{\varphi \in Q} 
(\mathcal{P}_Q
+ \mathcal{P}_{Q\cup\phi}) (\tau_b \circ T^{\sqrt{k}y,\sqrt{k}y}_{\phi\varphi}).
\label{eqn:tilde.ki-sl}
\end{eqnarray}
Clearly for all $y > 0$,
\begin{equation}\label{eqn:tilde.k.upper}
K(\bar{b},y,\varphi,k) \leq \tilde{K}(\bar{b},y,\varphi,k), \hspace{1em} \mbox{ for all } b \in \sqrt{k} \cdot R,\ \varphi \in A \backslash \{\phi\}.
\end{equation}
Fix a sequence of $\{\delta_\phi, \phi \in A\} \subset (0,1/2)$
such that $w^\ast_\phi < w^\ast_\varphi$ if and only if
$\delta_\phi < \delta_\varphi$ for all pairs $\phi,\varphi \in A$.
As we shall see, $\delta_\phi$ acts as an error bound for $L^k((b,\phi))$.
We are now in a position to define
\begin{eqnarray}
\underline{K}(\bar{b},y,\varphi,k) &:= &
\begin{cases}
\sum_{Q \subseteq A \backslash \{\varphi\}} 
\mathcal{P}_{\varphi \cup Q} 
(\tau_b \circ T^{y\sqrt{k},\min(y\sqrt{k}, w^\ast_\phi\sqrt{k} - k^{\delta_\phi})}_{\phi\varphi}) 
& \text{if  $w^\ast_\phi \leq w^\ast_\varphi$},
\label{eqn:under.equal} \\
\sum_{Q \subseteq A \backslash \{\varphi\}} 
\mathcal{P}_{\varphi \cup Q} (\tau_b \circ T^{y\sqrt{k},\min(y\sqrt{k}, w^\ast_\varphi\sqrt{k} - k^{\delta_\varphi})}_{\phi\varphi})
& \text{ if $w^\ast_\phi > w^\ast_\varphi$}, \label{eqn:under.above} 
\end{cases}\\
\overline{K}(\bar{b},y,\varphi,k) &:=&
\begin{cases}
\sum_{Q \subseteq A \backslash \{\varphi\}} \mathcal{P}_{\varphi \cup Q} (\tau_b \circ T^{y\sqrt{k},\min(y\sqrt{k}, w^\ast_\phi\sqrt{k} + k^{\delta_\phi})}_{\phi\varphi}) & \text{ if  $w^\ast_\phi \leq w^\ast_\varphi$}, \label{eqn:over.equal} \\
\sum_{Q \subseteq A \backslash \{\varphi\}} \mathcal{P}_{\varphi \cup Q} (\tau_b \circ T^{y\sqrt{k},\min(y\sqrt{k}, w^\ast_\varphi\sqrt{k} + k^{\delta_\varphi})}_{\phi\varphi}) & \text{ if  $w^\ast_\phi > w^\ast_\varphi$}. \label{eqn:over.above}
\end{cases}
\end{eqnarray}
These definitions are illustrated by Fig. \ref{fig:cutoff}.
Note that all points counted in these two definitions are potential killers of $\bar b$ for distance $y$.
We now show that these functions satisfy (\ref{eqn:sandwich}).

\begin{figure}[h]
\includegraphics{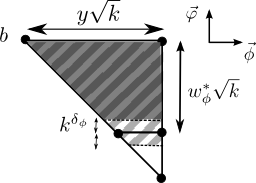}
\caption{Definition (\ref{eqn:under.equal}) illustrated for $w^\ast_\phi \leq w^\ast_\varphi$. We claim that with high probability, in $\G^k$, all motorcycles $\bar{a} = (a,\varphi)$ with $a$ in the gray region will cross the line $b + [0,y\sqrt{k}]$, and all such motorcycles with $a$ in the white region will not. We define $\underline{K}(b,y,\varphi)$ to be the number of sites of the PPP in the gray region which spawn motorcycles that travel in direction $\varphi$, and $\overline{K}(b,y,\varphi)$ to be that number for the striped region. The case $w^\ast_\phi > w^\ast_\varphi$ is the same figure with $w^\ast_\varphi\sqrt{k}$ replacing $w^\ast_\phi\sqrt{k}$.}
\label{fig:cutoff}
\end{figure}

\begin{lem}\label{lem:keybound}
Fix $k \in \mathbb{N}$, a motorcycle $\bar{b} = (b,\phi)$ with $b \in \sqrt{k} \cdot R$, angle $\varphi \in A, \varphi \neq \phi$. 
For fixed sequences $\underline{y} = \underline{y}(k) = w^\ast_\phi + o(1) > w^\ast_\phi$, $\overline{y} = \overline{y}(k) = w^\ast_\phi - o(1) < w^\ast_\phi$, as $k \to \infty$,
\begin{align}
\underline{K}(\bar{b},\underline{y},\varphi,k) \leq K(\bar{b},\underline{y},\varphi,k) & \mbox{ for all $b \in \sqrt{k} \cdot R$ w.h.p.,} \label{eqn:sandwich.under}\\
K(\bar{b},\overline{y},\varphi,k) \leq \overline{K}(\bar{b},\overline{y},\varphi,k)& \mbox{ for all $b \in \sqrt{k} \cdot R$ w.h.p..} \label{eqn:sandwich.over}
\end{align}
In particular, for $\underline{y} := w_\phi^\ast + k^{\delta_\phi-1/2}$ and $\overline{y} = w_\phi^\ast - k^{\delta_\phi-1/2}$, then
\begin{align}
\underline{K}(\bar{b},\underline{y},k) \geq k &\hspace{1em} \mbox{for all $b \in \sqrt{k} \cdot R$, w.h.p.,} \label{eqn:under.y} \\
\overline{K}(\bar{b},\overline{y},k) \leq k &\hspace{1em} \mbox{for all $b \in \sqrt{k} \cdot R$, w.h.p..} \label{eqn:over.y}
\end{align}
\end{lem}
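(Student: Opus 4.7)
The plan is to prove Lemma \ref{lem:keybound} by induction on $\phi$, ordering the angles as $\phi_1,\ldots,\phi_M$ so that $w^\ast_{\phi_1}\le\cdots\le w^\ast_{\phi_M}$, and assuming at the step $\phi=\phi_i$ the quantitative concentration $|L^k((a,\phi_j))-\sqrt k\,w^\ast_{\phi_j}|\le k^{\delta_{\phi_j}}$ uniformly in $a\in\sqrt k\cdot R$, for all $j<i$. I would first treat the mean-based claims (\ref{eqn:under.y})-(\ref{eqn:over.y}), which do not rely on the induction in $\varphi$, and then deduce the sandwich claims (\ref{eqn:sandwich.under})-(\ref{eqn:sandwich.over}) using the inductive hypothesis.

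For (\ref{eqn:under.y})-(\ref{eqn:over.y}), Campbell's formula combined with the rearrangement from (\ref{eqn:defn.e}) to (\ref{eqn:defn.e2}) yields
\[
\E\underline K(\bar b,\underline y,k)=\mathcal E(w^-,\phi),\qquad \E\overline K(\bar b,\overline y,k)=\mathcal E(w^+,\phi),
\]
where $w^\pm\in\R^M$ are the perturbed weight vectors dictated by the $\min$'s in the definitions of $\underline K,\overline K$ at the specific choices $\underline y=w^\ast_\phi+k^{\delta_\phi-1/2}$ and $\overline y=w^\ast_\phi-k^{\delta_\phi-1/2}$. A direct trapezium-area computation in the spirit of Lemma \ref{lem:function.e}, carried out at the index of $\phi$ rather than at a later index, gives
\[
\mathcal E(w^-,\phi)\ge k+Ck^{1/2+\delta_\phi},\qquad \mathcal E(w^+,\phi)\le k-Ck^{1/2+\delta_\phi}
\]
for some $C>0$ (positive thanks to the no-parallel-line assumption). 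Lemma \ref{lem:sup.bound} applied to each Poisson summand then delivers uniform deviations of order $O(k^{1/2+\epsilon})$ over $b\in\sqrt k\cdot R$; choosing any $\epsilon<\delta_\phi$ makes the deterministic margin dominate the fluctuation, closing (\ref{eqn:under.y})-(\ref{eqn:over.y}).

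For (\ref{eqn:sandwich.under}), each site $a$ counted by $\underline K(\bar b,\underline y,\varphi,k)$ is such that the prospective intersection of the paths of $\bar a$ and $\bar b$ lies at travel distance at most $w^-_\varphi$ from $a$, so it is a genuine would-be killer of $\bar b$ as soon as $L^k(\bar a)\ge w^-_\varphi$ uniformly w.h.p. If $w^\ast_\varphi<w^\ast_\phi$, the truncation $w^-_\varphi=\sqrt k\,w^\ast_\varphi-k^{\delta_\varphi}$ is exactly the lower endpoint of the inductive concentration interval for $\varphi$ and this is immediate. If $w^\ast_\varphi\ge w^\ast_\phi$, the truncation is $w^-_\varphi=\sqrt k\,w^\ast_\phi-k^{\delta_\phi}$, and I would obtain the survival of $\bar a$ by invoking (\ref{eqn:over.y}) applied to $\bar a$ (in the role of $\bar b$) at a scale shorter than $w^\ast_\varphi$, taking advantage of the fact that (\ref{eqn:over.y}) does not need the induction in $\varphi$. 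The dual (\ref{eqn:sandwich.over}) is verified similarly: in the regime $w^\ast_\varphi\ge w^\ast_\phi$ the $\min$ in (\ref{eqn:over.equal}) makes $\overline K=\tilde K$, so $K\le\tilde K=\overline K$ is automatic; in the regime $w^\ast_\varphi<w^\ast_\phi$ the upper half of the inductive concentration for $\varphi$ forbids $L^k(\bar a)>\sqrt k\,w^\ast_\varphi+k^{\delta_\varphi}$, and hence any site outside the $\overline K$-region cannot be the starting point of a would-be killer.

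The main obstacle is the sandwich step in the regime $w^\ast_\varphi\ge w^\ast_\phi$: the inductive hypothesis for $\varphi$ is not yet in place, and a naive potential-killer bound $K\le\tilde K$ is too lossy because $\E\tilde K(\bar a,\cdot,k)$ may exceed $k$ in the presence of directions with very small $w^\ast$. The resolution hinges on the fact that (\ref{eqn:over.y}), being purely a Poisson mean-concentration statement, can be turned around to produce a quantitative lower bound on $L^k(\bar a)$; closing the loop consistently requires packaging the argument as a joint induction over Proposition \ref{prop:concentrate} and Lemma \ref{lem:keybound} at each layer, rather than proving the two statements sequentially.
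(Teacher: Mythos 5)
Your overall architecture (ordering the angles by $w^\ast$, inducting along that order, computing $\E\underline{K}$ and $\E\overline{K}$ by Campbell's formula, comparing to $k$ via a perturbation of $\mathcal{E}$ as in Lemma \ref{lem:function.e}, and using Lemma \ref{lem:sup.bound} with $\epsilon<\delta_\phi$ for uniform concentration) matches the paper, and your treatment of the easy halves of the sandwich (the case $w^\ast_\varphi\ge w^\ast_\phi$ of (\ref{eqn:sandwich.over}), where $\overline{K}=\tilde{K}$, and the case $w^\ast_\varphi<w^\ast_\phi$ handled by the inductive concentration for $\varphi$) is also the paper's argument. The genuine gap is exactly where you flag it: the lower bound (\ref{eqn:sandwich.under}) when $w^\ast_\varphi\ge w^\ast_\phi$, including the base case $\phi=\phi_1$. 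There you "obtain the survival of $\bar a$ by invoking (\ref{eqn:over.y}) applied to $\bar a$", but (\ref{eqn:over.y}) is a statement about the explicit Poisson functional $\overline{K}(\bar a,\cdot,k)$, not about the actual life-loss count $K(\bar a,\cdot,k)$; converting one into the other is precisely the sandwich inequality for the pair $(\varphi,\cdot)$, which at layer $i$ is available only for killer directions $\phi_\ell$ with $\ell<i$. So as written the step is circular, and your closing remark that a "joint induction" resolves it does not identify the mechanism that actually breaks the circle.

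The missing idea is the hybrid majorant $\tilde{K}^{i,j}$ of (\ref{eqn:tilde.k.j}): to show a potential killer $\bar a=(a,\phi_j)$, $j>i$, survives to its meeting with $\bar b$, one only needs survival over a distance $d(\bar a,\bar b)\sqrt k\le w^\ast_i\sqrt k-k^{\delta_i}$, and over that range one bounds its would-be killers by $\overline{K}(\bar a,\cdot,\phi_\ell,k)$ for the already-handled directions $\ell<i$ (induction hypothesis) and by the \emph{crude} potential-killer count $\tilde{K}(\bar a,\cdot,\phi_\ell,k)$ for $\ell\ge i$. The crude count is not lossy here, because for directions with $w^\ast_\ell\ge w^\ast_i$ the truncation in $\overline{K}$ is vacuous at distances below $w^\ast_i\sqrt k$; the resulting mean is $\mathcal{E}$ evaluated at the vector with coordinates $w^\ast_\ell+k^{\delta_\ell-1/2}$ ($\ell<i$) and $\approx w^\ast_i-k^{\delta_i-1/2}$ ($\ell\ge i$), which is at most $k-Ck^{1/2+\delta_i}$ by the staged construction of $w^\ast$ in Lemma \ref{lem:constants} together with Lemma \ref{lem:function.e}; Lemma \ref{lem:sup.bound} then gives the uniform bound (this is Lemma \ref{lem:base.case.1} for $i=1$, where the crude count alone suffices since $w^\ast_1$ is minimal, and the general induction step otherwise). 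In short, your observation that "$\E\tilde K$ may exceed $k$" is only an obstruction at distances above the smallest not-yet-handled $w^\ast$, and the whole point of the layered induction is that such distances never need to be considered at layer $i$; without making this quantitative step explicit, the proposal does not prove (\ref{eqn:sandwich.under}) in the hard regime, and hence neither (\ref{eqn:under.y})--(\ref{eqn:over.y}) can be propagated into the concentration of $L^k$ needed at the next layer.
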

Order the angles in $A$ as $\phi_1, \ldots, \phi_{M'}$ such that the corresponding values $w^\ast_\phi$ are non-decreasing, that is, if $i < j$, then $w^\ast_i = w^\ast_{\phi_i} \leq w^\ast_j = w^\ast_{\phi_j}$. We shall prove Lemma \ref{lem:keybound} by an inductive argument along $i = 1, \ldots, |A|$. Each step in the induction involves fixing $i$, and establishing (\ref{eqn:sandwich.under}) and (\ref{eqn:sandwich.over}) for pairs $(\phi,\varphi) = (\phi_i,\phi_j)$ and $(\phi,\varphi) = (\phi_j,\phi_i)$, for all $j = 1, \ldots |A|$ such that $j > i$, and then use this and the induction hypothesis to prove (\ref{eqn:under.y}) and (\ref{eqn:over.y}) for $\phi = \phi_i$. 
For clarity, we restate each case of Lemma \ref{lem:keybound} in the induction proof as a lemma in itself.

\begin{lem}\label{lem:base.case.1}
For all $j > 1$,
Equations (\ref{eqn:sandwich.under}) and (\ref{eqn:sandwich.over}) hold for
$\phi = \phi_1$ and $\varphi = \phi_j$. 
\end{lem}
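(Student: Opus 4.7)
The plan is to dispatch the upper bound (\ref{eqn:sandwich.over}) as a deterministic identity, and to do the real work on the lower bound (\ref{eqn:sandwich.under}) via a quadratic-scaling argument for $\mathcal{E}$ together with a uniform Chernoff estimate supplied by Lemma \ref{lem:sup.bound}. For (\ref{eqn:sandwich.over}): because $\phi_1$ minimises $w^\ast$, one has $w^\ast_{\phi_1}\leq w^\ast_{\phi_j}$, so we fall into the first branch of the definition of $\overline{K}$. With $\overline{y}\sqrt{k}=w^\ast_{\phi_1}\sqrt{k}-k^{\delta_{\phi_1}}$ the $\min$ reduces to $\overline{y}\sqrt{k}$, hence the region coincides with the full potential-killer triangle $T^{\overline{y}\sqrt{k},\overline{y}\sqrt{k}}_{\phi_1\phi_j}$. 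Thus $\overline{K}(\bar b,\overline y,\phi_j,k)=\tilde K(\bar b,\overline y,\phi_j,k)$, and the containment $K\leq \tilde K$ from (\ref{eqn:tilde.k.upper}) closes the case with no probability involved.

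For (\ref{eqn:sandwich.under}), the goal is to show that, w.h.p.\ uniformly over $b\in \sqrt{k}\cdot R$, every site $a$ in the gray trapezium $\tau_b\circ T^{\underline{y}\sqrt{k},\,w^\ast_{\phi_1}\sqrt{k}-k^{\delta_{\phi_1}}}_{\phi_1\phi_j}$ spawning a motorcycle in direction $\phi_j$ is a genuine would-be killer of $\bar b$. For such an $\bar a=(a,\phi_j)$ let $p=p(a,b)$ be the crossing of its line with $\bar b$'s path; trapezium geometry already supplies $|a-p|\leq w^\ast_{\phi_1}\sqrt{k}-k^{\delta_{\phi_1}}$ and $|a-p|<|b-p|$, so the only remaining point is to verify $L^k(\bar a)\geq |a-p|$, i.e.\ that $\bar a$ still has a life upon reaching $p$ in $\G^k$. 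Since $K(\bar a,\cdot,k)\leq \tilde K(\bar a,\cdot,k)$, it suffices to check the stronger event $\tilde K(\bar a,|a-p|/\sqrt{k},k)<k$.

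Set $z^\ast:=w^\ast_{\phi_1}-k^{\delta_{\phi_1}-1/2}$, the worst case for $|a-p|/\sqrt{k}$. The mean of $\tilde K(\bar a,z^\ast,k)$ equals $k\,\mathcal{E}(z^\ast\cdot\mathbf{1},\phi_j)$; combining the quadratic scaling $\mathcal{E}(cw,\phi)=c^2\mathcal{E}(w,\phi)$ with the inequality $\mathcal{E}(w^\ast_{\phi_1}\cdot\mathbf{1},\phi_j)\leq 1$ from (\ref{eqn:wstar1}) yields a deterministic deficit
\[ k-k\,\mathcal{E}(z^\ast\cdot\mathbf{1},\phi_j) \;\geq\; C\, k^{\delta_{\phi_1}+1/2} \]
for some $C>0$ and $k$ large (the lower-order correction $O(k^{2\delta_{\phi_1}})$ loses because $\delta_{\phi_1}<1/2$). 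Lemma \ref{lem:sup.bound}, applied separately to each of the finitely many sliding triangles $\tau_a\circ T^{z^\ast\sqrt{k},z^\ast\sqrt{k}}_{\phi_j\psi}$, $\psi\in A\setminus\{\phi_j\}$, furnishes uniform-in-$a$ Poisson fluctuations of order $k^{1/2+\epsilon}$; choosing $\epsilon<\delta_{\phi_1}$ lets the deficit dominate, so w.h.p.\ $\sup_a \tilde K(\bar a,z^\ast,k)<k$. Monotonicity of $\tilde K(\bar a,\cdot,k)$ in its second argument upgrades this to any value $\leq z^\ast$, and a standard $k^{-2}$-net union bound in $b$ (as in the proof of Lemma \ref{lem:sup.bound}) delivers (\ref{eqn:sandwich.under}).

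The main obstacle is the matching of scales in the last step: the $\mathcal{E}$-subcriticality margin of order $k^{\delta_{\phi_1}+1/2}$ must strictly dominate the Poisson concentration error of order $k^{1/2+\epsilon}$, which is precisely why the $\delta_\phi$'s were constrained to $(0,1/2)$ and why we retain the freedom to shrink $\epsilon$. The argument remains valid in the edge case $\mathcal{E}(w^\ast_{\phi_1}\cdot\mathbf{1},\phi_j)=1$ (not excluded by (\ref{eqn:wstar1}) for $j>1$), since the quadratic expansion still produces a strictly positive linear term in the small parameter $k^{\delta_{\phi_1}-1/2}$.
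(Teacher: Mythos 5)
Your argument is correct and follows essentially the same route as the paper: the upper bound is the deterministic identity $\overline{K}=\tilde{K}\geq K$, and the lower bound reduces to the uniform estimate $\sup_a \tilde{K}(\bar a, w^\ast_1-k^{\delta_1-1/2},k)\leq k$, obtained from the quadratic scaling of $\mathcal{E}$, the normalization (\ref{eqn:wstar1}), and the Chernoff-type concentration of Lemma \ref{lem:sup.bound} with $\epsilon<\delta_1$. The only cosmetic differences are that the paper makes the enlarged compact set $R'$ (absorbing the $b$-dependence of the trapezia) explicit, whereas your extra net/union bound in $b$ is redundant since Lemma \ref{lem:sup.bound} already gives the supremum over the enlarged region.
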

\begin{proof}
By definition, $w^\ast_\phi = w^\ast_1$ and $w^\ast_\varphi = w^\ast_j$. As $w^\ast_1 \leq w^\ast_j$ and $\overline{y} \leq w^\ast_1$, by the definition of $\overline{K}$ in (\ref{eqn:over.equal}),
$$\overline{K}(\bar{b},\overline{y},\phi_j,k) = \sum_{Q \subseteq A \backslash \{\phi_j\}} \mathcal{P}_{\phi_j \cup Q} (\tau_b \circ T^{\overline{y}\sqrt{k},\overline{y}\sqrt{k}}_{\phi_1\phi_j}) = \tilde{K}(\bar{b},\overline{y},\phi_j,k). $$
So $\overline{K}(\bar{b},\overline{y},\phi_j,k)$
is trivially an upper-bound for $K(\bar{b},\overline{y},\varphi,k)$ by (\ref{eqn:tilde.k.upper}). This proves (\ref{eqn:sandwich.over}). 

Now consider the lower bound (\ref{eqn:sandwich.under}).  Since $\underline{y} \geq w^\ast_1$, by the definition of $\underline{K}$ in (\ref{eqn:under.above}), 
$$
\underline{K}(\bar{b},\underline{y},\phi_j,k) = \sum_{Q \subseteq A \backslash \{\phi_j\}} \mathcal{P}_{\phi_j \cup Q} (\tau_b \circ T^{\underline{y}\sqrt{k},w^\ast_1\sqrt{k} - k^{\delta_1}}_{\phi_1\phi_j}).$$
For all potential killers $\bar{a} = (a,\varphi)$
of $\bar b$ for $\underline y$,
define $d(\bar{a},\bar{b})$ to be the positive real number
such that $d(\bar{a},\bar{b})\sqrt{k}$ is the distance
$\bar{a}$ has to travel to meet the path of $\bar{b}$.
In order to prove (\ref{eqn:sandwich.under}),
it is enough to show that all potential killers
$\bar{a} = (a,\phi_j)$ with 
$d(\bar{a},\bar{b})\sqrt{k} < w^\ast_1\sqrt{k} - k^{\delta_1}$
are would-be killers of $\bar b$ with high probability.
Fix such a motorcycle~$\bar{a}$.
Until its supposed meeting with $\bar{b}$, $\bar{a}$
can only be killed at most $\tilde{K}(\bar{a},d(\bar{a},\bar{b}),k)$ times.
So it is enough to show that
\begin{equation}\label{eqn:keybound}
\sup_{b \in \sqrt{k} \cdot R}\sup_{a \in \tau_b \circ T^{\underline{y}\sqrt{k},w^\ast_1\sqrt{k}-k^{\delta_1}}_{\phi_1\phi_j}} \tilde{K}(\bar{a},d(\bar{a},\bar{b}),k) \leq k \hspace{1em} \mbox{w.h.p.}
\end{equation}
From (\ref{eqn:tilde.ki-sl}), for all $z > 0$, 
\begin{equation}\label{eqn:bar.k}
\tilde{K}(\bar{a},z,k) = 
\sum_{Q \subseteq A \backslash \{\phi_j\}}
\sum_{\psi \in Q}
(\mathcal{P}_Q +\mathcal{P}_{Q\cup\phi_j})(\tau_a \circ T^{\sqrt{k}z,\sqrt{k}z}_{\phi_j\psi}). 
\end{equation}
Set $R_k' := \sqrt{k} \cdot R \oplus \tau_0 \circ T^{\underline{y}\sqrt{k},w_1^\ast\sqrt{k}-k^{\delta_1}}_{\phi_1\phi_j},$
where $\oplus$ denotes the Minkowski sum. We have
\begin{align}
& \sup_{b \in \sqrt{k} \cdot R}\sup_{a \in \tau_b \circ T^{\underline{y}\sqrt{k},w_1^\ast\sqrt{k}-k^{\delta_1}}_{\phi_1\phi_j}} \tilde{K}(\bar{a},d(\bar{a},\bar{b}),k) \nonumber\\
=& \sup_{a \in R_k'}\sum_{Q \subseteq A \backslash \{\phi_j\}} \sum_{\psi \in Q}(\mathcal{P}_Q +\mathcal{P}_{Q\cup\phi_j})(\tau_a \circ T^{d(\bar{a},\bar{b})\sqrt{k},d(\bar{a},\bar{b})\sqrt{k}}_{\phi_j\psi}) & \mbox{by (\ref{eqn:bar.k})} \nonumber\\ 
< & \sup_{a \in R_k'}\sum_{Q \subseteq A \backslash \{\phi_j\}} \sum_{\psi \in Q} (\mathcal{P}_Q + \mathcal{P}_{Q\cup\phi_j})(\tau_a \circ T^{w^\ast_1\sqrt{k} - k^{\delta_1}, w^\ast_1\sqrt{k} - k^{\delta_1}}_{\phi_j\psi}) & \mbox{as }d(\bar{a},\bar{b})\sqrt{k} \leq w^\ast_1\sqrt{k} - k^{\delta_1} \nonumber\\
\leq&  \sum_{Q \subseteq A \backslash \{\phi_j\}} \sum_{\psi \in Q}\sup_{a \in R_k'} (\mathcal{P}_Q+\mathcal{P}_{Q\cup\phi_j}) (\tau_a \circ T^{ w^\ast_1\sqrt{k} - k^{\delta_1}, w^\ast_1\sqrt{k} - k^{\delta_1}}_{\phi_j\psi}) \label{eqn:rkprime}
\end{align}
Note that 
$$\tau_0 \circ T^{\underline{y}\sqrt{k},w_1^\ast\sqrt{k}-k^{\delta_1}}_{\phi_1\phi_j} \subset \tau_0 \circ \sqrt{k} \cdot T^{\underline{y},\underline{y}}_{\phi_1\phi_j} \subset \tau_0 \circ T^{2w^\ast_1,2w^\ast_1}_{\phi_1\phi_j},$$ 
so $R_k' \subset \sqrt{k} \cdot R' \subset \R^2$ for the compact set $R' := R \oplus \tau_0 \circ T^{2w^\ast_1,2w^\ast_1}_{\phi_1\phi_j} \subset \R^2$, independent of $k$. Thus (\ref{eqn:rkprime}) is upper bounded by
\begin{equation}\label{eqn:rkprime2}
\sum_{Q \subseteq A \backslash \{\phi_j\}} \sum_{\psi \in Q}\sup_{a \in \sqrt{k} \cdot R'} (\mathcal{P}_Q+\mathcal{P}_{Q\cup\phi_j}) (\tau_a \circ T^{ w^\ast_1\sqrt{k} - k^{\delta_1}, w^\ast_1\sqrt{k} - k^{\delta_1}}_{\phi_j\psi}).
\end{equation}
As the compact set $R'$ is independent of $k$, one can apply Lemma \ref{lem:sup.bound} to (\ref{eqn:rkprime2}). Union bound over the summands tells us that the sum in (\ref{eqn:rkprime2}) is not far from the mean, which is
$$ 
\sum_{Q \subseteq A \backslash \{\phi_j\}} \sum_{\psi \in Q}\E
(\mathcal{P}_Q+\mathcal{P}_{Q\cup\phi_j}) (\tau_a \circ T^{ w^\ast_1\sqrt{k} - k^{\delta_1}, w^\ast_1\sqrt{k} - k^{\delta_1}}_{\phi_j\psi}) = (w^\ast_1\sqrt{k}-k^{\delta_1})^2 \mathcal{E}(\mathbf{1}, \phi_j).
$$
Explicitly, for each $Q \subseteq A \backslash \{\phi_j\}$,
and each $\psi \in Q$, apply Lemma \ref{lem:sup.bound} with
the sets $R = R'$, $S = T^{w^\ast_1-k^{\delta_1-1/2},w^\ast_1-k^{\delta_1-1/2}}_{\phi_j\psi}$ and the Poisson point processes $k\cdot\mathcal{P}_Q$ and
$k\cdot \mathcal{P}_{Q\cup\phi_j}$ to obtain
$$\sup_{a \in R'}
(\mathcal{P}_Q+\mathcal{P}_{Q\cap\phi_j}) (\tau_b \circ T^{w^\ast_1\sqrt{k}-k^{\delta_1},w^\ast_1\sqrt{k}-k^{\delta_1}}_{\phi_j\psi}) \leq k (\mu_Q+\mu_{Q\cap \phi_j})|S| + O(|S|k^{1/2+\epsilon}) \mbox{ w.h.p.}  $$
Note that
$|S| = O((w^\ast_1-k^{{\delta_1}-1/2})^2) \leq O((w^\ast_1)^2) = O(1),$ 
so $O(|S|k^{1/2+\epsilon}) = O(k^{1/2+\epsilon})$. 
Fix $0 < \epsilon < \delta_\phi$. Taking a union bound over all pairs $(Q, \psi)$, we get
$$\sum_{Q \subseteq A \backslash \{\phi_j\}} \sum_{\psi \in Q}
\sup_{a \in \sqrt{k}\cdot R'}
(\mathcal{P}_Q+\mathcal{P}_{Q\cup\phi_j}) (\tau_a \circ T^{w^\ast_1\sqrt{k}-k^{\delta_1},w^\ast_1\sqrt{k}-k^{\delta_1}}_{\phi_j\psi})
= (w^\ast_1\sqrt{k}-k^{\delta_1})^2 \mathcal{E}(\mathbf{1}, \phi_j) + O(k^{1/2+\epsilon}).$$
Finally, by (\ref{eqn:wstar1}), for some constant $C > 0$,
with high probability
$$(w^\ast_1\sqrt{k}-k^{\delta_1})^2 \mathcal{E}(\mathbf{1}, \phi_j) + O(k^{1/2+\epsilon})
\leq \left(\frac{w^\ast_1\sqrt{k} -k^{\delta_1}}{w^\ast_1}\right)^2 + O(k^{1/2+\epsilon}) \leq k - Ck^{1/2+{\delta_1}} \leq k, $$
for $k$ large enough.
This establishes (\ref{eqn:sandwich.under}) for the case $\phi = \phi_1$, $\varphi = \phi_j$, $j > 1$. 
\end{proof}

\begin{lem}\label{lem:base.case.2}
Equations (\ref{eqn:under.y}) and (\ref{eqn:over.y}) hold for $\phi = \phi_1$. 
\end{lem}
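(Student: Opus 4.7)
The plan is to establish (\ref{eqn:under.y}) and (\ref{eqn:over.y}) by computing $\E[\underline{K}(\bar b,\underline y,k)]$ and $\E[\overline{K}(\bar b,\overline y,k)]$ in closed form, showing that each deviates from $k$ by a term of order $k^{1/2+\delta_{\phi_1}}$ with the correct sign, and then controlling the Poisson fluctuations uniformly over $b\in\sqrt k\cdot R$ via Lemma~\ref{lem:sup.bound}. The base case is especially clean because $\phi_1$ minimises $w^\ast$: for every $\varphi\neq\phi_1$ one has $w^\ast_{\phi_1}\leq w^\ast_\varphi$, so the first cases of (\ref{eqn:under.equal}) and (\ref{eqn:over.equal}) apply throughout, and no inductive input is needed.

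For the mean, interchanging summations in (\ref{eqn:defn.e}) and collecting intensities gives the identity
\[
\mathcal{E}(w,\phi)=\sum_{\varphi\neq\phi}\nu_\varphi\,|T^{w_\phi,w_\varphi}_{\phi\varphi}|,\qquad \nu_\varphi:=\sum_{Q\subseteq A:\,\varphi\in Q}\mu_Q,
\]
so $\E[\underline{K}(\bar b,\underline y,k)]=\mathcal{E}(w^{(\underline y)},\phi_1)$ with $w^{(\underline y)}_{\phi_1}=\underline y\sqrt k$ and $w^{(\underline y)}_\varphi=w^\ast_{\phi_1}\sqrt k-k^{\delta_{\phi_1}}$ for $\varphi\neq\phi_1$, while $\E[\overline{K}(\bar b,\overline y,k)]=\mathcal{E}(\overline y\sqrt k\cdot\mathbf{1},\phi_1)$. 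Combining the area formula $|T^{w_\phi,w_\varphi}_{\phi\varphi}|=\tfrac12\bigl(w_\phi^2-(w_\phi-w_\varphi)_+^2\bigr)|\sin\theta_{\phi\varphi}|$ (which covers both the trapezium and full-triangle cases uniformly) with the normalisation $\tfrac12(w^\ast_{\phi_1})^2\sum_{\varphi\neq\phi_1}\nu_\varphi|\sin\theta_{\phi_1\varphi}|=\mathcal{E}(w^\ast_1\cdot\mathbf{1},\phi_1)=1$ from (\ref{eqn:wstar1}), a direct expansion yields
\[
\E[\underline{K}(\bar b,\underline y,k)]=k+\tfrac{2}{w^\ast_{\phi_1}}\,k^{1/2+\delta_{\phi_1}}+O(k^{2\delta_{\phi_1}}),\quad \E[\overline{K}(\bar b,\overline y,k)]=k-\tfrac{2}{w^\ast_{\phi_1}}\,k^{1/2+\delta_{\phi_1}}+O(k^{2\delta_{\phi_1}}).
\]
The sign of the $k^{1/2+\delta_{\phi_1}}$ correction is forced by the geometry: $\underline K$ counts points in a trapezium obtained by shaving a strip of width $2k^{\delta_{\phi_1}}$ off a triangle of side $\underline y\sqrt k>w^\ast_{\phi_1}\sqrt k$, whereas $\overline K$ is a pure triangle of the smaller side $\overline y\sqrt k<w^\ast_{\phi_1}\sqrt k$.

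Next, the uniform concentration. Both $\underline K$ and $\overline K$ are finite sums, indexed by pairs $(\varphi,Q)$ with $\varphi\in A\setminus\{\phi_1\}$ and $Q\subseteq A\setminus\{\varphi\}$, of Poisson counts $\mathcal{P}_{\varphi\cup Q}(\tau_b T^{\cdot,\cdot}_{\phi_1\varphi})$. Rescaling the starting point by $1/\sqrt k$ turns each count into a count of a PPP of intensity $k\mu_{\varphi\cup Q}$ inside a translate of a compact, $k$-independent region, so Lemma~\ref{lem:sup.bound} yields, for any $\epsilon>0$,
\[
\sup_{b\in\sqrt k\cdot R}\bigl|\mathcal{P}_{\varphi\cup Q}(\tau_b T^{\cdot,\cdot}_{\phi_1\varphi})-\E[\mathcal{P}_{\varphi\cup Q}(\tau_b T^{\cdot,\cdot}_{\phi_1\varphi})]\bigr|=O(k^{1/2+\epsilon})\quad\text{w.h.p.}
\]
A union bound over the finite index set propagates the same rate to $\underline K$ and $\overline K$.

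Choosing $\epsilon\in(0,\delta_{\phi_1})$ makes $k^{1/2+\epsilon}=o(k^{1/2+\delta_{\phi_1}})$, so the deterministic mean gap strictly dominates the random fluctuation; hence w.h.p., uniformly in $b\in\sqrt k\cdot R$, one has $\underline K(\bar b,\underline y,k)\geq k$ and $\overline K(\bar b,\overline y,k)\leq k$, which is (\ref{eqn:under.y})--(\ref{eqn:over.y}). The main obstacle is not conceptual but bookkeeping: one has to carry the $O(k^{1/2+\delta_{\phi_1}})$-level terms through the area expansion so that the mean gap has the correct sign and strictly beats the Chernoff-scale fluctuation; the constraint $\delta_\phi\in(0,1/2)$ is exactly what reserves room for both steps.
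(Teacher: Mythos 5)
Your proposal is correct and follows essentially the same route as the paper's proof: compute the mean of $\underline{K}$ and $\overline{K}$ via $\mathcal{E}$ and the normalisation $\mathcal{E}(w^\ast_1\cdot\mathbf{1},\phi_1)=1$ from (\ref{eqn:wstar1}), obtaining a deterministic gap of order $k^{1/2+\delta_{\phi_1}}$ with the right sign (the paper phrases the trapezium correction as $\mathcal{E}(\sqrt{k}\cdot w,\phi_1)=\mathcal{E}((w^\ast_1\sqrt{k}+k^{\delta_1})\cdot\mathbf{1},\phi_1)-O(k^{2\delta_1})$ where you expand the areas explicitly), and then beat the $O(k^{1/2+\epsilon})$ fluctuations uniformly over $b\in\sqrt{k}\cdot R$ via Lemma \ref{lem:sup.bound} with $\epsilon<\delta_{\phi_1}$. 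The only differences are cosmetic (your closed-form area formula and re-indexed intensity $\nu_\varphi$ versus the paper's grouped sums), so the argument matches the paper's.
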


\begin{proof}
Define the weight vector $w = \bar{y}\sqrt{k} \cdot \mathbf{1}$. 
For each $Q \subseteq A \backslash \{\phi_1\}$,
set $S = T^w_{\phi_1Q}$, and for each $\phi_j \in Q$,
apply Lemma \ref{lem:sup.bound} with sets $R, S$, and the Poisson point process
$\mathcal{P}_{\phi_1\phi_j}$.
As before, note that in each case $S$, $|S| = O(1)$.
By the union bound over all such sets $Q$ and such angles $\phi_j$, 
\begin{align}
\sup_{b \in \sqrt{k} \cdot R} \overline{K}(\bar{b},\overline{y},k) 
&\leq \sum_{Q \subseteq A \backslash \{\phi_1\}} \sum_{\varphi \in Q}\sup_{b \in \sqrt{k} \cdot R}|(\mathcal{P}_Q + \mathcal{P}_{Q\cup\phi_1}) \cap(\tau_b \circ T^w_{\phi_1Q})| &\mbox{ by (\ref{eqn:sandwich.over})} \nonumber \\
&\leq \mathcal{E}(w, \phi_1) +O(|S|k^{1/2 + \epsilon}) &\mbox{ by Lemma \ref{lem:sup.bound}} \nonumber\\
&= \left(\frac{w_1^\ast\sqrt{k} - k^{\delta_{\phi_1}}}{w_1^\ast}\right)^2+
O(k^{1/2 + \epsilon}) &\mbox{ by definition of }\bar{y}\nonumber \\
& \leq k - Ck^{1/2+{\delta_{\phi_1}}}\leq k
\end{align}
with high probability, for large enough $k$, and for some constant $C > 0$. 
This proves (\ref{eqn:over.y}) for $\phi = \phi_1$. 

Similarly, define the weight vector $w$ with
$$ w_1 = \underline{y} = w^\ast_1 + k^{\delta_1-1/2}, \hspace{1em} w_j = w^\ast_1 - k^{\delta_1-1/2} \mbox{ for } j > 1.$$
By the same argument as above, from (\ref{eqn:sandwich.under}) for $\phi = \phi_1$
and Lemma \ref{lem:sup.bound}, we have
$$
\inf_{b \in \sqrt{k} \cdot R} \underline{K}(\bar{b},\underline{y},k) \geq  \mathcal{E}(\sqrt{k} \cdot w, \phi_1) + O(k^{1/2 + \epsilon}).$$
By the definition of $\mathcal{E}(\sqrt{k}\cdot w, \phi_1)$ given in (\ref{eqn:defn.e}),
and the geometry of the regions $T^{\sqrt{k} \cdot w}_{\phi Q}$, we have
$$ \mathcal{E}(\sqrt{k} \cdot w, \phi_1) = \mathcal{E}((w^\ast_1\sqrt{k}+k^{\delta_1}) \cdot \mathbf{1}, 1) - O(k^{2\delta_1}). $$
Therefore, for some constant $C > 0$,
$$ \inf_{b \in \sqrt{k} \cdot R} \underline{K}(\bar{b},\underline{y},k) \geq
\left(\frac{w_1^\ast\sqrt{k} +k^{\delta_1}}{w_1^\ast}\right)^2 
- O(k^{2\delta_1}) + O(k^{1/2 + \epsilon}) = k + Ck^{1/2+\delta_1} \geq k $$
with high probability for large enough $k$. This proves (\ref{eqn:under.y}) for $\phi = \phi_1$.
\end{proof}

\begin{lem}\label{lem:base.case.3}
For all $j > 1$,
Equations (\ref{eqn:sandwich.under}) and (\ref{eqn:sandwich.over})
hold for $\varphi = \phi_1$, $\phi = \phi_j$.
\end{lem}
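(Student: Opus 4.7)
The strategy parallels Lemma \ref{lem:base.case.1}, but with the roles of $\phi$ and $\varphi$ swapped. The crucial simplification is that here the would-be killers $\bar{a} = (a,\phi_1)$ themselves travel in the already-analysed direction $\phi_1$, so Lemma \ref{lem:base.case.2} may be invoked as a black box to control their trajectory lengths, and no further Chernoff bound is required.

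First I would translate Lemma \ref{lem:base.case.2} into a uniform concentration statement on $L^k((a,\phi_1))$ adapted to the geometry of this case. Fix a compact set $R' \subset \R^2$, independent of $k$, large enough that $\sqrt{k}\cdot R'$ contains every potential $\phi_1$-killer origin associated with a $\bar{b}=(b,\phi_j)$ satisfying $b \in \sqrt{k}\cdot R$ and travel distance at most $\underline{y}\sqrt{k}$; the existence of such $R'$ follows from exactly the same Minkowski-sum argument used in the passage from $R$ to $R'$ in Lemma \ref{lem:base.case.1}. Applying Lemma \ref{lem:base.case.2} with compact set $R'$ in place of $R$, and recalling that the strict bounds $\overline{K}(\bar a,\overline y,k) \le k - Ck^{1/2+\delta_1}$ and $\underline{K}(\bar a,\underline y,k) \ge k + Ck^{1/2+\delta_1}$ obtained in its proof translate into strict survival/death respectively, one obtains
$$ w_1^\ast \sqrt{k} - k^{\delta_1} \,\le\, L^k\bigl((a,\phi_1)\bigr) \,\le\, w_1^\ast \sqrt{k} + k^{\delta_1} \quad \text{for all } a \in \sqrt{k}\cdot R', \text{ w.h.p.} $$

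For the lower bound (\ref{eqn:sandwich.under}), inspection of (\ref{eqn:under.equal})--(\ref{eqn:under.above}) shows that in both the equal case $w^\ast_j = w^\ast_1$ (where $\delta_j = \delta_1$ by construction of the $\delta_\phi$) and the strictly-above case $w^\ast_j > w^\ast_1$, the second-coordinate truncation in $T^{\cdot,\cdot}_{\phi_j\phi_1}$ reduces to $w_1^\ast \sqrt{k} - k^{\delta_1}$ for all sufficiently large $k$. Hence every potential killer $\bar{a}=(a,\phi_1)$ counted by $\underline{K}(\bar b,\underline y,\phi_1,k)$ has distance-to-intersection $d(\bar a,\bar b)\sqrt{k} \le w_1^\ast \sqrt{k} - k^{\delta_1} \le L^k(\bar a)$ w.h.p., so its $\G^k$-trajectory reaches the intersection point on $\bar b$'s potential path. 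Since $\bar a$ is first at that intersection by definition of potential killer, it is a would-be killer of $\bar b$, yielding $\underline{K} \le K$ simultaneously for all $b \in \sqrt{k}\cdot R$ by a union bound.

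For the upper bound (\ref{eqn:sandwich.over}) I would split on the two sub-cases. If $w^\ast_j = w^\ast_1$ then $\delta_j = \delta_1$, the minimum in (\ref{eqn:over.equal}) evaluates to $\overline{y}\sqrt{k}$, so $\overline{K}(\bar b,\overline y,\phi_1,k) = \tilde{K}(\bar b,\overline y,\phi_1,k)$ and $K \le \tilde K = \overline K$ is immediate from (\ref{eqn:tilde.k.upper}) with no probabilistic content. If $w^\ast_j > w^\ast_1$, then for large $k$ the minimum in (\ref{eqn:over.above}) equals $w_1^\ast \sqrt{k} + k^{\delta_1}$; any would-be killer $\bar{a}=(a,\phi_1)$ necessarily satisfies $d(\bar a,\bar b)\sqrt{k} \le L^k(\bar a) \le w_1^\ast \sqrt{k} + k^{\delta_1}$ w.h.p., so it is already counted by $\overline{K}$. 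The only obstacle here is bookkeeping: one must fix the single compact enlargement $R'$ before invoking Lemma \ref{lem:base.case.2}, so that the uniform concentration covers every $\bar a$ that can arise as $b$ varies over $\sqrt{k}\cdot R$. This is handled exactly as in the proof of Lemma \ref{lem:base.case.1}, and I do not anticipate a substantive difficulty beyond it.
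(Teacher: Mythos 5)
Your proposal is correct and follows essentially the same route as the paper: both reduce the case $\varphi=\phi_1$, $\phi=\phi_j$ to the already-established bounds (\ref{eqn:sandwich.under})--(\ref{eqn:over.y}) for direction $\phi_1$, applied over a Minkowski-enlarged compact set $R'$ containing all relevant killer origins. Your repackaging of Lemma \ref{lem:base.case.2} as a uniform concentration of $L^k((a,\phi_1))$ is just an equivalent phrasing of the paper's direct bounds $K(\bar a, d(\bar a,\bar b),k)\le \overline{K}(\bar a, w^\ast_1-k^{\delta_1-1/2},k)$ and $K(\bar a, d(\bar a,\bar b),k)\ge \underline{K}(\bar a, w^\ast_1+k^{\delta_1-1/2},k)$, and your treatment of the equal-$w^\ast$ sub-case matches the paper's remark that the earlier argument applies there.
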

\begin{proof}
If $w^\ast_\phi = w^\ast_j = w^\ast_\varphi = w^\ast_1$, then the previous argument applies.
So we only need to consider $\phi = \phi_j$ such that $w^\ast_j > w^\ast_1$. In this case, for large enough $k$,
$$\underline{K}(\bar{b},\underline{y},\phi_1,k) = \sum_{Q \subseteq A \backslash \{\phi_1\}} |\mathcal{P}_{\phi_1 \cup Q}\cap (\tau_b \circ T^{\underline{y}\sqrt{k},w^\ast_1\sqrt{k} - k^{\delta_1}}_{\phi_j\phi_1})|
$$
and
$$\overline{K}(\bar{b},\overline{y},\phi_1,k) = \sum_{Q \subseteq A \backslash \{\phi_1\}} |\mathcal{P}_{\phi_1 \cup Q} \cap (\tau_b \circ T^{\overline{y}\sqrt{k},w^\ast_1\sqrt{k} + k^{\delta_1}}_{\phi_j\phi_1})|, $$
where here 
$\overline y= w^*_{j}-o(1)$ and
$\underline y= w^*_{j}+o(1)$.
For the lower bound, we 
need to show that potential killers $\bar{a} = (a,\phi_1)$ with $d(\bar{a},\bar{b})\sqrt{k} < w^\ast_1\sqrt{k}-k^{\delta_1}$ are would-be killers. Let $R' := R \oplus \tau_0 \circ T^{\overline{y},w^\ast_1-k^{\delta_1-1/2}}_{\phi_j\phi_1}$. By (\ref{eqn:sandwich.over}) and (\ref{eqn:over.y}) for the case $\bar{a} = (a,\phi_1)$
proved in Lemmas \ref{lem:base.case.1} and \ref{lem:base.case.2}, we have
\begin{align*}
\sup_{b \in \sqrt{k} \cdot R}\sup_{a \in \tau_b \circ T^{\overline{y}\sqrt{k},w^\ast_1\sqrt{k}-k^{\delta_1}}_{\phi_j\phi_1}} K(\bar{a},d(\bar{a},\bar{b}),k) \leq \sup_{a \in \sqrt{k} \cdot R'} \overline{K}(\bar{a},w^\ast_1-k^{{\delta_1}-1/2},k) \leq k,
\end{align*}
so this is the desired result. Similarly, for the upper bound, we need to show that potential killers with $d(\bar{a},\bar{b})\sqrt{k} > w^\ast_1\sqrt{k}+k^{\delta_1}$ cannot meet $\bar{b}$ in $\G^k$. Apply (\ref{eqn:sandwich.under}) and (\ref{eqn:under.y}) for the case $\bar{a} = (a,\phi_1)$ proved in Lemmas \ref{lem:base.case.1} and \ref{lem:base.case.2}, we have
$$
\inf_{b \in \sqrt{k} \cdot R}\inf_{a \in \tau_b \circ \left(T^{\underline{y}\sqrt{k},\underline{y}\sqrt{k}}_{\phi_j\phi_1} \backslash T^{\underline{y}\sqrt{k},w^\ast_1\sqrt{k}+k^{\delta_1}}_{\phi_j\phi_1}\right)} K(\bar{a},d(\bar{a},\bar{b}),k) \geq
\inf_{a \in \sqrt{k} \cdot R'} \underline{K}(\bar{a},w^\ast_1+k^{{\delta_1}-1/2},k) \geq k \hspace{1em}
\mbox{w.h.p.},
$$
where $R' = R + \tau_0 \circ \left(T^{\underline{y},\underline{y}}_{\phi_j\phi_1} \backslash T^{\underline{y},w^\ast_1\sqrt{k}+k^{\delta_1}}_{\phi_j\phi_1}\right)$. This completes the proof.
\end{proof}

Suppose Lemma \ref{lem:keybound} holds for all $\phi_1, \ldots, \phi_{i-1}$. 
This means we have proven that (\ref{eqn:under.y}) and (\ref{eqn:over.y})
hold for $\bar{b} = (b,\phi_\ell)$ for all $\ell = 1, \ldots, i-1$,
and that (\ref{eqn:sandwich.under}) and (\ref{eqn:sandwich.over})
hold for pairs $(\phi,\varphi) = (\phi_\ell,\phi_j)$ and $(\phi,\varphi)=(\phi_j,\phi_\ell)$,
for all $\ell = 1, \ldots, i-1$, and $j > \ell$. 
Thus, we may assume that $w_{i-1}^\ast < w_i^\ast$. 
Lemmas \ref{lem:base.case.1}, \ref{lem:base.case.2} and \ref{lem:base.case.3} establish the base case with $\phi = \phi_1$. We now prove the various induction statements for $\phi = \phi_i$.

\begin{lem}
Equations (\ref{eqn:sandwich.under}) and (\ref{eqn:sandwich.over}) hold for $\phi = \phi_i$, $\varphi = \phi_j$, $j \neq i$.
\end{lem}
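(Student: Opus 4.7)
The plan is to parallel the base cases (Lemmas \ref{lem:base.case.1} and \ref{lem:base.case.3}) with $\phi_1$ replaced by $\phi_i$, splitting on whether $j<i$ or $j>i$.

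For the easier subcase $j<i$, one has $w^\ast_{\phi_j}<w^\ast_{\phi_i}$ and the induction hypothesis already supplies (\ref{eqn:over.y}) and (\ref{eqn:under.y}) for motorcycles of direction $\phi_j$. The argument of Lemma \ref{lem:base.case.3} then transfers verbatim after replacing $(\phi_1,\phi_j)$ by $(\phi_j,\phi_i)$: any potential killer $\bar{a}=(a,\phi_j)$ of $\bar{b}$ with $d(\bar{a},\bar{b})<w^\ast_j-k^{\delta_j-1/2}$ survives to meet $\bar{b}$ by (\ref{eqn:over.y}) applied to $\bar{a}$, so it is indeed a would-be killer; conversely, any $\bar{a}$ with $d(\bar{a},\bar{b})>w^\ast_j+k^{\delta_j-1/2}$ perishes before reaching $\bar{b}$ by (\ref{eqn:under.y}), so it is not a would-be killer. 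These two observations yield (\ref{eqn:sandwich.under}) and (\ref{eqn:sandwich.over}) respectively.

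The harder subcase is $j>i$, for which $\phi_j$ is not yet covered by the induction. The upper bound (\ref{eqn:sandwich.over}) remains trivial as in Lemma \ref{lem:base.case.1}: with $\overline{y}=w^\ast_i-k^{\delta_i-1/2}$, the definition of $\overline{K}$ collapses to $\overline{K}(\bar{b},\overline{y},\phi_j,k)=\tilde{K}(\bar{b},\overline{y},\phi_j,k)\geq K(\bar{b},\overline{y},\phi_j,k)$ by (\ref{eqn:tilde.k.upper}). For the lower bound, it suffices to show that every potential killer $\bar{a}=(a,\phi_j)$ of $\bar{b}$ with $d(\bar{a},\bar{b})\leq w^\ast_i-k^{\delta_i-1/2}$ is actually a would-be killer of $\bar b$, i.e. that $K(\bar{a},d(\bar{a},\bar{b}),k)\leq k$ uniformly over $b\in\sqrt{k}\cdot R$ and admissible $a$. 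Bounding $\tilde{K}(\bar{a},d,k)$ directly fails here, since $\mathcal{E}(d\cdot\mathbf{1},\phi_j)$ need not be $\leq 1$ when $d\approx w^\ast_i$; this is the key point where the proof must use the induction more cleverly than in the base case. The idea is to decompose $K(\bar{a},d,k)=\sum_{\ell\neq j}K(\bar{a},d,\phi_\ell,k)$ and handle each direction separately: for $\ell<i$, the induction hypothesis (\ref{eqn:under.y}) applied to $\phi_\ell$ implies that any would-be killer $\bar{c}=(c,\phi_\ell)$ of $\bar{a}$ must satisfy $d(\bar{c},\bar{a})\leq w^\ast_\ell+k^{\delta_\ell-1/2}$, so the contribution from direction $\phi_\ell$ is dominated by the count of points of $\bigcup_{Q}\mathcal{P}_{\phi_\ell\cup Q}$ in the truncated region $\tau_a\circ T^{d\sqrt{k},(w^\ast_\ell+k^{\delta_\ell-1/2})\sqrt{k}}_{\phi_j\phi_\ell}$; for $\ell\geq i$ with $\ell\neq j$, the trivial bound $K\leq\tilde{K}$ gives the region $\tau_a\circ T^{d\sqrt{k},d\sqrt{k}}_{\phi_j\phi_\ell}$. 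Summing these bounds and applying Lemma \ref{lem:sup.bound} on a fixed compact Minkowski enlargement $R'$ of $R$, one obtains with high probability
\[
K(\bar{a},d,k)\leq k\,\mathcal{E}(w,\phi_j)+O(k^{1/2+\epsilon}),
\]
where the effective unscaled weight vector $w$ has $w_\ell=w^\ast_\ell+k^{\delta_\ell-1/2}$ for $\ell<i$ and $w_\ell=w^\ast_i-k^{\delta_i-1/2}$ for $\ell\geq i$.

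Finally, $\sqrt{k}\cdot w$ is exactly the type of perturbation of the inductive reference vector $w^{(i)}:=(w^\ast_1,\ldots,w^\ast_{i-1},w^\ast_i,\ldots,w^\ast_i)$ addressed by Lemma \ref{lem:function.e}, so that lemma yields $k\,\mathcal{E}(w,\phi_j)\leq k\,\mathcal{E}(w^{(i)},\phi_j)-Ck^{1/2+\delta_i}+o(k^{1/2+\delta_i})$ for some $C>0$. The sequential construction of $w^\ast$ in the proof of Lemma \ref{lem:constants} ensures $\mathcal{E}(w^{(i)},\phi_j)\leq 1$, and choosing $\epsilon<\delta_i$ in Lemma \ref{lem:sup.bound} absorbs the Poisson concentration error into the negative term, giving $K(\bar{a},d,k)\leq k$ uniformly with high probability, which is the desired conclusion. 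The main obstacle is precisely this subcase $j>i$: the delicate step is combining the induction hypothesis on low-index directions $\ell<i$ with the trivial bound on high-index directions $\ell\geq i$ so that the resulting effective weight vector hits exactly the reference $w^{(i)}$ to which Lemma \ref{lem:function.e} applies, and verifying that the $k^{1/2+\delta_i}$ gap it produces comfortably beats the $O(k^{1/2+\epsilon})$ Poisson concentration error.
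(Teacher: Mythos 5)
Your proof is correct and follows essentially the same route as the paper's: the trivial $\tilde{K}$ upper bound, then for $j>i$ a direction-by-direction bound on the would-be killers of a potential killer (truncated regions from the induction hypothesis for $\ell<i$, the trivial bound for $\ell\geq i$), which is exactly the paper's $\tilde{K}^{i,j}$, followed by Lemma \ref{lem:sup.bound}, the identification of the mean with $\mathcal{E}(\sqrt{k}\cdot w,\phi_j)$ for the same perturbed weight vector, and Lemma \ref{lem:function.e} against the reference vector $(w^\ast_1,\ldots,w^\ast_{i-1},w^\ast_i,\ldots,w^\ast_i)$. Your observation that the construction in Lemma \ref{lem:constants} only gives $\mathcal{E}(w^{(i)},\phi_j)\leq 1$ for $j>i$ (rather than equality, as the paper writes) is in fact the more accurate statement, and suffices since only an upper bound is needed.
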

\begin{proof}
The induction hypothesis already covers the case $j < i$.
Consider the case $j > i$.
Again, the upper bound is equal to $\tilde{K}(\bar{b},y,\phi_j,k)$,
so we only need to establish the lower bound.
From the induction assumptions,
by the same consideration as in Lemma \ref{lem:base.case.1}, it is enough to show that
\begin{equation}\label{eqn:sandwich.under.induction}
\sup_{b \in \sqrt{k} \cdot R}\sup_{a \in \tau_b \circ T^{y\sqrt{k},,w^\ast_{\phi_i}\sqrt{k}-k^{\delta_{\phi_i}}}_{\phi_i\phi_j}} \tilde{K}^{i,j}(\bar{a},d(\bar{a},\bar{b}),k) \leq k \hspace{1em} w.h.p.,
\end{equation}
where
\begin{equation}\label{eqn:tilde.k.j}
\tilde{K}^{i,j}(\bar{a},z,k) := \sum_{\ell=1}^{i-1}\overline{K}(\bar{a},z,\phi_\ell,k)
+ \sum_{\ell=i, \ell \neq j}^{|A|} \tilde{K}(\bar{a},z,\phi_\ell,k). 
\end{equation}
With $R'$ defined analogous to the base case, we have
\begin{align*}
& \sup_{b \in \sqrt{k} \cdot R}\sup_{a \in \tau_b \circ T^{y\sqrt{k},w^\ast_{\phi_i}\sqrt{k}-k^{\delta_{\phi_i}}}_{\phi_i\phi_j}} \tilde{K}^{i,j}(\bar{a},d(\bar{a},\bar{b}),k) \leq 
\sup_{a \in \sqrt{k}\cdot R'} \tilde{K}^{i,j}(\bar{a},w^\ast_{\phi_i}-k^{\delta_{\phi_i}-1/2},k) \\
&\leq \sup_{a \in \sqrt{k}\cdot R'} \sum_{\ell=1}^{i-1}\overline{K}(\bar{a},w^\ast_{\phi_i}-k^{\delta_{\phi_i}-1/2},\phi_\ell,k) + \sup_{a \in \sqrt{k}\cdot R'} \sum_{\ell=i, \ell \neq j}^{|A|} \tilde{K}(\bar{a},w^\ast_{\phi_i}-k^{\delta_{\phi_i}-1/2},\phi_\ell,k) \\
&\leq \sum_{\ell=1}^{i-1}\sum_{Q \subseteq A \backslash \{\phi_\ell\}}
\E\mathcal{P}_{\phi_\ell \cup Q} (\tau_a \circ T^{w^\ast_{\phi_i}\sqrt{k}-k^{\delta_{\phi_i}},w^\ast_{\phi_\ell}\sqrt{k} + k^{\delta_{\phi_\ell}}}_{\phi_j\phi_\ell}) \\
& + \sum_{\ell=i,\ell \neq j}^{|A|}\sum_{Q \subseteq A \backslash \{\phi_\ell\}} 
\E\mathcal{P}_{\phi_\ell \cup Q} (\tau_a \circ T^{w^\ast_{\phi_i}\sqrt{k} - k^{\delta_{\phi_i}},w^\ast_{\phi_i}\sqrt{k} - k^{\delta_{\phi_i}}}_{\phi_j\phi_\ell}) + O(k^{1/2+\epsilon}) \hspace{3em} \mbox{ by Lemma \ref{lem:sup.bound}},
\end{align*}
where $\epsilon > 0$ is an arbitrarily small constant.
Define the weight vector $w \in \R^M$ with
$$ w_\ell = w^\ast_\ell + k^{\delta_\ell-1/2} \mbox{ for } \ell = 1, \ldots, i-1, \hspace{1em} w_\ell = w^\ast_i - k^{\delta_i-1/2} \mbox{ for } \ell \geq i. $$
Then the sum of the expectations in the last expression
is equal to $\mathcal{E}(\sqrt{k}\cdot w, \phi_j)$. Define $w' \in \R^M$ via
$$ w'_\ell = w^\ast_\ell, \mbox{ for } \ell = 1, \ldots, i-1,
\hspace{1em} w'_\ell = w^\ast_i, \mbox{ for } \ell \geq i. $$
By Lemma \ref{lem:function.e}, we have 
$$ \mathcal{E}(\sqrt{k}\cdot w, \phi_j) = \mathcal{E}(\sqrt{k} \cdot w', \phi_j) + o(k^{1/2+\delta_i}) - Ck^{1/2+\delta_i} $$
for some constant $C > 0$.
Now, by definition of $w^\ast$, 
$$ \mathcal{E}(\sqrt{k} \cdot w', \phi_j) = k.  $$
Therefore, the quantity we need is upper-bounded by 
$$
k + o(k^{1/2+\delta_i}) - Ck^{1/2+\delta_i} + O(k^{1/2+\epsilon})= k - Ck^{1/2+\delta_{\phi_i}} \leq k $$
with high probability for large enough $k$. This proves (\ref{eqn:sandwich.under.induction}), as needed.
\end{proof}
\begin{lem}
Equations (\ref{eqn:under.y}) and (\ref{eqn:over.y}) hold for $\phi = \phi_i$.
\end{lem}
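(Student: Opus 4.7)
The plan is to mimic the proof of Lemma~\ref{lem:base.case.2} line by line, with $\phi_1$ replaced by $\phi_i$, relying on the preceding lemma together with the induction hypothesis to supply~(\ref{eqn:sandwich.under}) and~(\ref{eqn:sandwich.over}) for $\phi = \phi_i$ against each $\varphi \neq \phi_i$---the role played by Lemmas~\ref{lem:base.case.1} and~\ref{lem:base.case.3} in the base step.

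First I would establish (\ref{eqn:over.y}). Fix $\overline{y} = w^\ast_i - k^{\delta_i - 1/2}$ and sum (\ref{eqn:sandwich.over}) over $\varphi = \phi_\ell$, $\ell \neq i$. By the case distinctions in the definition of $\overline{K}$, the polygon in each summand is $T^{\overline{y}\sqrt{k},\, w^\ast_\ell\sqrt{k} + k^{\delta_\ell}}_{\phi_i\phi_\ell}$ for $\ell < i$ (since $w^\ast_\ell < w^\ast_i$ and the cap in (\ref{eqn:over.above}) becomes active for large $k$), and $T^{\overline{y}\sqrt{k},\, \overline{y}\sqrt{k}}_{\phi_i\phi_\ell}$ for $\ell > i$ (since $\overline{y}\sqrt{k} < w^\ast_i\sqrt{k}+k^{\delta_i}$ saturates the min in (\ref{eqn:over.equal})). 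Applying Lemma~\ref{lem:sup.bound} to each of the finitely many Poisson processes $\mathcal{P}_{\phi_\ell \cup Q}$ on a single compact region $R'$ containing all relevant translates, and union-bounding, yields
\[
\sup_{b\in \sqrt{k}\cdot R}\overline{K}(\bar b, \overline y, k) \;\leq\; \mathcal{E}(\sqrt{k}\cdot w,\phi_i) + O(k^{1/2+\epsilon}),
\]
where $w \in \R^M$ has $w_\ell = w^\ast_\ell + k^{\delta_\ell - 1/2}$ for $\ell < i$ and $w_\ell = w^\ast_i - k^{\delta_i - 1/2}$ for $\ell \geq i$. It remains to estimate $\mathcal{E}(\sqrt{k}\cdot w, \phi_i)$. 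Introduce the reference vector $w''$ with $w''_\ell = w^\ast_\ell$ for $\ell < i$ and $w''_\ell = w^\ast_i$ for $\ell \geq i$: one has $\mathcal{E}(w'', \phi_i) = \mathcal{E}(w^\ast, \phi_i) = 1$, since for each $\ell > i$ the triangles $T^{w^\ast_i, w^\ast_i}_{\phi_i \phi_\ell}$ and $T^{w^\ast_i, w^\ast_\ell}_{\phi_i \phi_\ell}$ coincide (both reduce to the full triangle of side $w^\ast_i$ whenever $w_\varphi \geq w_\phi$). A first-order expansion---the natural extension of Lemma~\ref{lem:function.e} to the index $j = i$---then gives $\mathcal{E}(w, \phi_i) = 1 - C k^{\delta_i - 1/2} + o(k^{\delta_i - 1/2})$, so $\mathcal{E}(\sqrt{k}\cdot w, \phi_i) \leq k - C k^{1/2+\delta_i}$. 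Choosing $0 < \epsilon < \delta_i$ absorbs the concentration error and delivers (\ref{eqn:over.y}).

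The lower bound (\ref{eqn:under.y}) is entirely symmetric: with $\underline{y} = w^\ast_i + k^{\delta_i - 1/2}$ the $\underline{K}$ definitions (\ref{eqn:under.equal})--(\ref{eqn:under.above}) produce the weight vector $w_\ell = w^\ast_\ell - k^{\delta_\ell - 1/2}$ for $\ell < i$, $w_i = w^\ast_i + k^{\delta_i - 1/2}$, and $w_\ell = w^\ast_i - k^{\delta_i - 1/2}$ for $\ell > i$, and the same first-order analysis gives $\mathcal{E}(\sqrt{k}\cdot w, \phi_i) \geq k + C k^{1/2+\delta_i}$. The main obstacle I expect is verifying the extension of Lemma~\ref{lem:function.e} to $j = i$, where the perturbed coordinate is $w_{\phi_i}$ itself. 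The sign-tracking is the same as for $j > i$; the key point is that $\partial |T^{w_\phi, w_\varphi}_{\phi\varphi}|/\partial w_\phi = \sin\theta \cdot \min(w_\phi, w_\varphi) > 0$, so perturbing $w_{\phi_i}$ produces a strictly nonzero first-order change whose sign matches the sign of the perturbation, while the secondary perturbations $\pm k^{\delta_\ell - 1/2}$ at coordinates $\ell < i$ are of lower order $o(k^{\delta_i - 1/2})$. Non-vanishing of the leading constant $C > 0$ is guaranteed by $\mathcal{E}(w^\ast, \phi_i) = 1$, which ensures at least one $(Q, \phi_\ell)$ contributes a positive $\mu$-weighted area.
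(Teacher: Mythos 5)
Your proposal is correct and takes essentially the same route as the paper: sum the sandwich bounds over $\varphi \neq \phi_i$, apply Lemma~\ref{lem:sup.bound} with a union bound over the finitely many $(Q,\psi)$ on a fixed compact region, identify the mean as $\mathcal{E}(\sqrt{k}\cdot w,\phi_i)$ for the perturbed weight vector, and conclude via the first-order estimate $\mathcal{E}(\sqrt{k}\cdot w,\phi_i)=k \mp Ck^{1/2+\delta_i}+o(k^{1/2+\delta_i})$, exactly as in the paper's adaptation of Lemma~\ref{lem:base.case.2}. Your explicit check that Lemma~\ref{lem:function.e} extends to the perturbed coordinate $j=i$ (via $\partial|T^{w_\phi,w_\varphi}_{\phi\varphi}|/\partial w_\phi=\min(w_\phi,w_\varphi)\sin\theta>0$ and positivity of some $\mu$-weighted term) is a point the paper glosses over by citing Lemma~\ref{lem:function.e} directly, so this is a welcome refinement rather than a different argument.
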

\begin{proof}
Define the weight vector $w \in \R^M$ by
$$ w_\ell = w_\ell^\ast + k^{\delta_\ell-1/2} \mbox{ if } \ell \leq i,\qquad 
w_\ell = \bar{y} = w_i^\ast - k^{\delta_i-1/2} \mbox{ if } \ell > i. $$
By the same argument as in Lemma \ref{lem:base.case.2}, 
with high probability,
\begin{align}
\sup_{b \in \sqrt{k} \cdot R} \overline{K}(\bar{b},\overline{y},k) 
&\leq \sum_{Q \subseteq A \backslash \{\phi_i\}} \sum_{\varphi \in Q}\sup_{b \in \sqrt{k} \cdot R}
(\mathcal{P}_Q + \mathcal{P}_{Q\cup \phi_i}) (\tau_b \circ T^{\sqrt{k} \cdot w}_{\phi_iQ}) &\mbox{ by (\ref{eqn:sandwich.over}) for $\phi = \phi_i$} \nonumber \\
&\leq \mathcal{E}(\sqrt{k} \cdot w, \phi) +O(k^{1/2 + \epsilon}) &\mbox{ by Lemma \ref{lem:sup.bound}} \label{eqn:bylemma13}. 
\end{align}
By the induction hypothesis, we have $\delta_i > \delta_\ell$ for $\ell < i$, $w_i^\ast > w_{i-1}^\ast$. By Lemma \ref{lem:function.e}, we have
$$ \mathcal{E}(\sqrt{k} \cdot w, \phi_i) = k + o(k^{1/2+\delta_i}) - Ck^{1/2+\delta_i}$$
for some constant $C > 0$. 
As $\epsilon > 0$ in (\ref{eqn:bylemma13}) is an arbitrary constant,
$$
\sup_{b \in \sqrt{k} \cdot R} \overline{K}(\bar{b},\overline{y},k) \leq k + o(k^{1/2+\delta_i}) + O(k^{1/2+\epsilon}) - Ck^{1/2+\delta_i}\leq k - \frac{C}{2}k^{1/2+\delta_i} \leq k
$$
with high probability, large enough $k$. This proves (\ref{eqn:over.y}) for $\phi = \phi_i$.  

Similarly, define the weight vector $w \in \R^M$ with
$$ w_\ell = w^\ast_\ell - k^{\delta_\ell - 1/2} \mbox{ if } \ell < i, \hspace{1em}  w_\ell = w^\ast_i + k^{\delta_i - 1/2} \mbox{ if } \ell \geq i. $$
By the same argument as above, with (\ref{eqn:sandwich.under}) for $\phi = \phi_i$ and Lemma \ref{lem:sup.bound}, we have
$$
\inf_{b \in \sqrt{k} \cdot R} \underline{K}(\bar{b},\underline{y}) \geq  \mathcal{E}(\sqrt{k} \cdot w, \phi_i) + O(k^{1/2 + \epsilon}).$$
By Lemma \ref{lem:function.e},
$$ \mathcal{E}(\sqrt{k} \cdot w, \phi_i) = k - o(k^{1/2+\delta_i}) + Ck^{1/2+\delta_i}$$
for some constant $C > 0$. As $\epsilon > 0$ is an arbitrarily small constant,
$$ \inf_{b \in \sqrt{k} \cdot R} \underline{K}(\bar{b},\underline{y}) \geq k + Ck^{1/2+\delta_i} - o(k^{1/2+\delta_i}) + O(k^{1/2+\epsilon}) \geq k + \frac{C}{2}k^{1/2+\delta_i} \geq k $$
with high probability. This proves (\ref{eqn:under.y}) for $\phi = \phi_i$.
\end{proof}

\begin{lem}
Equations (\ref{eqn:sandwich.under}) and (\ref{eqn:sandwich.over}) hold for $\varphi = \phi_i$, $\phi = \phi_j$, $j > i$.
\end{lem}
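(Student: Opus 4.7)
My plan is to follow the argument of Lemma \ref{lem:base.case.3} essentially verbatim, replacing $\phi_1$ by $\phi_i$ and invoking the two preceding lemmas in the induction (which give us Equations (\ref{eqn:sandwich.under}), (\ref{eqn:sandwich.over}), (\ref{eqn:under.y}), and (\ref{eqn:over.y}) with $\phi = \phi_i$) instead of the base-case Lemmas \ref{lem:base.case.1} and \ref{lem:base.case.2}. I would first dispose of the case $w^\ast_i = w^\ast_j$, where the inequalities on $w^\ast_\phi$ and $w^\ast_\varphi$ collapse and the previous induction step applies directly. Thus one may assume $w^\ast_\varphi = w^\ast_i < w^\ast_j = w^\ast_\phi$.

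Under this assumption, the definitions (\ref{eqn:under.above}) and (\ref{eqn:over.above}) give, for $k$ large,
\begin{align*}
\underline{K}(\bar{b},\underline{y},\phi_i,k) &= \sum_{Q \subseteq A \backslash \{\phi_i\}} \mathcal{P}_{\phi_i \cup Q}\bigl(\tau_b \circ T^{\underline{y}\sqrt{k},\,w^\ast_i\sqrt{k} - k^{\delta_i}}_{\phi_j\phi_i}\bigr), \\
\overline{K}(\bar{b},\overline{y},\phi_i,k) &= \sum_{Q \subseteq A \backslash \{\phi_i\}} \mathcal{P}_{\phi_i \cup Q}\bigl(\tau_b \circ T^{\overline{y}\sqrt{k},\,w^\ast_i\sqrt{k} + k^{\delta_i}}_{\phi_j\phi_i}\bigr),
\end{align*}
where $\underline{y} = w^\ast_j + o(1)$ and $\overline{y} = w^\ast_j - o(1)$.

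For the lower bound (\ref{eqn:sandwich.under}), it suffices to check that every potential killer $\bar{a} = (a,\phi_i)$ with $d(\bar{a},\bar{b})\sqrt{k} < w^\ast_i\sqrt{k} - k^{\delta_i}$ is actually a would-be killer of $\bar b$, i.e.\ travels at least the distance $d(\bar{a},\bar{b})\sqrt{k}$ in $\G^k$. Setting $R' := R \oplus \tau_0 \circ T^{\underline{y},\,w^\ast_i - k^{\delta_i-1/2}}_{\phi_j\phi_i}$ and using (\ref{eqn:sandwich.over}) together with (\ref{eqn:over.y}) for $\phi = \phi_i$ (both already proved in the two lemmas immediately preceding), we obtain
\begin{equation*}
\sup_{b \in \sqrt{k} \cdot R}\;\sup_{a \in \tau_b \circ T^{\underline{y}\sqrt{k},\,w^\ast_i\sqrt{k}-k^{\delta_i}}_{\phi_j\phi_i}} K(\bar{a},d(\bar{a},\bar{b}),k) \leq \sup_{a \in \sqrt{k} \cdot R'} \overline{K}(\bar{a},\,w^\ast_i - k^{\delta_i-1/2},\,k) \leq k
\end{equation*}
with high probability, so none of these motorcycles exhaust their $k$ lives before reaching $\bar b$.

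Symmetrically, for the upper bound (\ref{eqn:sandwich.over}), one must rule out that potential killers with $d(\bar{a},\bar{b})\sqrt{k} > w^\ast_i\sqrt{k} + k^{\delta_i}$ actually reach $\bar b$ in $\G^k$; this follows by applying (\ref{eqn:sandwich.under}) and (\ref{eqn:under.y}) for $\phi = \phi_i$ to the motorcycle $\bar{a}$ in the complement region, yielding
\begin{equation*}
\inf_{b \in \sqrt{k} \cdot R}\;\inf_{a \in \tau_b \circ \bigl(T^{\underline{y}\sqrt{k},\underline{y}\sqrt{k}}_{\phi_j\phi_i} \setminus T^{\underline{y}\sqrt{k},\,w^\ast_i\sqrt{k}+k^{\delta_i}}_{\phi_j\phi_i}\bigr)} K(\bar{a},d(\bar{a},\bar{b}),k) \geq \inf_{a \in \sqrt{k}\cdot R''} \underline{K}(\bar{a},\,w^\ast_i + k^{\delta_i-1/2},\,k) \geq k
\end{equation*}
with high probability, where $R''$ is the analogous enlargement of $R$. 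The only conceptual point that deserves care, and which I would expect to be the single non-routine step, is that the induction hypothesis is being applied to motorcycles $\bar a$ whose starting locations $a$ live in a $k$-dependent region; this is handled exactly as in Lemma \ref{lem:base.case.3} by enclosing that region inside $\sqrt{k}\cdot R'$ for a fixed compact $R'$ independent of $k$, so that Lemma \ref{lem:sup.bound} and the already-proved induction statements apply uniformly over $a$. This completes the induction.
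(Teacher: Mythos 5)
Your proposal is correct and matches the paper's treatment: the paper's proof of this lemma is literally the statement that it is identical to Lemma \ref{lem:base.case.3}, and your write-up is exactly that argument with $\phi_1$ replaced by $\phi_i$ and the base-case lemmas replaced by the corresponding induction statements for $\phi=\phi_i$. The uniformity point you flag (enclosing the $k$-dependent region of starting points $a$ in $\sqrt{k}\cdot R'$ for a fixed compact $R'$) is indeed the only step needing care, and you handle it as the paper does.
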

\begin{proof}
The proof is identical to that of Lemma \ref{lem:base.case.3}.
\end{proof}

\begin{proof}[Proof of Proposition \ref{prop:concentrate}]
By (\ref{eqn:under.y}) and (\ref{eqn:over.y})
$$ \overline{y}\sqrt{k} = w^\ast_\phi\sqrt{k} - k^{\delta_\phi} \leq \sup_{b \in \sqrt{k} \cdot R }L^k((b,\phi)) \leq \underline{y} = w^\ast_\phi\sqrt{k} + k^{\delta_\phi}  \hspace{1em} \mbox{ w.h.p.} $$
Rearranging gives
$$\sup_{b \in \sqrt{k} \cdot R}|\frac{L^k((b,\phi))}{\sqrt{k}} - w^\ast_\phi| \leq  k^{\delta_\phi-1/2} = o(1),$$
as desired.
\end{proof}

\begin{ex}
Suppose $\G^1$ is the rectangular Gilbert tessellation studied in \cite{burridge2013full}. That is, each site of $\mathcal{P}$, there are four motorcycles that travel in directions the four directions north, south, east, west. Let $\G^k$ be the iterated Gilbert model. Then Theorem \ref{thm:main} states that $\G^k(\frac{1}{k})$ converges to the classical Poisson line process $\G^\infty$ with cylindrical measure $2\lambda \times (\delta_- \delta_|)$, 
where $\delta_-$ and $\delta_|$ are the Dirac delta measures at the points $0$ and $\pi/2$ on the unit circle, respectively. 
\end{ex}
  
\subsection{Iterated Gilbert with initial complex}\label{subsec:expanded}

We can generalize the iterated Gilbert model by replacing the initial
sites $\mathcal{P}$ by a germ-and-grain model, where at each site
in $\mathcal{P}$, one attaches an i.i.d. random polyhedral complex,
which contains vertices at which the motorcycles start.
Let $\G^0$ be the union of these initial polyhedral complexes,
called the initial complex. 

The general iterated Gilbert model $\G^k$ starting with $\G^0$ 
features, for each initial polyhedral complex, a collection of motorcycles
starting at some points of the complex. It is assumed that, for
each given polyhedral complex, an arm starting from this complex
never crosses the complex in questions again, nor any other
different arm of the polyhedral complex in question.
Each such motorcycle starts with a capital of $k$
lives and looses one live when
it crosses either another body of $\G^0$ or the path of a motorcycle
emanating from another body of $\G^0$. 
For a compact set $V \subset \R^2$,
define its \term{radius} to be the radius of the smallest ball 
containing $V$. If the radius of the polyhedral complex at each site
is at most $r > 0$ for some constant $r$, and has finitely many facets,
edges and vertices, then one can show that for each $k=1,2,\ldots$,
the general iterated Gilbert model $\G^k$ starting with $\G^0$ is
still a random mosaic with finite intensity. 

Note that in the presence 
of initial complexes,
the situation where one multiplies the intensity of $\mathcal P$ by
$\frac 1 k$ and that where one rescales space by $\sqrt{k}$ do not
coincide anymore. In the former case, initial complexes are not scaled, whereas they are in the latter case. In what follows, we consider the former interpretation, namely that of a Poisson point process of centroids with intensity $\frac \lambda k$ and no rescaling of the initial complexes.

We claim that $\G^0$ does not affect the scaling limit. In particular,
if the induced angle distributions on the motorcycles satisfy
the hypotheses of Theorem \ref{thm:main}, then Theorem \ref{thm:main}
holds unchanged. To see this, first, note that as $k \to \infty$, for a compact set $W \subset \R^2$,
$\G^0(\frac{1}{k}) \cap W \to \emptyset$.
Thus, points in $\G^0$ do not appear in the limit.
Second, we claim that $\G^0$ does not affect the argument
leading to Theorem \ref{thm:main} regarding the distance
a motorcycle can travel with $k$ lives. Indeed,
fix a motorcycle $\bar{b} = (b,\phi)$, and consider its number
of would-be killers on $b + [0,y\sqrt{k}\cdot \vect{\phi}]$.
On this interval, $\bar{b}$ can now hit lines in $\G^0$ and lose more lives.
But such lines must come from polyhedral complexes whose centroids
are within Euclidean distance $r$ of the line.
The number of such centroid is Poisson with mean $O(\sqrt{k})$,
with fluctuations of order $O(k^{1/4+\epsilon'})$. 
This is well-below the fluctuations $O(k^{1/2+\epsilon})$ of 
the number of would-be killers of $\bar{b}$,
and thus our argument for Theorem \ref{thm:main}
essentially goes through unchanged. 

\begin{thm}\label{thm:main2}
Let $\G^k(\mathcal{P},\mathcal{A})$ be an iterated Gilbert mosaic with initial complex $\G^0$, whose polyhedral complex at each site of $\mathcal{P}$ has radius at most $r > 0$. Then Theorem \ref{thm:main} applies. That is, as $k \to \infty$, for any compact window $W \subset \R^2$,
$$ \G^k \left(\frac{1}{k}\right)
\cap W \to \G^\infty\cap W \mbox{ in probability}, $$
where $\G^\infty$ is a Poisson line process with cylindrical measure
$\Lambda dr \times \Theta(d\theta)$, with
$$ \Lambda=  \sum_{\phi\in A}
w^*_\phi \sum_{Q\subset A, \phi\in Q} \mu_Q
$$
and $\Theta$ the probability measure
with mass 
$$\frac 1  \Lambda w^*_\phi \sum_{Q\subset A, \phi\in Q} \mu_Q$$
at $\phi^\perp$, for all $\phi\in A$,
where $w^\ast$ is defined by (\ref{eqn:constants}).
\end{thm}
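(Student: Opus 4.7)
The plan is to show that the two auxiliary results driving Theorem~\ref{thm:main}, namely Propositions~\ref{prop:concentrate} and~\ref{prop:cutoff}, continue to hold verbatim (with the same limiting constants $w^\ast$) in the presence of an initial complex $\G^0$. Once these are established, the proof of Theorem~\ref{thm:main} carries over word for word, since it only uses the angle set $A$, the site intensities $\mu_Q$, the concentration of motorcycle lengths around $w^\ast_\phi\sqrt{k}$, and the strip-cutoff for incoming motorcycles.

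The only point where $\G^0$ can possibly intervene is in the definition and bounds on the number of would-be killers $K(\bar b,y,k)$ of a motorcycle $\bar b=(b,\phi)$ on the segment $b+[0,y\sqrt{k}\vect{\phi}]$. With an initial complex, a motorcycle can also lose a life each time it hits an edge of $\G^0$ or an arm emanating from another initial complex. I would therefore replace $K$ by $K(\bar b,y,k)+K^0(\bar b,y,k)$, where $K^0$ counts the extra lives spent on initial-complex collisions. The key observation is that any initial complex whose edge can be hit on $b+[0,y\sqrt{k}\vect{\phi}]$ must have its centroid in a tubular neighborhood of width $2r$ around this segment, which has area $O(\sqrt{k})$. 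Since $\mathcal P$ has been rescaled to intensity $\lambda/k$, the mean number of such centroids is $O(k^{-1/2})$ when space is scaled by $\sqrt{k}$, and in the unscaled picture one gets a Poisson variable with mean $O(\sqrt{k})$, whose fluctuations are $O(k^{1/4+\epsilon'})$ by standard Chernoff bounds. I would then include the bound on $K^0$ via an additional application of Lemma~\ref{lem:sup.bound} to a tubular neighborhood, uniform over $b\in\sqrt{k}\cdot R$.

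The crucial point is to repeat the induction behind Lemma~\ref{lem:keybound} with this augmented kill count. At each step, the bounds $\underline K,\overline K$ are each off from the target $k$ by a gap of order $Ck^{1/2+\delta_i}$ coming from Lemma~\ref{lem:function.e}. Since $K^0$ contributes only $O(\sqrt{k})$ in mean and $O(k^{1/4+\epsilon'})$ in fluctuation, both terms are absorbed into this gap for any $\delta_i>0$, and the induction closes exactly as before. Hence Proposition~\ref{prop:concentrate} holds with the same constants $w^\ast$, and Proposition~\ref{prop:cutoff} follows from it by the same strip argument (the $O(\sqrt{k})$ tubular contribution is again negligible against the strip area $O(k)$ where relevant motorcycles must originate).

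Finally, for the convergence statement itself, I note that $\G^0\!\left(\tfrac{1}{k}\right)\cap W$ is contained in the union of balls of radius $r$ around the points of a Poisson process of intensity $\lambda/k$ on $\R^2$, so its expected Lebesgue measure in $W$ is $O(1/k)\to 0$ and it is empty with high probability on any compact $W$. Therefore $\G^0$ disappears from the scaling limit, and the proof of Theorem~\ref{thm:main} applied with the augmented kill bound above yields $\G^k(1/k)\cap W\to \G^\infty\cap W$ in probability, with the same cylindrical measure $\Lambda\,dr\times\Theta(d\theta)$ as in Theorem~\ref{thm:main}. The main obstacle is really the bookkeeping of the induction in Lemma~\ref{lem:keybound} in the enlarged setting, that is, checking that the extra $K^0$ term is uniformly negligible over $b\in\sqrt{k}\cdot R$ at every step of the induction; everything else is a direct transcription of the proof of Theorem~\ref{thm:main}.
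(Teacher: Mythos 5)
Your proposal is correct and takes essentially the same approach as the paper: the paper's own argument likewise notes that $\G^0\left(\frac{1}{k}\right)\cap W$ vanishes in the limit, and that the extra life-losses from the initial complexes come only from centroids lying in an $r$-tube around a motorcycle's path, a Poisson count with mean $O(\sqrt{k})$ and fluctuations $O(k^{1/4+\epsilon'})$, which is negligible against the margins in the would-be-killer estimates, so the proof of Theorem \ref{thm:main} carries over with the same $w^\ast$, $\Lambda$ and $\Theta$. Your explicit absorption of the $K^0$ term into the $Ck^{1/2+\delta_i}$ gap in the induction of Lemma \ref{lem:keybound} is, if anything, a slightly more careful bookkeeping of the same step the paper treats informally.
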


\section{Application: Poisson tropical plane curves}\label{sec:tropical}
This section provides some background on tropical geometry,
and discusses why the iterated Gilbert model is the right way
to study a process of tropical plane curves from the view point
of stochastic geometry. 

\subsection{Tropical polynomials}\label{subsec:trop-poly}
Consider the tropical min-plus algebra $(\bar{\R}, \odot, \oplus)$, where $\bar{\R} = \R \cup \{+\infty\}$, $a \odot b = a+b$, $a \oplus b = \min(a,b)$. A tropical polynomial $f$ in two variables has the form
\begin{equation}\label{eqn:f}
f(x,y) = \bigoplus_{i,j \in \mathbb{N}} c_{ij}\odot x^{\odot i} y^{\odot j} = \min_{i,j \in \mathbb{N}} (c_{ij} + ix + jy),
\end{equation}
where the coefficients $c_{ij} \in \bar{\R}$.
It is assumed that only finitely many $c_{ij}$'s
are finite (recall that $+\infty$ is the zero of $\oplus$, so that
this condition simply says that there are only finitely many non
zero-monomials).
As in classical algebra, the support of $f$ is 
$$\text{supp}(f) = \{(i,j) \in \mathbb{N}^2: c_{ij} < \infty\}.$$
The convex hull of the support of $f$ is the {\em Newton polygon of $f$}.
For each $c_{ij} < \infty$, the graph of each term
$(x,y) \mapsto c_{ij}\odot x^{\odot i} y^{\odot j}$ is
a plane in $\R^3$. Thus, the graph of $f$ is the minimum of
finitely many planes, and is piecewise affine,
see Figure \ref{fig:graphf}. The tropical zeros,
or \term{tropical variety} of $f$, denoted by $V_f$,
is the set of points $(x,y) \in \R^2$ where the minimum
in (\ref{eqn:f}) is achieved at least twice, or in
other words, points where $f$ is non-differentiable.
This definition of zeros allows many classical theorems
in algebra to carry over in the tropical setting.
For example, the Fundamental Theorem of Algebra
\cite[\S 1]{maclagan2015introduction} applies tropically,
meaning that the tropical polynomials can be factorized
into a product of affine terms based on its zeros.
A deeper result is the Fundamental Theorem of Tropical
Algebraic Geometry \cite[\S 3]{maclagan2015introduction},
which gives a correspondence between the zeros of tropical
polynomials and those of classical polynomials
when the former are obtained through tropicalization of
the latter over non-Archimedian fields. 

For $d \in \mathbb{N}$, $0 < d < \infty$, say that $f$
is \term{standard} with degree $d$ if its Newton polygon
is the triangle with vertices $(0,0)$, $(0,d)$ and $(d,0)$.
In this case, we say that $V_f$ is a standard tropical plane curve. 
Since the rest of the text is about such curves, for simplicity
we will refer to them as tropical curves. The restriction to standard
Newton polygons is fundamental to the results in this section,
since it ensures that the unbounded segments (arms) of the
tropical curve can only take on certain angles. 

\subsubsection{Tropical plane curves and duality}\label{subsec:plane.curves}

A tropical plane curve $V_f$ is a polyhedral complex. 
It is convenient to work with the dual of this complex. This is 
the regular subdivision of the negative of the Newton polygon of $f$
with lift given by the coefficients $c_{ij}$'s.
This fact holds for tropical hypersurfaces of arbitrary dimensions
\cite[Proposition 3.1.6]{maclagan2015introduction}.
For simplicity we only state the definitions for the case of plane curves.

\begin{figure}[h]
\includegraphics[width = 0.8\textwidth]{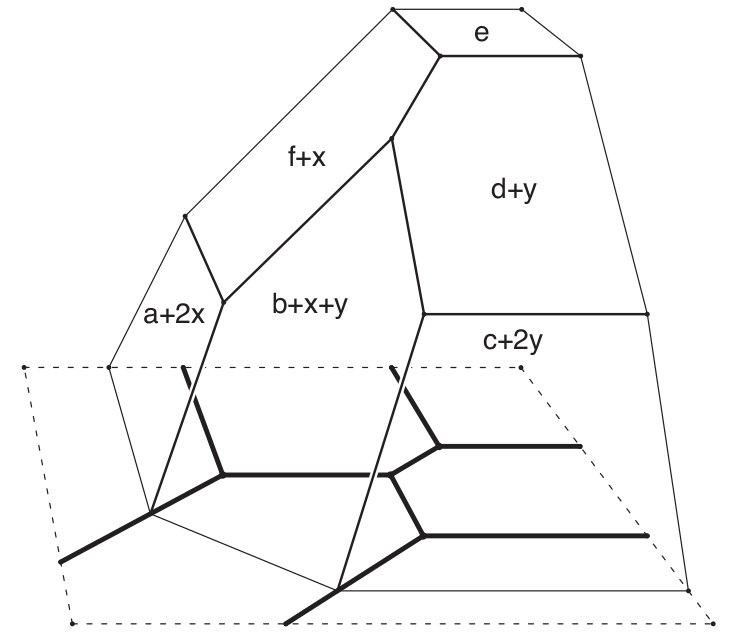}
\caption{The graph and the tropical plane curve of degree $2$
defined by $f(x,y) = a \odot x^{\odot 2} \oplus b \odot x \odot y \oplus c \odot y^{\odot 2} \oplus d \odot y \oplus e \oplus f \odot x$. Figure taken from \cite[Figure 1.3.2]{maclagan2015introduction}. The graph is the three-dimensional structure. The tropical curve is the projection of the non-differentiable points of this three-dimensional structure on the plane.}
\label{fig:graphf}
\end{figure}

We now define this regular subdivision,
which is illustrated in Figure \ref{fig:duality}.
For each integer point $(i,j)$ of the Newton polygon, one `lifts'
it up to height $c_{ij}$; one then takes the 
convex hull of the points $(i,j,c_{ij}) \in \R^3$, reflects the figure
about the origin in the $(i,j)$ plane, and finally projects the
lower faces of this convex hull back down to the plane.
This is the regular subdivision aforementioned.

To see its connection with $V_f$, define $g := -f$,
let $\hat{g}: \R^2 \to \R$ be its Legendre transform
$$ \hat{g}(u,v) = \sup_{(x,y) \in \R^2}(ux + vy - g(x,y)). $$
By a direct calculation, one finds that the graph of $\hat{g}$
is the lower convex hull of the set of points 
$\{(-(i,j),c_{ij}): (i,j) \in \text{supp}(f)\} \subset \R^3$.
Its projection onto $\R^2$ hence forms the regular subdivision
of $-\text{Newt}(f)$ with lift $c_{ij}$'s. 
By duality of the Legendre transform, one gets that $\hat{\hat{g}} = g$.
By a definition chase, one finds that this implies that the polyhedral
complex dual to the regular subdivision is precisely
the tropical plane curve defined by $f$. Figure \ref{fig:tline} gives a second illustration of this duality.

\begin{figure}[h]
\includegraphics[width = 0.8\textwidth]{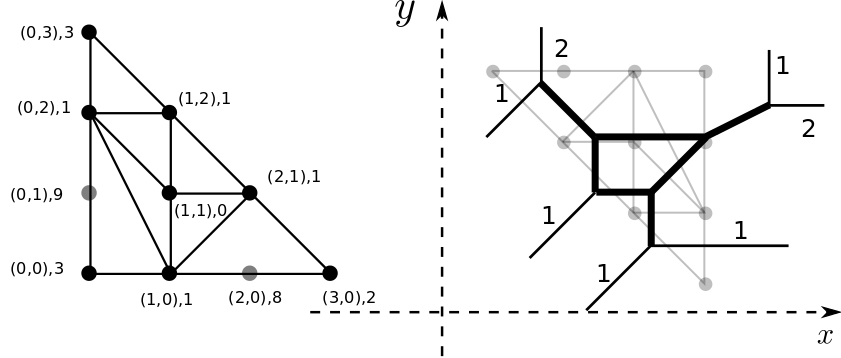}
\caption{Here $f(x,y) = 3\odot y^{\odot 3} \oplus 1\odot y^{\odot 2} \oplus 1\odot x\odot y^{\odot 2} \oplus 9\odot y \oplus x\odot y \oplus 1\odot x^{\odot 2}\odot y \oplus 3 \oplus 1\odot x \oplus 8\odot x^{\odot 2} \oplus 2\odot x^{\odot 3},$ a standard tropical curve of degree $3$. Left:
the Newton polygon $\text{Newt}(f)$ with the lift $c_{ij}$'s.
The two lattice points $(0,1)$ and $(2,0)$ are not vertices of the 
lower convex hull of the lifted points, and hence not vertices
of the regular subdivision. These two points are shown in gray. 
Right: the tropical plane curve with $-\text{Newt}(f)$ shown in gray.
The body of the curve is shown in bold.
The arms are shown in black, with numbers indicating their multiplicities.
}
\label{fig:duality}
\end{figure}

\begin{figure}[h]
\includegraphics[width=0.8\textwidth]{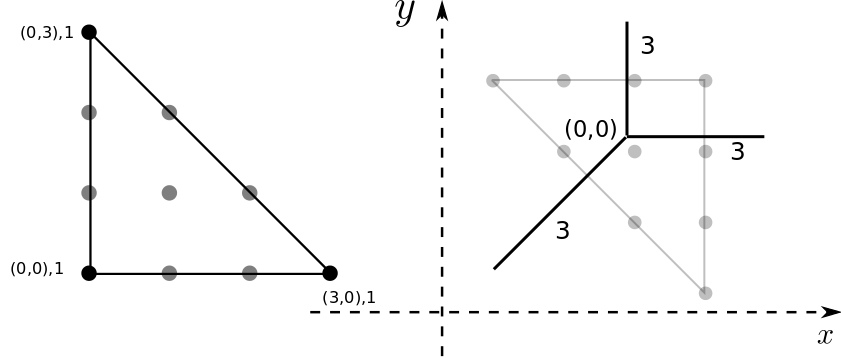}
\caption{The Newton polygon (left) and the tropical plane curve (right) defined by the polynomial $f_1(x,y) = x^{\odot 3} \oplus y^{\odot 3}\oplus 0$. As a set, this curve equals the tropical line given by $f_2(x,y) = x \oplus y \oplus 0$. However, each arm of this curve has multiplicity $3$, while each arm of the tropical line only has multiplicity $1$.} \label{fig:tline}
\end{figure}

Unlike in classical algebra, 
even up to trivial scaling of coefficients,
a tropical polynomial is not uniquely
determined by its set of zeros, or equivalently by its plane curve.
Here is however an observation that will be used later.
Let $g$ be a tropical polynomial
with $ij$-th coefficient equal to
$c_{ij}(g)$. Let $x^*$ and $y^*$ be real numbers.
Let $f$ be the tropical polynomial with $ij$-th coefficient
defined by
$$ c_{ij}(f) = \left\{ 
\begin{array}{ccc} 
c_{ij}(g) + ix^* + jy^* & \mbox{ if } & c_{ij}(g) < \infty \\
\infty & \mbox{ if } & c_{ij}(g) = \infty. 
\end{array} \right. $$
Any zero $(x,y)$ of $g$ can be mapped in
a bijective way to a zero $(\widetilde x, \widetilde y)$ of $f$
through the tropical linear transformation
$(x,y)=(x^*\odot \widetilde x, y^*\odot \widetilde y)$.
Hence, the zeros of $g$ are tropically scaled versions of
those of $f$, with the scaling coefficients determined by
$(x^*,y^*)$.

\subsubsection{Tropical plane curves as sets}
Let $f$ be a standard tropical polynomial with finite degree $d$. Its corresponding tropical curve $V_f$ is a closed set, or more precisely, a polyhedral complex in $\R^2$. It is the union of unbounded half-lines called \term{arms},
 denoted by $a(V_f)$, and a connected of set of line segments. We call the union of the later set of line segments the \term{body}, denoted by $b(V_f)$. Vertices, half-lines and line segments of $V_f$ are collectively called its \term{polyhedral facets}. The \term{multiplicity} $m(\sigma)$ of an arm or line segment $\sigma$ of $V_f$ is the lattice length of the edge of the regular subdivision of the Newton polygon of $f$ that is dual to $\sigma$. If $p \in \R^2$ is a common zero of the polynomials $f_1$ and $f_2$, formed by the intersection of polyhedral facets $\sigma_1 \subset V_{f_1}$ and $\sigma_2 \subset V_{f_2}$, then the multiplicity of $p$ is $m(p) = m(\sigma_1) m(\sigma_2)$. See \cite[\S 3]{maclagan2015introduction} for further details.

Say that an arm is \term{horizontal} (resp. \term{vertical} and \term{diagonal}) if it is parallel to the $(0,1)$ (resp. $(0,1)$ and $(1,1)$) direction, respectively. An important property of standard tropical plane curves of degree $d$ is that they have precisely $d$ arms of each of these three types. This is not necessarily true for non-standard tropical plane curves.

\begin{lem}\label{lem:arms}
Let $V_f$ be a tropical curve of degree $d$. Then an arm of $V_f$ can only have slope parallel to the $(0,1), (1,0)$ or $(1,1)$ direction. Furthermore, counting multiplicities, $V_f$ has precisely $d$ arms of each type.
\end{lem}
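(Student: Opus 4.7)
My plan is to reduce the claim to a lattice-length computation on the boundary of the Newton polygon via the duality recalled in Section~\ref{subsec:plane.curves}. Recall that $V_f$ is the polyhedral complex dual to the regular subdivision of $-\mathrm{Newt}(f)$ induced by the coefficients $c_{ij}$, where the Newton polygon is the triangle $\Delta_d$ with vertices $(0,0)$, $(d,0)$, $(0,d)$. Under this duality, bounded edges of $V_f$ are dual to interior edges of the subdivision, while the unbounded rays (arms) of $V_f$ are in bijection with the edges of the subdivision that lie on the boundary of $-\Delta_d$. Moreover, the direction of an arm is the outward unit normal to its dual boundary edge, and the multiplicity of the arm equals the lattice length of that edge.

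First I would make this bijection explicit. The three sides of $-\Delta_d$ are:
\begin{itemize}
\item the segment from $(0,0)$ to $(-d,0)$, with outward normal $\vec n_1 = (0,1)$;
\item the segment from $(0,0)$ to $(0,-d)$, with outward normal $\vec n_2 = (1,0)$;
\item the hypotenuse from $(-d,0)$ to $(0,-d)$, with outward normal proportional to $(-1,-1)$.
\end{itemize}
Any edge of the regular subdivision that sits on one of these three sides produces a dual arm in the direction of the corresponding normal. Since every boundary edge of the subdivision lies on exactly one of these three sides, and normals take only the three values above (up to positive rescaling), every arm of $V_f$ points in one of the three allowed directions: horizontal, vertical, or diagonal. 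This proves the first half of the lemma.

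For the counting statement I would argue side by side. Fix one side, say the segment from $(0,0)$ to $(-d,0)$. The subdivision restricts this segment to a partition into subintervals $I_1, \dots, I_r$ whose endpoints are a subset of the lattice points on the side. Each $I_s$ is an edge of the subdivision, and by definition its multiplicity as an arm of $V_f$ is its lattice length $m(I_s)$. The sum $\sum_s m(I_s)$ is simply the total lattice length of the side, which equals $d$ because the side runs between two lattice points at lattice distance $d$. The same argument applied to the other two sides (each of lattice length $d$) gives the count $d$ for vertical and diagonal arms, counted with multiplicity.

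The only potential subtlety I anticipate is making sure I am using the right notion of ``boundary edge of the subdivision'': a lattice point on the boundary that fails to be a vertex of the lifted lower convex hull simply gets absorbed into a longer boundary edge, and both the lattice length and the duality correspondence are unaffected. Once this is observed, the argument is a clean translation between boundary lattice-length on one side and sum of arm multiplicities on the other, and no further calculation is needed.
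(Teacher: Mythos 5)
Your proposal is correct and follows essentially the same route as the paper's proof: both use the duality identifying arms with boundary edges of the regular subdivision of the (negated) Newton polygon, deduce the three possible directions from the normals of the triangle's sides, and obtain the count $d$ per direction by summing lattice lengths (i.e.\ multiplicities) along each side of lattice length $d$. Your write-up is merely a more detailed version of the same argument, so no changes are needed.
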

\begin{proof}
Let $\sigma$ be an arm or line segment of $V_f$. Then $\sigma$ is an arm of $V_f$ if and only if it is dual to an edge on the boundary of the Newton polygon of $V_f$ in its regular subdivision. Thus, an arm of $V_f$ can only have slope perpendicular to the slopes of the boundary edges of the Newton polygon of $V_f$, which are $(0,1), (1,0)$ and $(1,-1)$. This proves the first statement. For the second, note that the total multiplicities of all horizontal arms equals the lattice length of the line segment $[(0,0), (0,d)]$, which is $d$. Thus, counting multiplicities, $V_f$ has $d$ horizontal arms. The vertical and diagonal cases are proven similarly. 
\end{proof}

\begin{defn}[Centroid function]
Let $\mathcal{C}$ be the set of compact sets in $\R^2$. A centroid function $c: \mathcal{C} \to \R^2$ is 
a measurable function such that
$$ c(C + y) = y + c(C), \hspace{1em} y \in \R^2, C \in \mathcal{C}, $$
where $C + y$ is the translated set $\{x+y: x \in C\}$. 
\end{defn}

Examples of centroid functions include the center of mass of the set, or its left-most point. Since the body of a tropical curve is compact, we define the centroid of a tropical curve to be the centroid of its body. By Lemma \ref{lem:arms}, an arm of a tropical curve can therefore be represented as a mark $(a, \phi) \in \R^2 \times \{0, \pi/2, 5\pi/4\}$, where $\phi$ is the angle of its ray with respect to the $(1,0)$ vector, and $a$ is the coordinates of its apex with respect to the centroid of the curve. We can thus identify $V_f$ as a pair $(b(V_f), a(V_f))$, consisting of a compact set $b(V_f)$, its body, and a set of marks $a(V_f)$, representing its arms. 
Let $\mathcal{V} \subset \mathcal{C} \times \R^2 \times \{0, \pi/2, 5\pi/4\}$ denote the set of all such pairs of compact sets and marks which represent some tropical curve $f$. 

\subsection{A Poisson class of tropical polynomials
in two variables}

The aim of this subsection is to introduce
the Poisson based ensemble of random tropical polynomials the common
zeros of which are to be analyzed below.
This ensemble can be viewed in two ways.
The first view point is
that of the collection of the zeros of all polynomials in the
ensemble. These can be seen as a translation 
invariant collection of random sets of the Euclidean plane,
where each such set is a piecewise-linear polyhedral complex.
The second view is that of the collection of tropical polynomials
themselves. As we show below, the latter can be seen as a collection
of tropical polynomials which is invariant by all tropical scale changes.
In this sense, this collection of tropical polynomials is a fractal.
In both view points, the setting features  
$\mathcal{F}$, a distribution on standard polynomials, and
$\mathcal{P}$, a homogeneous Poisson point process on ${\mathbb R}^2$
with points $T(p)=(x(p),y(p))$, numbered
with respect to their distance to the origin.

For the first view point, we see tropical curves as compact sets with marks
and the ensemble as an instance of the classical germ-and-grain model of
stochastic geometry.
Let $V_{\mathcal{F}}$ be the
distribution induced by $\mathcal{F}$
on tropical curves.
Let $\{V_p\}_{p\in \mathbb N}$ be an i.i.d. collection of grains
sampled using $V_{\mathcal{F}}$,
To each germ $T(p)=(x(p),y(p))$ and grain $V(p)\subset {\mathbb R}^2$,
we associate
$$ W(p):= V(p) + T(p), \ p\in \mathbb N.$$ 
This collection of curves, is hence a germ grain model, and is
translation invariant by construction.

For the second view point,
let $\{f_p\}_{p\in \mathbb N}$ be an i.i.d. collection
of polynomials sampled according to $\mathcal{F}$.
As explained above, if 
the tropical polynomial
$$ f_p(x,y) 
= \min_{i,j\in \mathbb N} \left( c_{i,j}(p) + ix + jy\right)
= \bigoplus_{i,j \in \mathbb{N}} c_{ij}(p)\odot x^{\odot i} y^{\odot j}
$$
admits the plane curve $V(p)$,
then the tropical polynomial $g_p$ defined by
\begin{eqnarray*} g_p(x,y) & = &
\min_{i,j\in \mathbb N} \left( c_{i,j}(p) + i(x-x(p)) + j(y-y(p))\right)\\
& = & \bigoplus_{i,j \in \mathbb{N}} c_{ij}(p)\odot 
\left(x \odot x(p)^{\odot -1}\right)^{\odot i} \left( y \odot y(p)^{\odot -1}\right)^{\odot j}\\ & = &
f_p(x \odot x(p)^{\odot -1},y \odot y(p)^{\odot -1}),  
\end{eqnarray*}
admits the plane curve $W(p)=T(p)+V(p)$.
Here $\cdot^{\odot -1}$ denotes the inverse of 
tropical multiplication, namely $a^{\odot -1}=-a$.
The polynomials $\{g_p\}$, which form our ensemble, are hence obtained
from the i.i.d.
polynomial $\{f_p\}$ by tropical rescaling of space, where
the rescaling coefficients used for $f_p$ are $x(p)^{\odot -1}$
on the $x$ coordinate
and $y(p)^{\odot -1}$ on the $y$ coordinate, with $(x(p),y(p))$ the
coordinates of $T(p)$.

By the same argument as above, the fact that the germ-grain model
$\{W(p)\}_{p\in \mathbb N}$
is translation invariant (has a distribution which is invariant
by the translation by $t=(u,v)$ for all $t\in {\mathbb R}^2$)
can be rephrased by saying
that the family of tropical polynomials $\{g_p\}_{p\in \mathbb N}$
introduced above
is {\em scale invariant} in the tropical sense,
namely the ensemble of polynomials $\{g_p(x,y)\}_{p\in \mathbb N}$
has the same distribution as the ensemble
$\{g_p(x\otimes u^{\odot -1},y \otimes v^{\odot -1})\}_{p\in \mathbb N}$
for all $(u,v)\in {\mathbb R}^2$.

\subsection{Common zeros of the Poisson ensemble and iterated Gilbert mosaics}

Consider the germ grain ensemble defined above.
Since the germs are in general positions, any pair of plane curves
$W(p)=T(p) + V_f(T(p)), W(q)=T-q + V_f(T(q))$ will a.s. intersect
at finitely many points. These intersections, which are the common zeros to the
corresponding pair of tropical polynomials,
are of three types:
\begin{enumerate}
\item arm-arm: intersection of an arm of $W(p)$ and an arm of $W(q)$;
\item arm-body: intersection of an arm of $W(p)$ and the body of $W(q)$,
or the symmetrical situation;
\item body-body: intersection of the body of $W(p)$ and that of $W(q)$.
\end{enumerate}
As the initial curves
are stationary, intersections of each type form stationary sets.
The main of this subsection is to leverage the scaling 
law of the Gilbert model to study certain asymptotic
properties of these sets in the regime where the intensity
of $\mathcal P$ tends to 0 like $\frac \lambda k$ with $\lambda$
constant and $k$ tending to infinity.
In this regime, we will discuss
the scaling properties of the point process  
$\mathcal{I}_k$ of arm-arm intersections of order less than $k$,
and those of the set $\mathcal{J}_k$ of arm-body intersections
that a typical body has with arms of order less than $k$.

\subsubsection{Arm-arm common zeros}
In the classical (non-tropical) setting,
the intersection process of Poisson lines in ${\mathbb R}^2$
is a point process with finite intensity.
However, for plane curves of
Poisson tropical polynomials (and even for those of tropical lines),
the set $\mathcal{I}$ of arm-arm intersections
is not the support of a point process 
(cf. Proposition \ref{prop:no.union} below). 
Hence the need for a refinement
of common zeros through their order.

The iterated Gilbert model assigns to each common zero of a
pair of tropical polynomials such an order, which generally indicates
its proximity to the centroids. The variant used is that with
initial complex as considered in Section \ref{subsec:expanded}. 
Vertices of the $k$-th mosaic
$\G^k$ consist of all common zeros of order at most~$k$,
denoted by $\mathcal{I}_k$, and all vertices of $\G^0$.
The sequence $\mathcal{I}_k$ is an increasing family of
stationary sets which are supports of point 
processes, and which tend to $\mathcal{I}$ as $k$ tends to infinity.

To each marked point $(a,\phi)$ on $V_f(T)$, where $T$ is a point of 
$\mathcal P$,
introduce a motorcycle $(a,\phi)$.
Let $$\G^0 := \bigcup_{T \in \mathcal{P}} b(V_f(T))$$ 
be the initial complex consisting of the bodies of the tropical plane curves.
For $k = 1, 2, \ldots$, let $\G^k(\mathcal{F}, \mathcal{P})$
denote the $k$-th order iterated Gilbert model starting from
initial complex $\G^0$, with the given motorcycles. 

\begin{lem}
Suppose $\mathcal{F}$ is a distribution on standard tropical
polynomials, with expected degree $D < \infty$ and coefficient
differences bounded by some absolute constant.
Then $\G^k(\mathcal{F}, \mathcal{P})$ is an iterated Gilbert mosaic. 
\end{lem}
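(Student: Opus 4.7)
The plan is to cast this germ-and-grain construction into the framework of the iterated Gilbert with initial complex of Section \ref{subsec:expanded} and then invoke Theorem \ref{thm:main2}. Concretely, at each site $T \in \mathcal{P}$ I would sample $f_T \sim \mathcal{F}$, take the translated body $b(V_{f_T}) + T$ as the initial polyhedral complex attached at $T$, and introduce, for each arm of $V_{f_T}$, a motorcycle at its apex pointing in the arm's direction. By Lemma \ref{lem:arms}, every arm has direction in the finite set $A = \{0, \pi/2, 5\pi/4\}$, and each of these three directions occurs with positive probability since a standard polynomial has arms of all three types; they are pairwise non-antiparallel, so the no-parallel-line assumption of Proposition \ref{lem:racs} holds.

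Next I would verify the two hypotheses of Proposition \ref{prop:gk.mosaic} in this generalized setting. The no-isolated-sites condition is immediate because every grain emits at least three motorcycles. The convex-sites condition holds at every vertex $v$ of $\G^0$ at which motorcycles depart: the rays emanating from $v$ (both body edges and arms) are dual to the boundary edges of a convex $2$-cell of the regular subdivision of $-\mathrm{Newt}(f_T)$, and each region they delimit at $v$ corresponds to a single linear monomial achieving the minimum, hence is a convex cone of angular aperture at most $\pi$. Therefore cyclically adjacent rays at $v$ subtend angles at most $\pi$, as required.

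I would then check the two quantitative inputs of Theorem \ref{thm:main2}: uniformly bounded grain radius and finite grain complexity. Up to adding a constant to $f$, which leaves $V_f$ unchanged, I may assume $\min_{(i,j) \in \mathrm{supp}(f)} c_{ij} = 0$, so that all coefficients lie in $[0, C]$. Because $f$ is standard, the three corner monomials at $(0,0), (d,0), (0,d)$ are in the support, and a direct estimate shows that for $(x,y)$ outside a disk of radius $O(C)$ only two of these three corner monomials can jointly attain the minimum, so $V_f$ outside that disk consists of three disjoint rays in the directions of $A$. Hence $b(V_f)$ lies in a ball of radius $r = O(C)$ independent of the realization and of $D$. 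Finite grain complexity is automatic since $\mathbb{E}[D] < \infty$ forces $D$ to be almost surely finite, and the regular subdivision of the Newton polygon of a degree-$D$ polynomial has at most $\binom{D+2}{2}$ cells.

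With these inputs, the exponential stabilization argument of Proposition \ref{lem:racs}, adapted to the initial complex setting as in Section \ref{subsec:expanded}, shows that $\G^k(\mathcal{F}, \mathcal{P})$ is a well-defined random closed set, and Proposition \ref{prop:gk.mosaic} upgrades it to a random mosaic of the plane with compact convex facets; by definition, this is an iterated Gilbert mosaic. The main obstacle is the uniform grain-radius bound: the hypothesis on bounded coefficient differences must be converted into a geometric statement that prevents the body from spreading to infinity even when the random degree $D$ is large, and this rests on the structural observation above that only the three corner monomials survive far from the origin.
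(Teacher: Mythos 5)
Your overall strategy is the paper's: the paper's proof of this lemma is exactly ``check the hypotheses of Proposition~\ref{prop:gk.mosaic} (and the requirements of the initial-complex model of Section~\ref{subsec:expanded}: uniformly bounded grain radius, finitely many faces)'', with the only substantive input being the uniform bound on the radius of the body, which the paper isolates as Lemma~\ref{lem:body.bound}. Your verification of the angle conditions is fine: by Lemma~\ref{lem:arms} all arm directions lie in $\{0,\pi/2,5\pi/4\}$, every grain emits at least three motorcycles, and local convexity at each vertex of $\G^0$ holds because the sectors between consecutive edges/arms at a vertex of $V_f$ are contained in the (convex) linearity regions of $f$, hence have aperture at most $\pi$. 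Finiteness of the grain complexity from $\E[D]<\infty$ is also fine.

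The gap is in the step you yourself flag as the main obstacle, the radius bound. The structural claim you base it on is false: it is not true that outside a disk of radius $O(C)$ the curve ``consists of three disjoint rays'' with only corner monomials surviving. A degree-$d$ standard curve can have up to $d$ \emph{distinct} parallel arms in each of the three directions, and along such arms non-corner monomials attain the minimum arbitrarily far from the origin. Concretely, take $d=2$ with $c_{00}=c_{10}=0$, $c_{20}=C$ and the remaining coefficients in $[0,C]$ large: there are two distinct upward vertical arms, at $x=0$ (tie between the monomials $(0,0)$ and $(1,0)$) and at $x=-C$ (tie between $(1,0)$ and $(2,0)$), so far from the origin the minimum is attained by pairs that need not be corners, and the curve is not three rays. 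More importantly, even if your statement about corner monomials were granted, it does not imply the conclusion: what must be excluded is a \emph{vertex} of the body far from the origin, i.e.\ a point where three affinely independent, possibly non-corner, monomials tie. Ruling this out is exactly where the bounded-difference hypothesis has to be used quantitatively: a tie between $(i_1,j_1)$ and $(i_2,j_2)$ at $(x,y)$ forces the integer relation $(i_1-i_2)x+(j_1-j_2)y=c_{i_2j_2}-c_{i_1j_1}\in[-C,C]$, and one must combine two independent such relations with the fact that the three corner monomials are present with coefficients within $C$ of the minimum (otherwise one only gets a bound of order $Cd$, not $O(C)$; without the corner monomials a far vertex at distance of order $Cd$ is in fact constructible). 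This is the content of the paper's Lemma~\ref{lem:body.bound}, which your sketch replaces by an incorrect picture; as written, the key step of your proof does not go through and needs that argument (or a citation of Lemma~\ref{lem:body.bound}) in its place.
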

\begin{proof}
It is straight-forward to check that $\G^k(\mathcal{F}, \mathcal{P})$ 
satisfies the assumptions listed in Proposition \ref{prop:gk.mosaic}.
\end{proof}

It follows from Theorem \ref{thm:main2} that the sequence of mosaics
$\{ \G^k\left(\frac{1}{k}\right), k = 1, 2, \ldots\}$
has a limit in probability, and the rescaled sequence of common zeros
$\{\sqrt{k}\cdot \mathcal{I}_k, k = 1, 2, \ldots\}$
converges in probability to the process of intersections of $\G^\infty$.

\begin{thm}\label{thm:main.tropical}
Let $\mathcal{F}$ be a distribution on standard tropical polynomials, with finite expected degree and coefficient differences bounded by some absolute constant. 
Let $D_-, D_|, D_/$ be the expected number of arms in the directions spanned by vectors $(0,1)$, $(1,0)$ and $(1,1)$, respectively.
Let $\mathcal{P}$ be a homogeneous Poisson point process with intensity $\lambda$ on $\R^2$. 
Let $\mathcal{G}^k(\mathcal{F}, \mathcal{P})$ be the $k$-th order tropical plane curves mosaic. Let $\delta_-$, $\delta_/$ and $\delta_|$ be the Dirac delta measures at the points $0$, $\pi/4$ and $\pi/2$ on the unit circle, respectively. Let $W$ be a compact set in $\R^2$. As $k \to \infty$,
$$ \G^k\left(\frac 1 {k}\right)\cap W\stackrel{P}{\to} \G^\infty \cap W,$$
where $\G^\infty$ is the classical Poisson line process with cylindrical measure $\lambda \times (D_-\mu_- \delta_- + D_|\mu_| \delta_| + D_/\mu_/ \delta_/)$, 
where $\mu_- = \mu_| = \frac{2^{3/4}}{\sqrt{1 + \sqrt{2}}}$, and $\mu_/ = \left(\frac{\sqrt{2} + 3}{4}\right)\mu_-$.
\end{thm}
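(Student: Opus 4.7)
The plan is to apply Theorem~\ref{thm:main2} to the tropical ensemble and then to evaluate the resulting cylindrical measure explicitly. By the preceding lemma, $\G^k(\mathcal{F},\mathcal{P})$ is an iterated Gilbert mosaic whose initial complex is the union of the bodies of the curves; the boundedness of coefficient differences ensures that these bodies have uniformly bounded radius, so the hypotheses of Theorem~\ref{thm:main2} are satisfied. That theorem then produces convergence in probability, on any compact window, to a Poisson line process $\G^\infty$ with cylindrical measure $\Lambda\,dr\times\Theta(d\theta)$, and the remaining task is to make $\Lambda$ and $\Theta$ explicit and match them with the stated measure.

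By Lemma~\ref{lem:arms}, every standard tropical curve has arms only in the three directions $\phi_-,\phi_|,\phi_/$ along the vectors $(1,0)$, $(0,1)$, and $(-1,-1)/\sqrt 2$. Each arm is treated as a motorcycle emanating from an apex on the bounded body of its curve; since these apexes differ from the centroid by an $O(1)$ amount that is washed out under the $\sqrt k$ rescaling, the effective intensity of motorcycles in direction $\phi_\epsilon$ is $\lambda D_\epsilon$ for $\epsilon\in\{-,|,/\}$. Hence in the notation of Theorem~\ref{thm:main2} one has $\sum_{Q\ni \phi_\epsilon}\mu_Q = \lambda D_\epsilon$, and the mass of $\Theta$ at $\phi_\epsilon^\perp$ is proportional to $\lambda D_\epsilon w^\ast_\epsilon$. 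Matching term by term with the announced measure reduces the proof to computing $w^\ast_-,w^\ast_|,w^\ast_/$ from the defining relations $\mathcal{E}(w^\ast,\phi_\epsilon)=1$ and identifying them with the stated $\mu_-,\mu_|,\mu_/$.

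The reflection $(x,y)\mapsto(y,x)$ leaves the distribution of $\mathcal{F}$ invariant while swapping $\phi_-\leftrightarrow\phi_|$ and fixing $\phi_/$, which forces $w^\ast_-=w^\ast_|$. Set $w:=w^\ast_-$ and $u:=w^\ast_/$. The angle between $\phi_-$ and $\phi_|$ is $\pi/2$, while that between $\phi_-$ (or $\phi_|$) and $\phi_/$ is $3\pi/4$, so the area of $T^{w,w}_{\phi_-\phi_|}$ equals $\tfrac{1}{2}w^2$, while the area of $T^{w,u}_{\phi_-\phi_/}$ equals $\tfrac{w^2}{2\sqrt 2}$ if $u\ge w$ and $(uw-\tfrac{u^2}{2})/\sqrt 2$ otherwise. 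Substituting into \eqref{eqn:defn.e}, the two equations $\mathcal{E}(w^\ast,\phi_-)=1$ and $\mathcal{E}(w^\ast,\phi_/)=1$ become a coupled pair of polynomial equations in $(w,u)$, modulo the triangle-versus-trapezium case split.

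The main obstacle is the explicit solution of this system. Under the natural ansatz $u\ge w$, the first equation is linear in $w^2$ and yields $w^2(1+1/\sqrt 2)\lambda D=2$, so that $w=2^{3/4}/\sqrt{(1+\sqrt 2)\lambda D}$, matching the stated value of $\mu_-$ up to the absorbed $(\lambda,D)$ normalization. Substituting this into the second equation (whose relevant region is trapezoidal since $u>w$) yields $\sqrt 2\,\lambda D(uw-w^2/2)=1$, which simplifies to $u/w=(\sqrt 2+3)/4>1$, confirming the ansatz a posteriori and producing $\mu_/=\tfrac{\sqrt 2+3}{4}\mu_-$. Plugging $w^\ast_\epsilon=\mu_\epsilon$ back into the formulas of Theorem~\ref{thm:main2} for $\Lambda$ and $\Theta$ gives the announced cylindrical measure, completing the proof.
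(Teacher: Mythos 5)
Your overall route is the same as the paper's: identify $\G^k(\mathcal{F},\mathcal{P})$ as an iterated Gilbert mosaic whose initial complex is the union of the curve bodies, and deduce the convergence from Theorem~\ref{thm:main2} (the paper likewise treats the degree-one case by Theorem~\ref{thm:main} and the general case by Theorem~\ref{thm:main2}). You go further than the printed proof in one respect: you actually solve the system $\mathcal{E}(w^\ast,\phi)=1$ for the three arm directions, and your triangle/trapezium areas, the linear equation $w^2(1+1/\sqrt{2})\lambda D=2$, and the a posteriori check $u/w=(\sqrt{2}+3)/4>1$ are correct and do recover the stated constants $\mu_-=2^{3/4}/\sqrt{1+\sqrt{2}}$ and $\mu_/=\frac{\sqrt{2}+3}{4}\mu_-$; this computation is nowhere displayed in the paper, so it is genuine added value.

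Two caveats. First, the claim that bounded coefficient differences force a uniformly bounded body is precisely the content of Lemma~\ref{lem:body.bound} and is not automatic; you assert it. The paper proves it by duality: a vertex of $V_f$ whose dual cell contains an edge of the boundary segment of $\mathrm{Newt}(f)$ in direction $(0,1)$ satisfies $x=(c_{i_10}-c_{i_20})/(i_2-i_1)$, hence $|x|\le C$, and the analogous bounds for the other two boundary directions confine the body to a fixed triangle; you should reproduce or at least invoke this short argument. Second, your evaluation of $w^\ast$ assumes that $\mathcal{F}$ is invariant under $(x,y)\mapsto(y,x)$ and, more importantly, uses a single effective intensity $\lambda D$ in all three directions, i.e. $D_-=D_|=D_/$. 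Neither is among the hypotheses: the expected numbers of distinct arms (not counted with multiplicity, which is all the random closed set sees) in the three directions can differ, the equations $\mathcal{E}(w^\ast,\phi_\epsilon)=1$ then carry the direction-dependent intensities $\lambda D_\epsilon$, and their solution depends on the ratios $D_-:D_|:D_/$, so the universal constants only emerge in the symmetric case. The paper's own proof computes explicitly only the degree-one case, where $D_-=D_|=D_/=1$, and is silent on this point, so your derivation matches its spirit; but a careful write-up should state this restriction explicitly, and should also address squarely the $\sqrt{\lambda D}$ versus $\lambda D$ normalization that you set aside as an "absorbed" factor rather than reconcile with the statement.
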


Let $D$ be the expected degree of a tropical polynomial distributed as $\mathcal{F}$. By Lemma \ref{lem:arms}, $D_-, D_/, D_| \leq D$, and thus $D < \infty$ implies that the three constants $D_-, D_/, D_|$ are finite also. Note that we view the $k$-th mosaic $\mathcal{G}^k$ as a random closed set, that is, we do not take into account the multiplicities of the arms. One cannot read off the multiplicity of an intersection in the limiting process, since doing so would have required the knowledge about the initial complex that the arms came from. However, one can still speak of the average multiplicity. By Lemma \ref{lem:arms}, the average multiplicities of arms in directions $(0,1)$, $(1,0)$ and $(1,1)$ are $D/D_-$, $D/D_|$ and $D/D_/$, respectively. By the Tropical B\'ezout's theorem \cite{maclagan2015introduction} , in expectation, the intersections of type $- \cap |$, $| \cap \ $ and $- \cap \ $ intensify with order $k\frac{D^2}{D_-D_|}$, $k\frac{D^2}{D_|D_\ }$ and $k\frac{D^2}{D_-D_
 \ }$, respectively. 

\begin{proof}
When the polynomials all have degree 1, $D_- = D_| = D_/ = 1$ by  Lemma \ref{lem:arms}, and this is an application Theorem \ref{thm:main} with $M = 3$, $A = \{0, \pi/4, \pi/2\}$, and at each site of the Poisson point process, there are precisely three motorcycles, one in each direction in $A$. For the general case, the following lemma gives a bound on the radius of the body of $V_f$ based on the pairwise differences in the coefficients of $f$. The result then follows from Theorem \ref{thm:main2}.
\end{proof}

\begin{lem}\label{lem:body.bound}
Let $f$ be a standard tropical polynomial of degree $d$ with coefficients $c_{ij}$. Suppose that the pairwise differences of the coefficients of $f$ are bounded by some constant $C$ independent of $d$, that is, 
$$ |c_{ij} - c_{kl}| \leq C, $$
for all coefficients $c_{ij}, c_{kl} < \infty$ of $f$. Then there exists a constant $R(C)$ such that the radius of the smallest ball containing the body of $V_f$ is at most $R(C)$.
\end{lem}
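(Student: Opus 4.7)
The plan is to show that every vertex of $V_f$ lies within a ball of radius $O(C)$ around a fixed point; the body, being the union of bounded edges joining such vertices, then fits inside a ball of the same order. Throughout I would normalize so that all finite coefficients satisfy $c_{ij} \in [0, C]$.

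I would exploit the duality between $V_f$ and the regular subdivision of the standard Newton polygon $T_d = \{(i,j): i,j \geq 0,\ i+j \leq d\}$. Each vertex $v = (x,y)$ of $V_f$ is dual to a $2$-face $F$ of this subdivision whose vertices $(i_p, j_p) \in T_d$ satisfy a common value $c_{i_p j_p} + i_p x + j_p y = m$. Equivalently, the affine function $z_F(i, j) = m - ix - jy$ meets $c$ at vertices of $F$, satisfies $z_F(k, l) \leq c_{(k,l)} \leq C$ at every other lattice point of $T_d$, and has $-\nabla z_F = v$. The task reduces to bounding $|\nabla z_F|$ by a constant depending only on $C$.

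The key difficulty is that Cramer's rule applied to the vertex equalities of $F$ alone gives a bound of order $O(Cd)$, since $F$ may be a thin triangle spanning across $T_d$ (for instance $F = \{(0,0), (1,0), (d-1, 1)\}$). The cure is to use the face inequalities $z_F(0,0), z_F(d, 0), z_F(0, d) \leq C$ at the three corners of $T_d$. In the thin-triangle example, the equalities give $\nabla z_x = c_{(1,0)} - c_{(0,0)}$ and $\nabla z_y = c_{(d-1,1)} - c_{(0,0)} - (d-1)\nabla z_x$; the constraint at $(d,0)$ forces $\nabla z_x \leq C/d$ and together with the $(0,d)$ constraint forces $\nabla z_x \geq -O(C/d)$, whence the apparent $O(Cd)$ term in $\nabla z_y$ collapses to $O(C)$. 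I would argue that the same cancellation happens for every face $F$: each distant corner of $T_d$ produces a linear inequality in $\nabla z_F$ whose coefficient scales like $d$, pinning some gradient component to $O(C/d)$, and the vertex equalities then bound the remaining component by $O(C)$.

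To organize the argument cleanly I would split on whether $F$ contains an interior lattice point of $T_d$ (the easy case, where the four-neighbor lattice points of $(i_p, j_p)$ all lie in $T_d$ and yield $|\nabla z_x|, |\nabla z_y| \leq C$ from local inequalities) or only boundary vertices (the harder case handled by the corner-constraint argument above). The main obstacle is precisely the $d$-independence of the bound: one must invoke the global structure of $T_d$, because no purely local subdivision argument can beat $O(Cd)$. Once $|v| \leq O(C)$ uniformly over vertices, bounded edges of $V_f$ have length at most the diameter of the vertex set, so $b(V_f)$ lies in a ball of some radius $R(C)$ centered at its centroid, completing the proof.
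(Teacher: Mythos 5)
Your dual set-up is the right one, and your diagnosis of the difficulty is accurate: the equalities coming from the face $F$ alone only give $|\nabla z_F|=O(Cd)$, and the $d$-independence has to come from standardness, i.e.\ from the fact that the three corners $(0,0),(d,0),(0,d)$ of $T_d$ carry finite coefficients, so that $z_F\leq C$ there; your thin-triangle computation is correct. The gap is the sentence ``I would argue that the same cancellation happens for every face $F$'': that assertion \emph{is} the content of the lemma's $d$-independence, and the mechanism you describe is not established in general. A corner inequality such as $z_F(d,0)\leq C$ constrains the \emph{value} of $z_F$ at that corner, not its gradient; it pins a gradient component to $O(C/d)$ only after being combined with an anchor point of $F$ at which $z_F\geq 0$ and which is favourably placed relative to that corner (in your worked example the anchor is $(0,0)\in F$). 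For a face all of whose vertices lie on $\partial T_d$ but far from the corners, which component is ``pinned'' and by which combination of equalities and corner inequalities changes from configuration to configuration, so the hard case is asserted rather than proven. A second, smaller inaccuracy: $z_F(k,l)\leq c_{kl}\leq C$ is only available at lattice points in the support of $f$, and standardness does not force the support to be all of $T_d$; in particular the four-neighbour bound in your easy case invokes $z_F\leq C$ at points whose coefficients may be $+\infty$.

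Both problems are repaired by one observation that also removes your case split. Since $z_F\leq C$ at the three corners and $z_F$ is affine, $z_F\leq C$ on all of $T_d$ (every point of $T_d$ is a convex combination of the corners); and $z_F\geq 0$ on $F$ after your normalization, since $z_F=c\geq 0$ at the vertices of $F$. Because the vertices of $F$ are not collinear, some vertex $P_1$ has $i\geq 1$, some $P_2$ has $j\geq 1$, and some $P_3=(i,j)$ has $i+j\leq d-1$. Then $P_1-e_1\in T_d$ gives $0\leq z_F(P_1)=z_F(P_1-e_1)+\partial_i z_F\leq C+\partial_i z_F$, i.e.\ $x=-\partial_i z_F\leq C$; symmetrically $y\leq C$ from $P_2$; and $P_3+s(1,1)\in T_d$ for $s=(d-i-j)/2\geq 1/2$ gives $s\,(\partial_i z_F+\partial_j z_F)\leq C-z_F(P_3)\leq C$, hence $x+y\geq -2C$. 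So every vertex of $V_f$ lies in the triangle $\{x\leq C,\ y\leq C,\ x+y\geq -2C\}$, whose diameter depends only on $C$, and your concluding step (bounded edges join vertices) finishes the proof. Note this route differs from the paper's, which considers vertices of the body extremal in suitable directions, observes that their dual cells contain an edge on a side of the Newton polygon, and reads a coordinate bound off the coefficient difference along that edge; your corner-based argument, once completed as above, controls all vertices at once and makes the use of standardness explicit.
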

\begin{proof}
We shall prove that the set of vertices of $V_f$ is contained in the triangle with defining inequalities
\begin{align}
x & \leq C \label{eqn:body.bound.x} \\
y & \leq C \label{eqn:body.bound.y} \\
x - y & \geq -C. \label{eqn:body.bound.xy}
\end{align}
This would show that all line segments of the body of $V_f$ are also contained in this set, and thus proves the claim. 
Recall (cf. Section \ref{subsec:plane.curves}) that line segments and the arms of the tropical curves are normal to the edges of the Newton polygon of $f$. Consider $v_1 = (0,1)$, and let $(x,y)$ be a vertex of the body of $V_f$ supported by the hyperplane orthogonal to $v_1$. Then $(x,y)$ is dual to a cell of $Newt(f)$ that contains an edge of the form $((0,i_1), (0,i_2))$, for $0 \leq i_1 < i_2 \leq d$, $i_1, i_2 \in \mathbb{N}$. Thus,
$$ c_{i_10} + i_1 x = c_{i_20} + i_2 x, \Rightarrow x = \frac{c_{i_10} - c_{i_20}}{i_2 - i_1}. $$
Since $i_2 - i_1 \geq 1$,
$$ |x| = \frac{|c_{i_10} - c_{i_20}|}{i_2 - i_1} \leq C. $$
Thus, all points in the body of $V_f$ satisfy (\ref{eqn:body.bound.x}). A similar argument proves (\ref{eqn:body.bound.y}) and (\ref{eqn:body.bound.xy}).
\end{proof}

\subsubsection{Arm-body common zeros}
The scaling results obtained on ${\mathcal G}_k$ also
allow one to derive expressions for the asymptotic 
properties of the mean number of arm-body 
zeros of order $k$ per body.

The reference measure is now the Palm probability of $\mathcal P$,
which according to Slinyak's theorem, is the distribution
of $\mathcal P$ considered above, with an extra point added at the origin.
Equivalently, under the Palm setting, to the translation invariant
set of plane curves considered above,
one adds an independent plane curve centered at the origin. 
Condition on the fact that 
the body at the origin (or equivalently the typical body)
has a total segment length $l_-$, $l_/$, $l_|$, and $l_{o^1},\ldots,l_{o^i}$,
with the orientations $0$, $\pi/4$, $\pi/2$ and any other 
orientations $o^1,\ldots,o^i$ respectively.
For instance, for the example of Figure \ref{fig:duality},
there are two such directions, $o^1= -\frac \pi 4$ and
$o^2=\arccos(\frac 2{\sqrt{5}})$.
Then, when $\mathcal P$ has intensity $\frac {\lambda} k$,
the mean number of intersections of arms of order $k$ converges to
\begin{eqnarray}
M & = & 
\lambda 
\left(
l_- \left(D_| \mu_| + \frac 1 {\sqrt{2}} D_/\mu_/ \right)
+l_/\left(\frac 1 {\sqrt{2}} D_-\mu_- +\frac 1 {\sqrt{2}} D_|\mu_|\right) 
+l_|\left(D_-\mu_- +\frac 1 {\sqrt{2}} D_/\mu_/\right)
\right)\nonumber \\
& & + \lambda \sum_{j=1}^i l_{o^j}
\left(
\left|\sin(o^j - \frac \pi 2)\right|
D_| \mu_| 
+ 
\left|\sin(o^j)\right|
D_- \mu_- 
+
\left|\sin(o^j - \frac \pi 4)\right|
D_/\mu_/ \right)
\end{eqnarray}
when $k$ tends to infinity.
This formula follows from two results. The first
one is Theorem \ref{thm:main.tropical},
which, together with Slivnyak's theorem, implies that the process of
arms of order $k$ that cross the finite observation 
window containing the body in question converge
to a Poisson line process with the characteristics 
given in Theorem \ref{thm:main.tropical}. 
The second is the classical formula for the mean number of intersection
that a segment with a given orientation has with a translation
invariant (and non necessarily isotropic) Poisson line process.

The final formula is obtained by unconditioning with respect to the
distribution of the typical curve body.

\subsection{Stationary point processes of
tropical curves: lack of existence.}
We now give the justification for studying the tropical plane curves
process via the iterated Gilbert model, by showing that other
`natural' models fail to exist.
Any mechanism for generating the coefficients $c_{ij}$'s in (\ref{eqn:f})
randomly defines a distribution on tropical plane curves,
which is a distribution on random closed sets in $\R^2$.
Consider the goal of defining an appropriate 
`stationary collection of standard tropical curves',
whose set of common zeros (pairwise intersections) forms a 
stationary point process in $\R^2$. Here stationarity means invariance in law by classical translations, which translates to invariance in law by tropical scalings. In other words, we want to define a family of tropical polynomials whose set of roots form a tropical fractal: a set whose law is invariant under tropical scalings. 

By analogy with what exists for classical line processes, there are two natural ways to represent a random collection of tropical curves. The first is to view random tropical curves as random points in $\mathcal{C}$, the set of closed sets on $\R^2$. The second is to consider the union of a collection of tropical curves with stationary centroids as one large random polyhedral complex, and study its properties from the viewpoint of random mosaics of $\R^2$. Unfortunately, the following propositions state that neither such objects exist. 

\begin{prop}\label{prop:no.counting.process}
There exists no stationary, non-degenerate point processes of tropical curves with positive intensity. 
\end{prop}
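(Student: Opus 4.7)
The plan is to argue by contradiction: suppose $\Phi$ is a stationary point process on the space $\mathcal{V}$ of standard tropical curves with positive intensity $\lambda>0$, where non-degeneracy entails in particular that the typical curve has positive degree and hence actual arms. I will show that the set of arm-arm common zeros --- a subset of all common zeros --- already has infinite expected cardinality in every compact window, so it cannot be locally finite, contradicting the requirement that the common zeros form a stationary point process in $\R^2$.

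The geometric input is Lemma \ref{lem:arms}: every standard tropical curve carries at least one east-pointing arm (direction $(1,0)$) and at least one north-pointing arm (direction $(0,1)$). Fix $W=[-M,M]^2$ and let $N_1$ count the east arms of curves of $\Phi$ meeting $W$. Applying Campbell's formula to the stationary marked point process of (centroid, curve) pairs,
$$\E[N_1] \;=\; \lambda \int_{\R^2}\E^0\!\left[\mathbf{1}\{\text{east arm of the typical curve translated by }T\text{ meets }W\}\right]dT.$$
For any realization of the typical curve, its east arm is a ray $\{(x_0,y_0)+s(1,0):s\ge 0\}$, whose translate by $T$ meets $W$ iff $T_x+x_0\le M$ and $T_y+y_0\in[-M,M]$; this describes a half-strip in $T$-space of infinite Lebesgue area, regardless of the value of $(x_0,y_0)$. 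Hence the inner integral is $+\infty$ almost surely, so $\E[N_1]=+\infty$, and the same reasoning applied to north arms gives $\E[N_2]=+\infty$.

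To promote these infinities to infinitely many common zeros inside $W$, I use the elementary fact that an east ray based at $(x_0,y_0)$ and a north ray based at $(x_1,y_1)$ meet precisely when $x_0\le x_1$ and $y_1\le y_0$, at the point $(x_1,y_0)$. Restricting to east arms with $x_0\le -M$, $y_0\in[-M/2,M/2]$ and to north arms with $y_1\le -M$, $x_1\in[-M/2,M/2]$ --- both subfamilies still having infinite expected cardinality by the same half-strip argument --- every ordered pair from these two subfamilies intersects at a point of $[-M/2,M/2]^2\subset W$. Thus the expected number of arm-arm common zeros in $W$ is $+\infty$, whereas any locally finite stationary point process of zeros with intensity $\mu$ would give the finite value $\mu(2M)^2$; this is the required contradiction.

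The main obstacle to watch out for is the Campbell-formula step: a priori, the Palm distribution of the typical curve could place its east-arm apex $(x_0,y_0)$ arbitrarily far from the centroid, but this is not an obstruction since the half-strip in $T$-space has infinite area for \emph{every} fixed value of $(x_0,y_0)$, so no integrability hypothesis on the typical-curve distribution is needed and the argument is robust under very weak assumptions on the mark distribution. A minor technical point is that the typical curve may have several east or north arms; counting only one of each per curve is enough to get the required lower bound on $N_1, N_2$.
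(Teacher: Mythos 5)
There is a genuine gap, and it is twofold. First, you are contradicting the wrong hypothesis. Proposition \ref{prop:no.counting.process} assumes only that $\Phi$ is a point process of tropical curves in the sense of (\ref{eq:phi})--(\ref{eq:choquet}): almost surely, only finitely many curves $V_n$ meet any given compact $K\subset\R^2$. It does \emph{not} assume that the common zeros form a stationary point process in $\R^2$, nor that such a process would have finite intensity $\mu$ --- that is the subject of the separate Proposition \ref{prop:no.union}, and even there ``point process'' only demands a.s.\ local finiteness, not finite mean. So the conclusion you reach (``the expected number of arm--arm zeros in $W$ is $+\infty$'') does not clash with anything you were entitled to assume.

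Second, and more fundamentally, infinite expectation does not contradict almost sure finiteness: a random count can satisfy $\E[N_1]=+\infty$ while $N_1<\infty$ a.s.\ (and indeed, under (\ref{eq:choquet}) the number of east arms meeting $W$ \emph{is} a.s.\ finite, since each curve has finite degree, yet nothing prevents its mean from being infinite). Your Campbell-formula computation, besides tacitly assuming a finite intensity $\lambda$ for the centroid process (the paper's ``positive intensity'' is a different condition and does not supply finiteness), therefore cannot produce the required contradiction; nor does the product step $\E[N_aN_b]=\infty$ follow from $\E[N_a]=\E[N_b]=\infty$ without an independence or correlation argument. What is needed --- and what the paper's proof supplies --- is an upgrade from ``infinite in mean'' to ``infinite with positive probability'': one passes to the centroid process (apex of a horizontal arm), restricts it to the strip $\R\times[0,1]$, and invokes the dichotomy for stationary point processes on the line (Property 1.1.2 in \cite{baccelli2013elements}) to conclude that with positive probability the half-strip $(-\infty,0]\times[0,1]$ contains infinitely many centroids; each of these launches a horizontal arm crossing the fixed compact segment $[(0,0),(0,1)]$, and that violates (\ref{eq:choquet}) directly. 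Your geometric observations about east and north arms are fine, but without a stationarity/dichotomy argument of this kind the expectation bound alone does not prove the proposition.
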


\begin{prop}\label{prop:no.union}
There exists no set of tropical curves such that the following properties are simultaneously satisfied:
\begin{itemize}
  \item Its set of centroids form a stationary point process with positive intensity in $\R^2$.
  \item Its set of common zeros is the support of a point process in $\R^2$.
\end{itemize}
\end{prop}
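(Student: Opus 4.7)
I argue by contradiction. Suppose such a collection exists: centroids form a stationary point process of intensity $\lambda > 0$, and the set of common zeros is a (locally finite) point process. Fix a bounded window $W \subset \R^2$; local finiteness forces $\#(\text{common zeros in } W) < \infty$ almost surely. The plan is to show that with positive probability there are already infinitely many arm-arm intersections inside $W$, yielding the contradiction.

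By Lemma \ref{lem:arms}, every standard tropical curve of positive degree carries at least one horizontal arm (a ray in direction $(-1,0)$ of the form $\{(x', y_a) : x' \leq x_a\}$) and at least one vertical arm (a ray in direction $(0,-1)$ of the form $\{(x'_a, y') : y' \leq y'_a\}$), with apexes $(x_a, y_a)$ and $(x'_a, y'_a)$ lying in the compact body of the curve. Mapping each centroid to the apex of a designated horizontal arm is a translation-equivariant operation on the marked point process, so the collection of horizontal apexes is itself a stationary point process in $\R^2$ with intensity at least $\lambda$; analogously for vertical apexes. The crucial geometric fact is that such a horizontal and such a vertical arm meet at the single point $(x'_a, y_a)$ provided $x'_a \leq x_a$ and $y_a \leq y'_a$, and this point lies in $W$ iff $(x'_a, y_a) \in W$. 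Setting $A_h := [W_x^+, \infty) \times [W_y^-, W_y^+]$ and $A_v := [W_x^-, W_x^+] \times [W_y^+, \infty)$, every horizontal apex in $A_h$ paired with every vertical apex in $A_v$ automatically satisfies the above inequalities, yielding a meeting point inside $W$.

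Both $A_h$ and $A_v$ have infinite Lebesgue area. Invoking the standard fact that a stationary point process of positive intensity assigns infinite mass with positive probability to any Borel set of infinite Lebesgue measure (via ergodic decomposition: the non-trivial ergodic components of the marked process have positive total mass, and each such component has an almost surely infinite count in any set of infinite measure), I obtain an event $E$ of positive probability on which both $\#(\text{horizontal apexes in } A_h) = \infty$ and $\#(\text{vertical apexes in } A_v) = \infty$ hold simultaneously. On $E$, the pairwise meetings described above yield infinitely many common zeros in $W$. These points are almost surely distinct, since in general position distinct vertical apexes have distinct $x$-coordinates and distinct horizontal apexes have distinct $y$-coordinates, and the correction from same-curve pairs is finite (only finitely many curves have bodies meeting $W$, and each has at most finitely many arms). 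This contradicts the almost-sure finiteness of $\#(\text{common zeros in } W)$.

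The main obstacle is the ergodic-decomposition step ensuring simultaneous infinite counts in $A_h$ and $A_v$; this is standard for stationary point processes, but care is needed to verify that the non-trivial ergodic components contribute to both strips at once, which follows from the fact that both apex processes are deterministic functions of the same underlying marked point process. A minor technical issue, when curve bodies can be unbounded, is handled by a preliminary thinning to grains of body radius at most $r_0$ for some $r_0$ with positive probability, preserving stationarity and positive intensity.
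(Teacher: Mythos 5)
Your overall architecture is the same as the paper's (pair apexes of horizontal arms in one half-infinite strip with apexes of vertical arms in another to force infinitely many arm--arm crossings inside a fixed window, contradicting local finiteness), but the probabilistic lemma you rest it on is false as stated. It is \emph{not} true that a stationary point process of positive intensity puts infinitely many points, with positive probability, in every Borel set of infinite Lebesgue measure, and the parenthetical ``each ergodic component has an a.s.\ infinite count in any set of infinite measure'' is false as well. Counterexample: let $\Phi=\Z^2+U$ with $U$ uniform on $[0,1)^2$ (stationary, ergodic, intensity $1$), choose disjoint sets $S_k\subset[0,1)^2$ with $|S_k|=2^{-k}$, finite sets $F_k\subset\Z^2$ with $|F_k|=4^k$, and put $A=\bigcup_k\bigcup_{v\in F_k}(v+S_k)$. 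Then $|A|=\sum_k 4^k2^{-k}=\infty$, while $\Phi(A)=\sum_k 4^k\,\mathbf{1}\{U\in S_k\}$ has at most one nonzero term and is therefore finite surely. So ``infinite measure plus positive intensity'' is not enough; what saves you is that your sets $A_h,A_v$ are half-infinite strips, and for those the correct argument is exactly the one the paper runs in Proposition \ref{prop:no.counting.process}: restrict the apex process to the bi-infinite strip, project onto the strip's axis to get a stationary point process on $\R$, and use the zero-or-infinity dichotomy for stationary point processes on the line (Property 1.1.2 of \cite{baccelli2013elements}) --- if the half-strip count were a.s.\ finite, the whole strip would be a.s.\ empty, and summing over countably many parallel strips contradicts positive intensity. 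Your simultaneity step (both strips infinite at once) is justified by the same flawed ergodic claim; note the paper avoids it by demanding much less of the second strip --- a \emph{single} vertical apex there already meets infinitely many horizontal arms, and the paper argues that event has full probability, so its intersection with the positive-probability event for the first strip is still positive.

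Two further points. First, your distinctness of the crossing points is asserted ``in general position,'' but no general-position hypothesis is available for an arbitrary stationary collection; the paper handles this by a case split: if with positive probability two curves have a degenerate intersection (e.g.\ two horizontal arms at the same height, which is exactly what identical apex ordinates would mean), then the set of common zeros already contains a half-line and cannot be the support of a point process; otherwise apex heights and abscissas are a.s.\ pairwise distinct and the crossing points $p_{mn}$ are distinct. You should incorporate that dichotomy rather than appeal to general position. Second, a cosmetic slip: with the paper's min-plus convention the arms of a standard curve point in the directions $(1,0)$, $(0,1)$ and $(-1,-1)$ (east, north, southwest), not west and south, so your strips $A_h,A_v$ should be reflected (placed as in the paper's regions $(-\infty,0]\times[0,1]$ and $[-1,0]\times(-\infty,0]$); this does not affect the structure of the argument. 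Your preliminary thinning to bounded bodies is unnecessary, since the body of a finite-degree standard curve is compact, but it is harmless.
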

The appendix contains a review of stochastic geometry and proof of the above propositions. 

\subsection{The tropical lines process}
\begin{defn}[Tropical lines process]\label{def:trop.line.process}
Let $\mathcal{F}$ be the atomic measure on the tropical lines centered at the origin, that is, the tropical line
$$ V_f := \{x = 0, y \geq 0\} \cup \{x \geq 0, y = 0\} \cup \{x = y \leq 0 \}$$
corresponds to the polynomial $f$ of degree one
$$ f: \R^2 \to \R, f(x,y) = x \oplus y \oplus 0. $$
The $k$-th order iterated Gilbert model $\mathcal{G}^k(\mathcal{P}, \mathcal{A})$, denoted $\mathcal{G}^k_L$, is called the $k$-th order tropical lines process of intensity $\lambda$. 
\end{defn}

\begin{figure}[h]
\includegraphics[width = 0.8\textwidth]{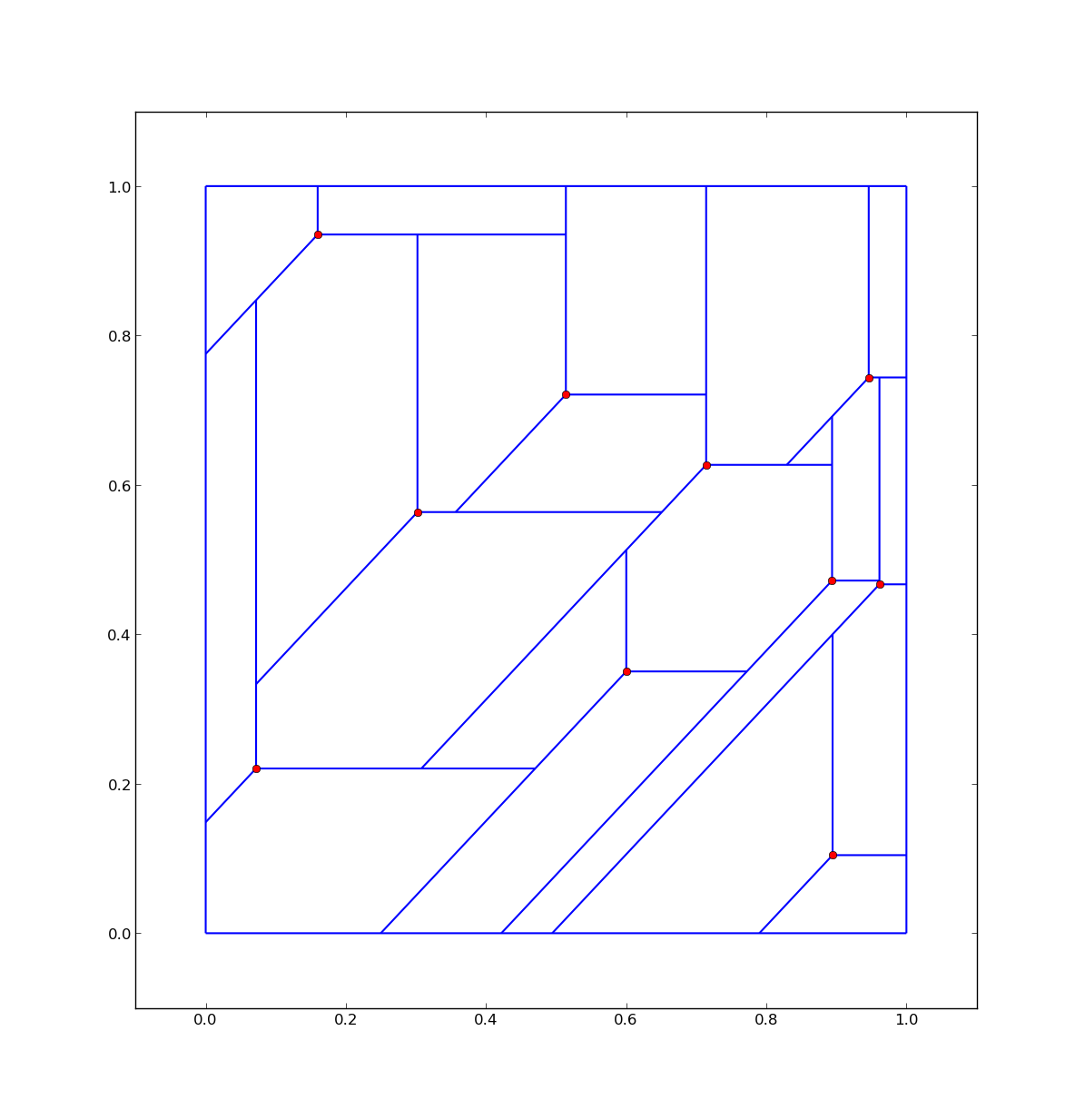}
\caption{A simulation of the first order tropical lines process $\mathcal{G}^1_L$ with $\lambda = 10$, restricted to the square $[0,1]^2$. The red points are the starting positions of the motorcycles, which form a Poisson point process in $\R^2$ with rate $\lambda$. The blue lines are the motorcycles' trails.}
\end{figure}

The tropical lines process can be defined directly as an iterated Gilbert model as follows. Let $m = 3$, $A$ be the set of angles $\{0, \pi/2, 5\pi/4\}$ relative to $(1,0)$, called east, north and southwest directions, respectively. Let the measure $\mathcal{A}_2$ has mass 0, and $\mathcal{A}_3$ has mass 1 on the set $A$. In other words, at each point of the Poisson point process $\mathcal{P}$ with rate $\lambda > 0$, put three motorcycles, one in each direction in $A$. Then $\G^k_L = \G^k(\mathcal{P}, A)$. The exact and asymptotic intensities of the vertice, edge and face processes for the $k$-th order tropical line process $\G^k_L$ follow from Proposition \ref{prop:gk-finite}. 

\begin{cor}\label{cor:intensity}
For $k \geq 1$, let $\lambda_0^k, \lambda_1^k, \lambda_2^k$, be the intensities of the vertices, edges and cells of $\G^k_L$,  respectively. Then
\begin{align*}
\lambda_0^k &= (3k+1)\lambda, \\ 
\lambda_1^k &= 6k\lambda,  \\
\lambda_2^k &= (3k-1)\lambda.
\end{align*}
In particular, the vertex, edge and face intensities of the scaled limit $\G^\infty_L$ are 
$$\lambda_0^\infty = 3\lambda, \hspace{0.5em} \lambda_1^\infty = 2\lambda, \hspace{0.5em} \lambda_2^\infty = 3\lambda.$$
\end{cor}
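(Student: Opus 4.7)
The plan is to specialize the mass-transport derivation of Proposition~\ref{prop:gk-finite} to the tropical lines process and then pass to the scaling limit via Theorem~\ref{thm:main}. By Definition~\ref{def:trop.line.process}, $\G^k_L$ is the iterated Gilbert mosaic with angle set $A=\{0,\pi/2,5\pi/4\}$ and deterministic multiplicity $3$ at every site of $\mathcal P$; every site therefore emits exactly one motorcycle in each of the three directions.

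To obtain the finite-$k$ intensities I would argue as follows. The vertex set of $\G^k_L$ is the disjoint union of the sites of $\mathcal P$ (of intensity $\lambda$, each physical site counted once) and the intersection points of motorcycle trails. Proposition~\ref{lem:racs} guarantees that every motorcycle is a.s.\ killed after undergoing exactly $k$ life-loss events; by general position each such event creates one intersection vertex (the later-arriving motorcycle at the crossing loses one life), and since the motorcycle intensity is $3\lambda$ the intersection intensity equals $3k\lambda$, so $\lambda_0^k=(3k+1)\lambda$. For the edge intensity I would apply the handshake identity: sites contribute $3\lambda$ to the total degree (intensity $\lambda$, degree $3$ each), the single fatal life loss per motorcycle contributes $9\lambda$ (intensity $3\lambda$, degree $3$), and each of the remaining $k-1$ non-fatal life losses per motorcycle contributes $12(k-1)\lambda$ (intensity $3(k-1)\lambda$, degree $4$); dividing the total $3\lambda+9\lambda+12(k-1)\lambda=12k\lambda$ by two gives $\lambda_1^k=6k\lambda$. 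The Euler identity $\lambda_2^k=\lambda_1^k-\lambda_0^k$ of Proposition~\ref{prop:gk-finite} then produces $\lambda_2^k=(3k-1)\lambda$.

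For the limiting intensities I would pass to the rescaled process $\G^k_L(1/k)$ of Theorem~\ref{thm:main}: since each formula above is linear in the ground intensity $\lambda$, replacing $\lambda$ by $\lambda/k$ and letting $k\to\infty$ gives $\lambda_0^\infty=\lim_{k\to\infty}(3k+1)\lambda/k=3\lambda$ and $\lambda_2^\infty=\lim_{k\to\infty}(3k-1)\lambda/k=3\lambda$, with the edge term $\lambda_1^k(1/k)=6\lambda$ stabilizing by the same scaling (and consistent with Euler's relation in the limit).

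The only genuinely delicate point is the very first step of the vertex count, and it is also the main source of possible slips: one must distinguish a physical site of $\mathcal P$ from the three motorcycles it emits, so that sites enter the vertex total with intensity $\lambda$ but enter the degree sum in the handshake identity with weight $3$. Keeping this bookkeeping straight is precisely what makes the three finite-$k$ formulas obey Euler's identity and scale correctly in the $k\to\infty$ limit.
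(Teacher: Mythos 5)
Your argument is essentially the paper's: the corollary is meant to follow from the mass-transport/handshake computation of Proposition~\ref{prop:gk-finite} together with Euler's relation, and you have simply redone that computation with $\E\mathcal{M}=3$, three motorcycles per site, site vertices of degree $3$, one degree-$3$ intersection per final death and $k-1$ degree-$4$ intersections per motorcycle. Two remarks are worth making. First, your bookkeeping (sites entering the vertex count with intensity $\lambda$ but the degree sum with weight $3$) is in fact \emph{not} what a literal substitution into Proposition~\ref{prop:gk-finite} gives: that proposition counts the site vertices with intensity $\lambda\,\E\mathcal{M}$, and plugging $\E\mathcal{M}=3$ into its formulas yields $(3k+3)\lambda$, $(6k+3)\lambda$, $3k\lambda$ rather than the corollary's $(3k+1)\lambda$, $6k\lambda$, $(3k-1)\lambda$. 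So your re-derivation is not a redundant specialization; it is the version whose site-vertex intensity is $\lambda$, and it is the one that reproduces the corollary's stated finite-$k$ values (and satisfies $\lambda_0^k-\lambda_1^k+\lambda_2^k=0$). Second, for the limit you obtain $\lambda_1^\infty=6\lambda$, whereas the corollary states $2\lambda$. Your value is the consistent one: for the limiting Poisson line process every intersection has degree $4$, so $2\lambda_1^\infty=4\lambda_0^\infty$ gives $\lambda_1^\infty=6\lambda$, and only this value satisfies Euler's relation with $\lambda_0^\infty=\lambda_2^\infty=3\lambda$; the stated $2\lambda$ appears to be a slip in the corollary rather than a defect of your argument. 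One small caveat: passing from $\G^k_L(1/k)\to\G^\infty_L$ in probability on compact windows to convergence of vertex/edge/face intensities is not automatic; it is cleaner to compute $\lambda_0^\infty,\lambda_1^\infty,\lambda_2^\infty$ directly from the Poisson-line description of $\G^\infty_L$ given by Theorem~\ref{thm:main.tropical} (as the paper does for the intersection types, which indeed sum to $3\lambda$), which confirms $3\lambda$, $6\lambda$, $3\lambda$.
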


\begin{cor}
Let $p_{- \cap |}$, $p_{- \cap /}$ and $p_{| \cap /}$ denote the intensities of the three types of intersections of $\G^\infty_L$. Then 
\begin{equation}
p_{- \cap |} = \lambda\frac{2\sqrt{2}}{\sqrt{2} + 1} , \hspace{0.5em} p_{| \cap /} = p_{- \cap /} = \lambda\left(\frac{\sqrt{2} + 3}{2}\right)\frac{1}{\sqrt{2} + 1}.
\end{equation}
\end{cor}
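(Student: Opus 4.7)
The plan is to derive the three intersection intensities directly from Theorem \ref{thm:main.tropical} together with the classical formula for pairwise intersection intensities of anisotropic Poisson line processes. Specialising Theorem \ref{thm:main.tropical} to the tropical-lines configuration, which has $D_-=D_|=D_/=1$, the scaled limit $\G^\infty_L$ is a Poisson line process on $\R^2$ whose cylindrical measure is purely atomic, supported on the three line directions at angles $0$, $\pi/2$ and $\pi/4$, with respective linear intensities (in the perpendicular parameterisation) proportional to $\mu_-$, $\mu_|$, $\mu_/$.

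For two independent stationary Poisson line processes on $\R^2$ with distinct directions $\theta_1,\theta_2$ and linear intensities $\alpha_1,\alpha_2$, their pairwise intersection point process has intensity
\[ \alpha_1\,\alpha_2\,|\sin(\theta_1-\theta_2)|. \]
This is classical and follows in one line from Mecke's equation: along a typical line of the first process, lines of the second process cross at rate $\alpha_2\,|\sin(\theta_1-\theta_2)|$ per unit length, while the mean total length of the first process per unit area equals $\alpha_1$. Applying this to each of the three unordered pairs among $\{0,\pi/2,\pi/4\}$ yields
\[ p_{-\cap|}=\alpha_-\alpha_|,\qquad p_{-\cap/}=\tfrac{1}{\sqrt{2}}\,\alpha_-\alpha_/,\qquad p_{|\cap/}=\tfrac{1}{\sqrt{2}}\,\alpha_|\alpha_/. \]

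What remains is algebra: plug in the symmetry $\mu_-=\mu_|$ and the relation $\mu_/=\tfrac{\sqrt{2}+3}{4}\mu_-$, together with the identity $\mu_-^2=\tfrac{2\sqrt{2}}{1+\sqrt{2}}$ obtained by evaluating the defining equation of Lemma \ref{lem:constants} at the tropical-line angle configuration (this is exactly the computation that produces the constants in Theorem \ref{thm:main.tropical}). After simplification one recovers the two claimed formulae, and a consistency check is that the three intensities sum to $\lambda(3\sqrt{2}+3)/(\sqrt{2}+1)=3\lambda$, matching the vertex intensity $\lambda_0^\infty=3\lambda$ of Corollary \ref{cor:intensity}. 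The argument is short and presents no substantive obstacle; the heart of the work was done in Theorem \ref{thm:main.tropical}, after which the statement reduces to the bilinear intersection formula and one clean algebraic simplification.
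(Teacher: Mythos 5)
Your proposal is correct and follows essentially the same route as the paper: both split $\G^\infty_L$ into the three independent directional Poisson line subprocesses and compute each pairwise intersection intensity as the product of linear intensities times the geometric factor ($1$ for the perpendicular pair, $\tfrac{1}{\sqrt 2}$ for the pairs at $45^\circ$), then substitute the constants $\mu_-=\mu_|$, $\mu_/$ from Theorem \ref{thm:main.tropical}. The only difference is cosmetic: you justify the factor via the general $|\sin(\theta_1-\theta_2)|$ formula (Mecke), whereas the paper obtains it by choosing convenient observation windows (a unit square and the parallelogram spanned by $(1,0)$ and $(-1,-1)$).
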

Note that the intensity of intersections, $\lambda^\infty_0$, is
$$ \lambda^\infty_0 = \lambda\left(\frac{\sqrt{2} + 3}{\sqrt{2} + 1} + \frac{2\sqrt{2}}{\sqrt{2} + 1}\right) = 3\lambda, $$
which agrees with Corollary \ref{cor:intensity}.
\begin{proof}
By Theorem \ref{thm:main.tropical}, $\G^\infty_L$ is the Poisson line process consisting of three types of lines with directions $(1,0)$, $(0,1)$ and $\frac{1}{\sqrt{2}}(-1,-1)$, with intensities $\mu_-, \mu_|$ and $\mu_/$. We can fix different observation windows $W$ to calculate the intensity for each type of intersection. Let $\lambda = 1$. For $- \cap |$, let $W$ be a square of sidelength 1. Thus,
$$p_{- \cap |} = \mu_- \mu_| = \frac{2\sqrt{2}}{\sqrt{2} + 1}.$$ 
For $- \cap /$, let $W$ be the parallelogram formed by the vectors $(1,0)$ and $(-1,-1)$. There are $\mu_-$ many horizontal lines, and $\frac{1}{\sqrt{2}}\mu_/$ many diagonal lines crossing $W$. Thus 
$$ p_{- \cap /} = \frac{1}{\sqrt{2}}\mu_- \mu_/ = \left(\frac{\sqrt{2} + 3}{2}\right)\frac{1}{\sqrt{2} + 1}. $$
The intensity of $| \cap /$ equals that of $- \cap /$ by symmetry. 
\end{proof}

Now consider the faces of $\G^k_L$.
The edge directions of each face are one of the three directions of $A$. One can check that each face is an ordinary and tropically convex set, that is, it is a polytrope \cite{joswig2010tropical}. Classifying polytropes by their combinatorial type is an interesting problem \cite{joswig2010tropical,tran2013enumerating}. Let us compute the intensities of various polytropes by their combinatorial types in $\G^k_L$. 

Polytropes in $\R^2$ have $3, 4, 5$ or $6$ proper vertices. For $i = 0, \ldots, 3$, let $p^k_i$ denote the intensity of cells of $\G^k_L$ with $3+i$ vertices. Then 
\begin{equation}\label{eqn:facetype1}
p^k_0 + p^k_1 + p^k_2 + p^k_3 = \lambda_2^k = (3k-1)\lambda.
\end{equation}
Now, each point in $\Psi$ is the proper vertex of three faces, each intersection of order $3$ in $\G^k_L$ is the proper vertex of two faces, and each intersection of order $4$ in $\G^k_L$ is the proper vertex of four faces. Thus,
\begin{equation}\label{eqn:facetype2}
3p^k_0 + 4p^k_1 + 5p^k_2 + 6p^k_3 = 3\lambda + 2 \cdot 3\lambda + 4 \cdot 3(k-1) \lambda = (12k-3)\lambda.
\end{equation}

For $p_i$ the intensity of cells of $\G^\infty_L$ with $3+i$ vertices for $i = 0, 1, 2, 3$, we have
\begin{align}
p_0 + p_1 + p_2 + p_3 &= 3\lambda \label{eqn:facetype1.infty} \\
3p_0 + 4p_1 + 5p_2 + 6p_3 &= 12\lambda, \label{eqn:facetype2.infty}
\end{align}
While (\ref{eqn:facetype1.infty}) and \ref{eqn:facetype2.infty}) do not uniquely determine the intensities $p_i$, we can compute these numbers directly from the Poissonian description of $\G^\infty_L$.

\begin{lem}
Let $p_i$ be the intensity of cells of $\G^\infty_L$ with $3+i$ vertices for $i = 0, 1, 2, 3$, we have
\begin{align*}
p_3 &\approx 0.429367312053161, & p_4 &\approx 2.22221756362048, \\
p_5 &\approx 0.267462936599565, \hspace{1em} \mbox{ and }  & p_6 &\approx 0.0809521877267980.
\end{align*}
\end{lem}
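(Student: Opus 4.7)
The plan is to compute each of $p_3, p_4, p_5, p_6$ directly from the Poissonian description of $\G^\infty_L$ via the standard size-biasing relation between the typical cell $C_0$ and the Crofton cell $C^*$ (the a.s.\ unique cell of $\G^\infty_L$ containing the origin): for any nonnegative cell functional $f$,
$$\E[f(C^*)] \;=\; \lambda_2^\infty\, \E_0[f(C_0)\operatorname{Area}(C_0)].$$
Choosing $f(C) = \mathbbm{1}\{\operatorname{type}(C)=\tau\}/\operatorname{Area}(C)$ for a combinatorial cell type $\tau$ yields the key identity
$$p_\tau \;=\; \E\!\left[\frac{\mathbbm{1}\{\operatorname{type}(C^*)=\tau\}}{\operatorname{Area}(C^*)}\right],$$
and summing over $\tau$ while using $\E[1/\operatorname{Area}(C^*)] = \lambda_2^\infty$ recovers $\sum_\tau p_\tau = 3\lambda$ from~(\ref{eqn:facetype1.infty}), providing a first consistency check.

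Next I would parametrize $C^*$. Since $\G^\infty_L$ has lines in only three directions and $C^*$ is convex, the boundary of $C^*$ is contained in the six lines closest to the origin---one on each side of each direction. Let $V^\pm \sim \operatorname{Exp}(\mu_|)$, $H^\pm \sim \operatorname{Exp}(\mu_-)$, $D^\pm \sim \operatorname{Exp}(\mu_/)$ be the corresponding perpendicular distances; by the independence of the three line sub-processes these six variables are mutually independent. Then $C^*$ is the intersection of the three strips $[-V^-, V^+]\times \R$, $\R\times [-H^-, H^+]$ and $\{(x,y): -\sqrt 2\, D^- \leq y-x \leq \sqrt 2\, D^+\}$, and hence has combinatorial type $(n_-, n_|, n_/)$ with $n_i \in \{0,1,2\}$ counting its edges of direction $i$. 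Enumeration of the bounded configurations gives eleven sub-types: one triangle $(1,1,1)$; six quadrilaterals, comprising three ``truncated-triangle'' types such as $(2,1,1)$ and three ``parallelogram'' types such as $(0,2,2)$ (obtained when one strip is wide enough to impose no constraint); three pentagons such as $(2,2,1)$; and one hexagon $(2,2,2)$. Each sub-type corresponds to an explicit region in $(0,\infty)^6$ cut out by linear inequalities among the six variables, and on each such region $\operatorname{Area}(C^*)$ is a piecewise-polynomial function of $(V^\pm, H^\pm, D^\pm)$.

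Combining these ingredients, each $p_\tau$ reduces to the six-fold integral
$$p_\tau = \mu_-^2\mu_|^2\mu_/^2 \int_{(0,\infty)^6} \frac{\mathbbm{1}_\tau}{\operatorname{Area}(C^*)}\, e^{-\mu_|(V^+ + V^-) - \mu_-(H^+ + H^-) - \mu_/(D^+ + D^-)}\, dV^+ dV^- dH^+ dH^- dD^+ dD^-,$$
with $p_4$ (resp.\ $p_5$) the sum of six (resp.\ three) such contributions, and $p_3, p_6$ each arising from a single sub-type. Substituting $\mu_- = \mu_| = 2^{3/4}/\sqrt{1+\sqrt 2}$ and $\mu_/ = (\sqrt 2 + 3)\mu_-/4$ from Theorem~\ref{thm:main.tropical} applied to the line setting and evaluating numerically yields the reported decimals; (\ref{eqn:facetype1.infty}) and (\ref{eqn:facetype2.infty}) serve as further consistency checks. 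The main obstacle is the explicit evaluation of these integrals: the factor $1/\operatorname{Area}(C^*)$ is singular on the codimension-one boundaries of the sub-type regions where $C^*$ degenerates, and while a radial change of variables in the directions along which the area vanishes shows the singularity is integrable, the integrals do not appear to admit clean closed forms. Hence the decimals are obtained by numerical quadrature; the triangular, parallelogram, and hexagonal sub-types, where $\operatorname{Area}(C^*)$ takes a particularly simple polynomial form with no conditional sub-cases, are the most tractable analytically and can be used to cross-check the numerical output.
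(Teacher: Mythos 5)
Your proposal is correct in substance, but it takes a genuinely different route from the paper. The paper exploits the Manhattan (horizontal--vertical) sub-grid of $\G^\infty_L$: every cell sits inside exactly one rectangle of that grid, whose sides are i.i.d.\ exponentials with mean $1/\mu_-$; conditioning on the Poisson numbers $N_a,N_b,N_c$ of diagonal lines crossing three sub-regions of the rectangle, it tabulates how many triangles, quadrilaterals, pentagons and hexagons each configuration produces, and so reduces each $p_i$ to a two-dimensional integral $\int e_i(x,y)e^{-\mu_-(x+y)}\,dx\,dy$ with a smooth integrand (obtained by summing Poisson series), evaluated numerically. You instead invoke the size-bias relation between the typical cell and the zero cell, $p_\tau=\E\bigl[\mathbbm{1}\{\mathrm{type}(C^*)=\tau\}/\mathrm{Area}(C^*)\bigr]$, and describe $C^*$ exactly as the intersection of three strips determined by six independent exponential perpendicular distances; this is valid (with only three directions the zero cell is indeed bounded and has at most two edges per direction, giving your eleven types, and $\E[1/\mathrm{Area}(C^*)]=\lambda_2^\infty<\infty$ guarantees integrability of the singular weight), and your identification of the line intensities $\mu_-,\mu_|,\mu_/$ and of the strip parametrization is consistent with Theorem~\ref{thm:main.tropical}. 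What each approach buys: yours is more systematic and portable --- it works verbatim for any Poisson line process with finitely many directions and comes with built-in consistency checks such as $\sum_\tau p_\tau=\lambda_2^\infty$ --- whereas the paper's trick of conditioning on the Manhattan rectangle keeps the numerics two-dimensional and purely combinatorial, avoiding both the $1/\mathrm{Area}$ singularity and the eleven-case decomposition of a six-fold integral, at the price of being tailored to this particular three-direction geometry. Either route, carried out carefully, yields the stated decimals.
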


\begin{proof}
Fix a rectangle of width $x$, height $y$, with $x > y$. Let $N_a$, $N_b$ and $N_c$ be the number of diagonal lines crossing regions $A, B, C$ in Figure \ref{fig:abc} below. Note that these are independent Poisson random variables, with means $a = \min(x,y)/\sqrt{2}$, $b = |y-x|/\sqrt(2)$ and $c = a$, respectively. Conditioned on the values of $N_a$, $N_b$ and $N_c$, we can count the number of triangles, quadilaterals, pentagons and hexagons generated. See Table \ref{tab:abc}. 

\begin{figure}[h]
\includegraphics{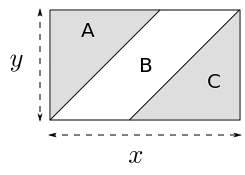}
\caption{An $x \times y$ rectangle with $x > y$ is divided into three regions labelled $A,B$ and $C$ by two diagonal lines. Conditioned on the number of diagonal lines of $\G^\infty_L$ hitting each region, one can compute the intensities of faces of $\G^\infty_L$ by vertices.}
\label{fig:abc}
\end{figure}

\begin{table}[h]
\begin{tabular}{| c | c | c | c | c | c | c |}
\hline
$N_a$ & $N_b$ & $N_c$ & $\triangle$ & $\square$ & \pentagon & \varhexagon \\
\hline
0 & 0 & 0 & 0 & 1 & 0 & 0 \\
1 & 0 & 0 & 1 & 0 & 1 & 0 \\
0 & 0 & 1 & 1 & 0 & 1 & 0 \\
$\geq 1$ & 0 & $\geq 1$ & 2 & $N_a + N_c - 2$ & 0 & 1 \\
0 & $\geq 1$ & 0 & 0 & $N_b + 1$ & 0 & 0 \\
1 & $\geq 1$ & 0 & 1 & $N_b$ & 0 & 0 \\
0 & $\geq 1$ & 1 & 1 & $N_b$ & 0 & 0 \\
$\geq 1$ & $\geq 1$ & $\geq 1$ & 2 & $N_a + N_b + N_c - 3$ & 2 & 0 \\
\hline
\end{tabular}
\vskip12pt
\caption{Number of polytropes of each type generated by a single rectangle, conditioned on the number of diagonal lines intersecting it.} \label{tab:abc}
\end{table}

Let $i$ be an index variable taking values in $\{3,4,5,6\}$.
Using the previous table, for each fixed rectangle of size $x \times y$, we can compute $e_i(x,y)$, the expected number of polytropes with $i$ vertices. Now fix a large square $W$ of side length $s$. Consider the Manhattan line process with horizontal and vertical intensities $\mu_-$. There are $s^2 \mu_-^2 + o(s^2)$ many rectangles in $W$. The side lengths of the rectangles are distributed as i.i.d. exponential with mean $1/\mu_-$. Then 
$$ s^2\int_{x,y} e_i(x,y) e^{-\mu_-(x+y)}\, dx \, dy + o(s^2) $$
is the expected number of polytropes with $i$ vertices in $W$. Thus, as $s \to \infty$, the density of polytropes with $i$ vertices is precisely
$$ p_i = \int_{x,y} e_i(x,y) e^{-\mu_-(x+y)}\, dx \, dy$$
for $i \in \{3,4,5,6\}$. Numerically evaluate these integrals yield the result. 
\end{proof}

\section{Discussions} \label{sec:discussions}

Our work leaves a number of open questions for both algebraic and stochastic geometers.
To be concrete, we list a few such problems:

\begin{enumerate}
  \item Is there a distributional limit for the error term 
$d\left(\G^k\left(\frac{1}{k}\right) \cap W , \G^\infty\cap W\right)$,
where $d$ is some distance between sets?
For a classical Poisson line process,
the fluctuations of such statistics around the limit
are Gaussian. In this case, we have extra fluctuations
from the Gilbert iterations. We suspect that the fluctuations
are still Gaussian, but with higher variance.
\item What happens when the number of angles $A$ is not finite?
For example, what is the limit of the iterated classical Gilbert
tessellation with uniform angle distribution on $[0,\pi]$?
\item What happens when the underlying point process $\mathcal P$
is stationary but not Poisson? In particular, for what types
of point processes beyon Poisson are the dilation scaling results obtained
here still valid?
\item What happens to systems of tropical polynomials in higher variables? The intersections would now be segments of hyperplanes of various codimensions. These processes will also be dense, and thus one needs a way to enumerate the common zeros. However, it is not clear what is the effective analogue of the iterated Gilbert tessellation in higher dimensions.
  \item Is the Poisson tropical plane curve process the tropicalization of some processes of classical varieties? Tropicalization is often studied in the field of Puiseux series, or the $p$-adics. There has been work by Evans \cite{evans2006expected} on systems of polynomials whose coefficients are $p$-adic Gaussians. However, their tropicalizations would result in discretely distributed coefficients, and thus the current result does not directly apply.
\end{enumerate}

\bibliographystyle{plain}
\bibliography{iteratedGilbert}

\appendix
\section*{Appendix}
In this section, we first review some basics of stochastic geometry, give the precise definitions of the terms and the proof of these propositions. 

\subsubsection*{Background}
We first review some terminologies in stochastic geometry, for reference see \cite{schneider2008stochastic} and \cite{daley2007introduction}. For a set $\{\cdot\}$, let $\text{Card}\{\cdot\}$ denote its cardinality. For $V\subset \R^2$ and $x \in \R^2$, let $x + V$ be the translated set $\{x + y: y \in V\}$. Let $\mathcal{C}$ denote the set of closed sets in $\R^2$. 
Let $\mathcal{C}^1 \subset \mathcal{C}$ be the set of tropical curves with finite and positive degrees. Note that when we write $V_f \in \mathcal{C}^1$, we view the tropical curve $V_f$ as a closed set in $\R^2$. In contrast, when we write $V_f \in \mathcal{V}$, we view it as a compact set with marked points. For notational convenience, we will suppress the dependence on $f$ and write $V_n$ for $V_{f_n}$. 
Let $\mathcal{B}$ denote the Borel $\sigma$-algebra of the Fell topology on $\mathcal{C}$. 
A point process on $\mathcal{C}^1$ is a counting measure of the form
\begin{equation}
\label{eq:phi}
\Phi= \Phi(\omega)= \sum_{n \in \mathbb{N}} \delta_{V_n},
\end{equation}
that satisfies the following $\sigma$-finiteness (or Radon) condition: for all compact sets $K \subset \R^2$,
\begin{equation}
\label{eq:choquet}
\mbox{Card}\{n \mbox{ s.t. } V_n\cap K \ne \emptyset\}<\infty, \quad
\P  \mbox{-a.s.}.
\end{equation}
Here $V_n$ are random variables taking values in $\mathcal{C}^1$, that is, they are random tropical curves.
The intensity measure $\Lambda$ of $\Phi$ is the measure on $\mathcal{B}$ given by
$$ \Lambda(A) = \E(\Phi(A)) \mbox{ for } A \in \mathcal{B}. $$
Say that $\Phi$ is a \term{non-degenerate point process of tropical plane curves} if $\Lambda$ is supported on $\mathcal{C}^1$. Say that $\Phi$ is \term{stationary} if its distribution is invariant under actions by the group of translations of $\R^2$. Say that $\Phi$ has \term{positive intensity} if
for all compact sets $K$ of $\R^2$ with positive Lebesgue measure,
$$ \E\left(\mbox{Card} \{n \mbox{ s.t. } V_n\cap K \ne \emptyset\}\right) >0.$$
If $\Phi$ satisfies the last three properties, that is, it is a non-degenerate, stationary point process of tropical plane curves with positive intensity, then we say that $\Phi$ is a \emph{stationary point process of tropical plane curves}. 

For a centroid function $c$ on tropical curves, one can associate with $\Phi$ the point process of centroids
\begin{equation}\label{eqn:psi}
\Psi = \sum_{n \in \mathbb{N}} \delta_{C_n}, 
\end{equation}
where each $C_n$ is a random variable in $\R^2$, $C_n(\omega) = c(V_n(\omega))$, the centroid of the tropical plane curve $V_n(\omega)$. 
We are now ready to prove Proposition \ref{prop:no.counting.process}. 

\begin{proof}[Proof of Proposition \ref{prop:no.counting.process}.]
The proof is by contradiction. Assume there exists
$$ \Phi= \Phi(\omega)= \sum_n \delta_{V_n},$$
which is Radon and stationary in the sense defined above. Consider the map $c: \mathcal{C}^1 \to \R^2$, where $c(V_n)$ is the apex of the horizontal arm of $V_n$ whose apex has minimum $y$-coordinate. This function is well-defined, measurable and translation invariant, therefore it is a centroid function on $\mathcal{C}^1$. Let $\Psi$ be the centroid point process of $\Phi$ with centroid function $c$. Since $\Phi$ is stationary and has positive intensity, and each curve in $\Phi$ yields precisely one point in $\Psi$, $\Psi$ is a stationary point process in $\R^2$ with positive intensity.

We now claim that
\begin{equation}
\label{eq:infty}
\P ( \Psi ((-\infty,0]\times [0,1]) = \infty)>0.
\end{equation}
The projection on the abscissa axis
of the restriction of $\Psi$ to the set $(-\infty,\infty)\times [0,1])$
forms a stationary point process on $\R$.
If (\ref{eq:infty}) is not true, then it follows from the last observation 
and from Property 1.1.2  in \cite{baccelli2013elements} that
$$\P ( \Psi ((-\infty,\infty)\times [0,1]) = 0)=1,$$
which in turns implies that
$$\P ( \Psi ((-\infty,\infty)\times [n,n+1]) = 0)=1,$$
for all $n\in \Z$, so that
$$\P ( \Psi (\R^2)=0)=1,$$
and this contradicts the fact that the intensity of $\Psi$ is positive. So (\ref{eq:infty}) must holds.

Now, each tropical curve with centroid in $(-\infty, 0] \times [0, 1])$ has at least one arm extending in the $(1,0)$ direction starting from this centroid point. Therefore, this arm intersects the $[(0,0), (0,1)]$ segment of $\R^2$. Hence, with positive probability, there is an infinite number of different horizontal arms of tropical lines of $\Phi$ that intersect $[(0,0), (0,1)]$. This contradicts (\ref{eq:choquet}). 
\end{proof}

\subsubsection*{The second view}
Alternatively, one could start with a stationary point process $\Psi$ in $\R^2$ as given in (\ref{eqn:psi}), and to each point $C_n$ attach a random tropical plane curve, translated to have $C_n$ as the centroid. One could then consider the set 
\begin{eqnarray}
\label{eqn:union}
\Xi = \Xi(\omega)= \bigcup_n V_n(\omega),
\end{eqnarray}
where $V_n(\omega)$ has centroid $C_n(\omega)$. If it exists, $\Xi$ would be a random variable taking values in $\mathcal{C}$. 
As the following example shows, it is possible to have $\Xi$
to be a well defined stationary random closed set, whereas as we know from the last lemma that the associated process $\Phi = \sum_n \delta_{V_n}$ is not a point process on $\mathcal{C}$.\\

\noindent
\begin{ex}\label{ex1} 
Choose the centroid function $c$ as in the proof of Proposition \ref{prop:no.counting.process}. The centroid of a tropical line of the form $a \odot x \oplus b \odot y \oplus c$ is just the only point on its body, which is $(c-a,c-b)$. In (\ref{eqn:union}), take
$n=(k,l)$ varying over $\Z^2$, and for
$T_{k,l}$ with $k,l\in \Z$, take the tropical line
with centroid $(k,l)+U$, where $U$ is a random variable
which is uniformly distributed in the cube $[0,1]\times[0,1]$.
The associated $\Xi$ is depicted in Figure \ref{fig1}.

\begin{figure}[h]
 \begin{center}
  \includegraphics[width=4cm]{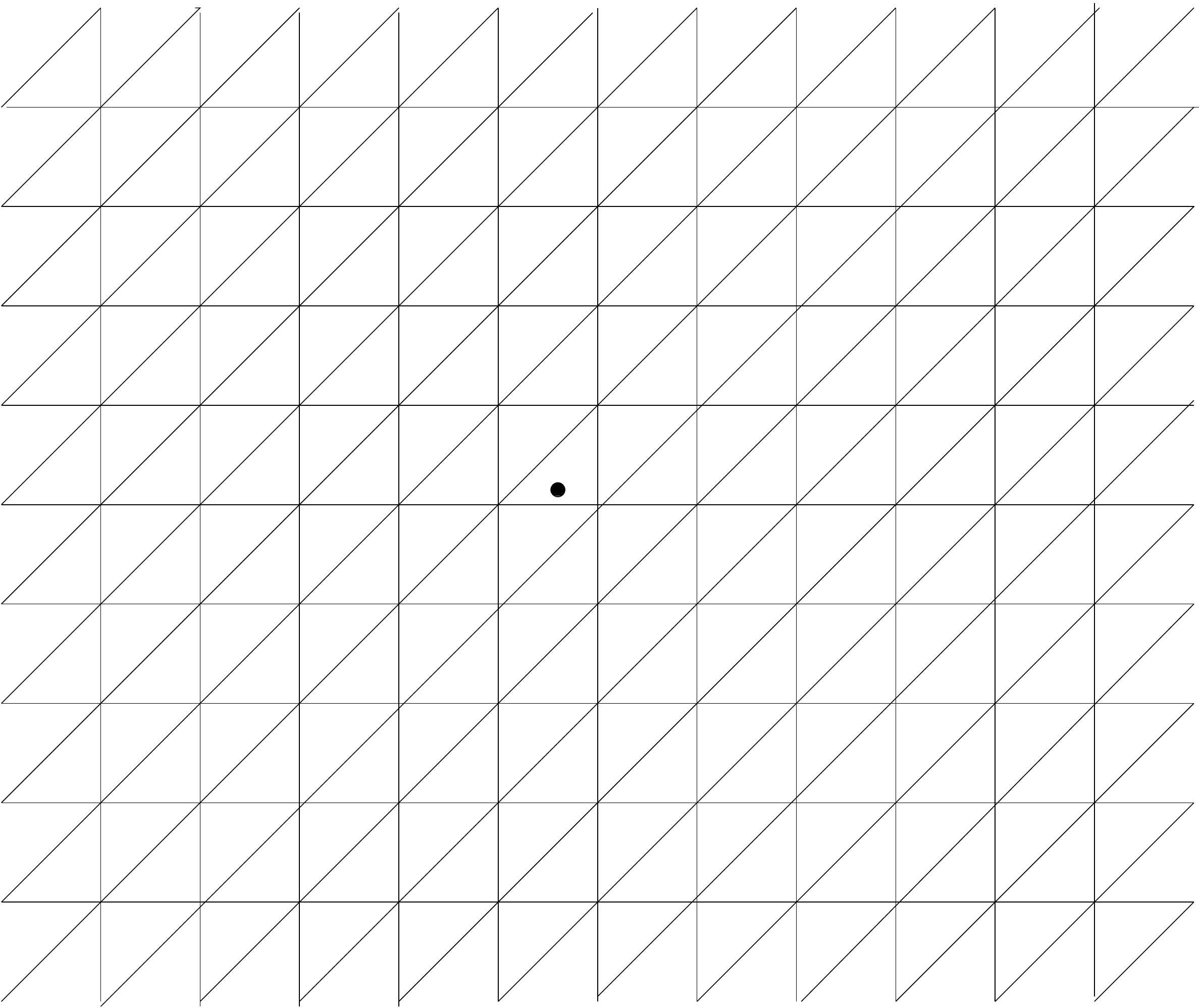}
 \end{center}
 \caption{An instance of $\Xi$}
\label{fig1}
\end{figure}

There is no contradiction with Proposition \ref{prop:no.counting.process}.
A direct evaluation shows that
the centroid point process $\Psi$ has intensity 1, and that
with probability 1,
the number of tropical lines that intersect the 
$[(0,0),(0,1)]$ segment of $\R^2$ is infinite.
However, all vertical (resp. horizontal or diagonal)
arms that intersect this segment do so at the same point.
This is of course directly linked to this specificity of this
example where
each tropical line has a degenerate intersection with 
an infinite number of other tropical lines of $\Xi$. 
\end{ex}

To rule out pathological cases as that in Example \ref{ex1}, we shall consider tropical curves with non-degenerate intersections. Say that two tropical curves $V, V' $ have non-degenerate intersections, if $V \cap V'$ is a set of finitely many points in $\R^2$. In algebraic terms, this means the corresponding system of two polynomials in two variables is not singular. For a collection $\{V\}$ of tropical curves, the union of their pairwise intersections is called the \term{set of common zeros of $\{V\}$}.
If $\{V\}$ is a $\P$-a.s. non-degenerate collection of tropical curves, the set of common zeros is a point process in $\R^2$. We call this the \term{intersection process} of $\{V\}$, denoted by $\mathcal{I}$. 

In Example \ref{ex1}, the set of common zeros of the collection of lines of $\Xi$  is the random closed set $\Xi$ itself, which is not the support of a point process on $\R^2$, as it contains lines. Unfortunately, as claimed in Proposition \ref{prop:no.union}, we cannot define $\Xi$ such that both the associated centroid process and the intersection process are stationary with positive intensities. 

\begin{proof}[Proof of Proposition \ref{prop:no.union}]
Consider the maps $c_1, c_2: \mathcal{C}^1 \to \R^2$, where $c_1(V)$ is the apex of the horizontal arm of $T$ with minimum $y$-coordinate, and $c_2(V)$ is the apex of the vertical arm of $T$ with minimum $x$-coordinate. As argued in the proof of Proposition \ref{prop:no.counting.process}, both of these are centroid functions of tropical curves. 

Consider the set $\Xi$ defined in (\ref{eqn:union}) via some stationary, positive intensity centroid point process $\Psi$. Let $\Psi_1$ be the centroid process of $\Xi$ associated with the function $c_1$, $\Psi_2$ be the centroid process of $\Xi$ associated with the function $c_2$. There is a bijection between points of $\Psi$, $\Psi_1$ and $\Psi_2$ via the tropical curves $T_n$. Since centroid functions are translation invariant, and since $\Psi$ is a stationary point process with positive intensity, it follows that $\Psi_1$ and $\Psi_2$ are also.

Now, either 
$$\P(\exists m\ne n \mbox{ s.t. } V_m  \mbox{ and } V_n  \mbox{ have  degenerate intersection})>0$$
in which case the set of common zeros contains a half line with
positive probability, and the result follows.
Or
$$\P(\exists m\ne n \mbox{ s.t. } V_m  \mbox{ and } V_n  \mbox{ have  
degenerate intersection})=0.$$
Consider this second case. Let 
$$\tau_1 = \{V_n: c_1(V_n) \in (-\infty,0] \times [0,1]\},$$ 
and
$$ \tau_2 = \{V_n: c_2(V_n) \in [-1,0]\times (-\infty,0]\}. $$
Apply the argument in Proposition \ref{prop:no.counting.process} to $\Psi_1$, we get
\begin{equation}\label{eqn:tau1}
\P(\mbox{Card}(\tau_1) = \infty) > 0. 
\end{equation}
Since $\Psi_2$ has positive intensity, 
\begin{equation}\label{eqn:tau2}
\P(\mbox{Card}(\tau_2) > 0) = 1.
\end{equation}

Each $V_n = V_{f_n}$ is dual to the subdivision of the triangle with vertices $(0,0), (0,d), (d,0)$, where $d$ is the degree of $f_n$. Therefore, by definition of $c_1$ and $c_2$,
the $y$-coordinate of $c_1(V_n)$ is at most the $y$-coordinate of $c_2(V_n)$, and the $x$-coordinate of $c_2(V_n)$ is at most the $x$-coordinate of $c_1(V_n)$. Therefore, it is not possible for a tropical curve to be in both $\tau_1$ and $\tau_2$. 

Let 
$$ I = \bigcup_{V_n \in \tau_1, V_m \in \tau_2} V_n \cap V_m $$
be the subset of the set of common zeros which are intersections of pairs of tropical lines in $\tau_1$ and $\tau_2$. Since such intersections are a.s. non-degenerate, $I$ must be a collection of points in $[-1,0] \times [0,1]$. By definition of $c_1$, for each point in $c_1$, there is a tropical curve $V_n$ with a horizontal arm ($(1,0)$ direction) starting from this point. Similarly, for each point in $c_2$, there is a tropical curve $V_m$ with a vertical arm ($(0,1)$ direction) starting from this point. Thus, each set $V_n \cap T_m$ necessarily contains a point $p_{mn} \in [-1,0] \times [0,1]$ that is the intersection of this horizontal and vertical arms.

By the non-degenerate intersection assumption, $\P$-a.s. there are no two tropical plane curves with identical $c_1$-centroids or $c_2$-centroids. Therefore,$p_{mn} \neq p_{m'n'}$ whenever either $m \neq m'$ or $n \neq n'$. That is, all the intersections from different pairs are different. 

Thus, by (\ref{eqn:tau1}) and (\ref{eqn:tau2}), 
$$ \P(\mbox{Card}(I) = \infty) > 0. $$

Hence, the set of common zeros of $\Xi$ cannot be Radon in this case too.
\end{proof}

\end{document}